\patchcmd{\thebibliography}{\section*{\refname}}{}{}{}
\newtheorem{Theorem}{Theorem}[section]
\newtheorem{Conjecture}[Theorem]{Conjecture}
\newtheorem{Proposition}[Theorem]{Proposition}
\newtheorem{Corollary}[Theorem]{Corollary}
\newtheorem{Lemma}[Theorem]{Lemma}
\theoremstyle{remark}
\newtheorem{ex}{Example}
\newcommand{\field}[1]{\mathbb{#1}}
\newcommand{\N}{\field{N}}
\newcommand{\K}{\textbf{K}}
\numberwithin{equation}{section}
\title{New companions to the Andrews--Gordon identities motivated by commutative algebra}
\author{Pooneh Afsharijoo}
\address{Universit\'e Paris Cit\'e, Sorbonne Universit\'e, CNRS, Institut de
Math\'ematiques de Jussieu-Paris Rive Gauche, F-75013 Paris, France}
\email{pooneh.afshari@gmail.com}
\author{Jehanne Dousse}
\address{Université de Genève, Section de Mathématiques, 7-9 rue du Conseil-Général, CH-1205 Genève, Switzerland}
\email{jehanne.dousse@unige.ch}
\author{Fr\'ed\'eric Jouhet}
\address{Univ Lyon, Université Claude Bernard Lyon 1, UMR5208, Institut Camille Jordan, F-69622 Villeurbanne, France}
\email{jouhet@math.univ-lyon1.fr}
\author{Hussein Mourtada}
\address{Universit\'e  Paris Cit\'e, Sorbonne Universit\'e, CNRS, Institut de
Math\'ematiques de Jussieu-Paris Rive Gauche, F-75013 Paris, France}
\email{hussein.mourtada@imj-prg.fr}
\keywords{integer partitions, Andrews--Gordon identities, $q$-series, Durfee dissections, monomial ideals, graded rings}
\subjclass[2020]{05A17, 05A30, 11B65, 11P81, 11P84, 12H05, 13A02, 46M18}
\begin{document}

\begin{abstract}
We give a proof of a recent conjectural partition identity due to the first author, which was discovered in the framework of commutative algebra. This result gives rise to new companions to the famous Andrews--Gordon identities. Our tools involve graded quotient rings, Durfee squares and rectangles for integer partitions, and $q$-series identities.
\end{abstract}

\maketitle

\section{Introduction}

A partition of a positive integer $n$ is a non-increasing sequence of positive integers $\lambda=(\lambda_1,\lambda_2,\dots,\lambda_s)$  such that $\lambda_1+\lambda_2+\cdots+\lambda_s=n$. The integers $\lambda_i$ are called the parts of $\lambda$ and $s$ is its length. For example, the partitions of $4$ are $(4)$, $(3,1)$, $(2,2)$, $(2,1,1)$, and $(1,1,1,1)$.

An important research direction in the theory of partitions, which goes back at least to Euler, is the study of partition identities.  Such an identity states that for every positive integer $n$, the numbers of partitions of $n$ satisfying two different types of constraints are equal. When the generating functions (with respect to $n$ and the usual variable $q$) of the two corresponding sets of partitions are considered, such an identity becomes equivalent to an equality of $q$-series. On the other hand, some $q$-series identities were proved before discovering the corresponding interpretation in terms of partitions. The most famous such instances are the Rogers--Ramanujan identities, which are the following formal power series identities:
\begin{eqnarray}    
\sum_{k=0}^\infty\frac{q^{k^2}}{(1-q)\cdots(1-q^k)}&=&\prod_{n\geq0}\frac{1}{(1-q^{5n+1})(1-q^{5n+4})},\label{rr1}\\
\sum_{k=0}^\infty\frac{q^{k^2+k}}{(1-q)\cdots(1-q^k)}&=&\prod_{n\geq0}\frac{1}{(1-q^{5n+2})(1-q^{5n+3})},\label{rr2}
\end{eqnarray}
where empty products obtained with $k=0$ on the left-hand sides are taken to be $1$. The Rogers--Ramanujan identities are among the most fascinating and deep, as they are connected to combinatorics, statistical mechanics, number theory, representation theory, and algebraic geometry (see for instance~\cite{A2, Bax, BIS, BMS1, GIS, GOW,MM}). After their discovery, they were interpreted combinatorially by MacMahon \cite{MacMahon} and Schur \cite{Schur}, giving rise to the following partition identities.
\begin{Theorem}[Rogers--Ramanujan identities, combinatorial version]
\label{th:RRcomb}
Let $n$ be a nonnegative integer and set $i\in\{1,2\}$. Denote by $T_{2,i}(n)$ the number of partitions of $n$ such that the difference between consecutive parts is at least $2$ and the part $1$ appears at most $i-1$ times. Let $E_{2,i}(n)$ be the number of partitions of $n$ into parts congruent to $\pm (2+i) \mod 5$. Then we have
$$T_{2,i}(n)=E_{2,i}(n).$$
\end{Theorem}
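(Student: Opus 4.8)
The plan is to reduce the combinatorial statement to the analytic identities \eqref{rr1} and \eqref{rr2} recorded above, by computing the generating functions of the two families of partitions and comparing them. I would start with the easy, product side. A partition counted by $E_{2,i}(n)$ uses only parts lying in the two residue classes $2+i$ and $3-i$ modulo $5$ (note that $3-i\equiv -(2+i)\bmod 5$), and each admissible part may be repeated arbitrarily. Encoding a part equal to $m$ by a factor $q^m$ and using the independence of the parts, the generating function factors as a product of geometric series:
\begin{equation*}
\sum_{n\geq 0} E_{2,i}(n)\, q^n = \prod_{n\geq 0} \frac{1}{(1-q^{5n+2+i})(1-q^{5n+3-i})},
\end{equation*}
which is exactly the right-hand side of \eqref{rr2} when $i=1$ and of \eqref{rr1} when $i=2$.

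The substantive step is the gap side, where I would establish that
\begin{equation*}
\sum_{n\geq 0} T_{2,i}(n)\, q^n = \sum_{k\geq 0} \frac{q^{k^2+(2-i)k}}{(1-q)\cdots(1-q^k)},
\end{equation*}
matching the left-hand side of \eqref{rr1} for $i=2$ and of \eqref{rr2} for $i=1$. The idea is to sort the partitions counted by $T_{2,i}(n)$ according to their number of parts $k$. A partition $\lambda_1>\lambda_2>\cdots>\lambda_k$ with all gaps $\lambda_j-\lambda_{j+1}\geq 2$ and with $1$ occurring at most $i-1$ times dominates, term by term, the minimal admissible partition with $k$ parts: the odd staircase $(2k-1,2k-3,\dots,3,1)$ of weight $k^2$ when $i=2$, and the even staircase $(2k,2k-2,\dots,4,2)$ of weight $k^2+k$ when $i=1$ (the restriction on the part $1$ forces the smallest part to be at least $3-i$). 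Subtracting this staircase from $\lambda$ produces $\mu_j=\lambda_j-(\text{staircase})_j$ with $\mu_j-\mu_{j+1}=(\lambda_j-\lambda_{j+1})-2\geq 0$, i.e. an arbitrary partition into at most $k$ parts, whose generating function is $1/((1-q)\cdots(1-q^k))$. Restoring the staircase weight $k^2+(2-i)k$ and summing over $k$ yields the claimed series.

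Combining the two computations with the identities \eqref{rr1} and \eqref{rr2} equates $\sum_n T_{2,i}(n)q^n$ with $\sum_n E_{2,i}(n)q^n$ for each $i\in\{1,2\}$, and comparing coefficients of $q^n$ gives $T_{2,i}(n)=E_{2,i}(n)$. I expect the only delicate point to be the bookkeeping on the gap side: one must verify that ``subtract the staircase'' is genuinely a weight-shifting bijection between the gap-$2$ partitions with exactly $k$ parts (satisfying the prescribed behaviour at the part $1$) and ordinary partitions with at most $k$ parts, and in particular that the constraint encoded by $i$ corresponds precisely to the choice of the odd versus the even staircase. The remaining manipulations are routine geometric-series expansions, so no genuine obstacle arises beyond this indexing check.
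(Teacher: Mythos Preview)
Your argument is correct and is exactly the classical derivation: compute the generating function for $E_{2,i}$ as an infinite product, compute the generating function for $T_{2,i}$ via the staircase-subtraction bijection, and invoke the analytic identities \eqref{rr1}--\eqref{rr2}. The paper does not supply its own proof of Theorem~\ref{th:RRcomb}; it is stated as background and attributed to MacMahon and Schur, so there is nothing to compare against beyond noting that your approach is the standard one they are credited with.
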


A famous family of partition identities, which generalizes the Rogers--Ramanujan identities and plays a central role in this article, is due to Gordon~\cite{Go}.

\begin{Theorem}[Gordon's identities]\label{th:Gordon}
Let $r$ and $i$ be integers such that $r\geq 2$ and $1\leq i \leq r$. Let $\mathcal{T}_{r,i}$ be the set of partitions $\lambda=(\lambda_1,\lambda_2,\dots, \lambda_s)$ where $\lambda_{j}-\lambda_{j+r-1} \geq 2$ for all $j$, and at most $i-1$ of the parts $\lambda_j$ are equal to $1$. Let $\mathcal{E}_{r,i}$ be the set of partitions whose parts are not congruent to $0,\pm i \mod (2r+1)$. Let $n$ be a nonnegative integer, and let $T_{r,i}(n)$ (respectively $E_{r,i}(n)$) denote the number of partitions of $n$ which belong to $\mathcal{T}_{r,i}$ (respectively $\mathcal{E}_{r,i}$). Then we have 
$$T_{r,i}(n)=E_{r,i}(n).$$
\end{Theorem}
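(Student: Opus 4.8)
The plan is to separate the statement into an \emph{analytic} core and a \emph{combinatorial} encoding, proving $T_{r,i}(n)=E_{r,i}(n)$ by showing that both counting functions have the same generating function in $q$. Writing $(q)_n=(1-q)(1-q^2)\cdots(1-q^n)$, I would first reduce everything to the Andrews--Gordon multisum
\begin{equation*}
\sum_{n_1,\dots,n_{r-1}\ge 0}\frac{q^{N_1^2+\cdots+N_{r-1}^2+N_i+N_{i+1}+\cdots+N_{r-1}}}{(q)_{n_1}\cdots(q)_{n_{r-1}}},\qquad N_j=n_j+n_{j+1}+\cdots+n_{r-1},
\end{equation*}
and prove the two equalities: (i) the generating function $\sum_n T_{r,i}(n)q^n$ of the gap-conditioned partitions equals this multisum, and (ii) the multisum equals $\prod_{n\not\equiv 0,\pm i\,(2r+1)}(1-q^n)^{-1}=\sum_n E_{r,i}(n)q^n$.

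For step (i) I would give the standard combinatorial decomposition of a partition $\lambda\in\mathcal{T}_{r,i}$ via a chain of $r-1$ successive Durfee-type squares/rectangles: the exponents $N_1^2+\cdots+N_{r-1}^2$ record the sizes of nested self-conjugate cores peeled off by the difference-$2$ condition $\lambda_j-\lambda_{j+r-1}\ge2$, the factors $(q)_{n_t}^{-1}$ generate the parts lying in the $t$-th horizontal strip, and the linear term $N_i+\cdots+N_{r-1}$ encodes the restriction that at most $i-1$ parts equal $1$. Making this bijection precise, and checking that it matches each monomial weight term by term, is routine but must be done carefully so that the index $i$ in the gap condition lands on the correct linear shift.

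Step (ii) is the analytic heart, and I would treat it by $q$-difference equations. Introduce the two-variable refinement $G_{r,i}(x)=G_{r,i}(x;q)$ generating the partitions of $\mathcal{T}_{r,i}$ with $x$ tracking the number of parts (or the largest part). By splitting on whether the number of parts equal to $1$ is exactly $i-1$ or strictly smaller, and subtracting $1$ from every part in the former case, one obtains a functional equation of the shape $G_{r,i}(x)-G_{r,i-1}(x)=(xq)^{\,i-1}G_{r,r-i+1}(xq)$ together with the boundary relation $G_{r,0}(x)=0$; the precise constants are fixed by the bookkeeping above. On the product side I would build a bilateral series $J_{r,i}(x)$ out of the factor $1/(q)_\infty$ times a theta-type sum $\sum_{j\in\Z}(-1)^j x^{rj}q^{(2r+1)\binom{j}{2}+(r-i+1)j}$, verify that it satisfies the \emph{same} $q$-difference equation with the same boundary data, and conclude by a uniqueness argument in the ring of formal power series that $G_{r,i}(x)=J_{r,i}(x)$. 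Specializing $x=1$ and applying the Jacobi triple product identity collapses the theta sum into $\prod_n(1-q^{(2r+1)n+i})(1-q^{(2r+1)n+(2r+1)-i})(1-q^{(2r+1)(n+1)})$, which combines with $1/(q)_\infty$ to leave exactly the factors whose exponents are \emph{not} congruent to $0,\pm i\pmod{2r+1}$, namely $\sum_n E_{r,i}(n)q^n$.

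The main obstacle will be the uniqueness step: showing that a normalized solution of the $q$-difference system in $x$ is determined by its boundary values, so that matching $G_{r,i}$ and $J_{r,i}$ term by term in powers of $x$ and $q$ is legitimate. This requires checking that each coefficient of $x^m$ is a well-defined power series in $q$ and that the recursion lowers a suitable degree, guaranteeing convergence of the fixed-point iteration. A slicker alternative, which I would keep in reserve, is to bypass the explicit $q$-difference equations by iterating the Bailey chain $r-1$ times starting from the unit Bailey pair and applying Bailey's lemma, after which the product side emerges directly from the limiting Jacobi triple product; the cost there is setting up the Bailey-pair machinery cleanly, whereas the functional-equation route keeps the combinatorial meaning of every term transparent.
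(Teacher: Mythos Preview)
The paper does not give its own proof of Gordon's identities: Theorem~\ref{th:Gordon} is stated with a citation to Gordon~\cite{Go} and used as classical background. So there is no in-paper argument to compare against; what can be said is how your outline measures up to the standard approaches the paper itself invokes (Andrews~\cite{A74,A79}, Bressoud, and the Bailey machinery of Section~5).

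Your main route---a two-variable refinement $G_{r,i}(x)$, a $q$-difference system, and a uniqueness argument matching it to a theta-type series $J_{r,i}(x)$ that collapses via the Jacobi triple product at $x=1$---is exactly Andrews' 1974 proof, and it works. Two caveats. First, the recursion you write, $G_{r,i}(x)-G_{r,i-1}(x)=(xq)^{i-1}G_{r,r-i+1}(xq)$, is not the correct shape: the combinatorial bookkeeping (remove the $i-1$ copies of $1$, subtract $1$ from each remaining part) produces a \emph{difference} on the right, namely $(xq)^{i-1}\bigl(G_{r,r-i+1}(xq)-G_{r,r-i}(xq)\bigr)$, and the uniqueness argument is sensitive to this. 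Second, your step~(i) is both unnecessary and misstated. Unnecessary because the $q$-difference route already equates the $\mathcal{T}_{r,i}$ generating function directly with the product, bypassing the multisum. Misstated because the ``Durfee-type squares/rectangles'' interpretation of the Andrews--Gordon multisum (Andrews~\cite{A79}, and recalled in Section~\ref{sec:outline} of the present paper) yields the set $\mathcal{A}_{r,i}$, which is \emph{not} the same set as $\mathcal{T}_{r,i}$; no simple Durfee bijection between them is known in general.

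Your reserve plan---iterate the Bailey chain $r-1$ times from the unit Bailey pair---has a genuine gap, and the paper addresses exactly this point in Section~\ref{sec:Bailey}: iterating Bailey's lemma with the unit pair gives only the extremal cases $i=r$ (with $a=1$) and $i=1$ (with $a=q$). For intermediate $i$ one needs either the Bailey \emph{lattice} of~\cite{AAB}, where the parameter $a$ is shifted mid-chain, or one of the workarounds in~\cite{ASW,BIS}. Simply running the chain will not produce the linear shift $N_i+\cdots+N_{r-1}$ for general $i$.
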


The Rogers--Ramanujan identities~\eqref{rr1} and~\eqref{rr2} correspond to the cases $r=i=2$ and $r=i+1=2$ in Theorem \ref{th:Gordon}, respectively.

Our main goal in this article is to prove new companions to the Gordon identities; in particular, we will consider a new set of partitions that we will call $\mathcal{C}_{r,i}$ and prove that for all nonnegative integers $n$, the number $C_{r,i}(n)$ of partitions of  $n$ belonging to  $\mathcal{C}_{r,i}$ is equal to $T_{r,i}(n)$ and $E_{r,i}(n)$.
 This settles positively a conjecture made by the first author in~\cite{A}. 

Before introducing the set $\mathcal{C}_{r,i}$, we begin by explaining the origin of this conjecture, which also allows us to give an algebro-geometric setting that will be useful in some parts of the proof (even though we also give a purely combinatorial proof of these parts). For that, consider the ring of polynomials 
$$\mathcal{R}=\K[x_i,i\geq 1]$$
with countably many variables over a field $\K$ of characteristic $0$. We give $\mathcal{R}$ a structure of graded ring 
by assigning to $x_i$ the weight $i$; this means that $\mathcal{R}=\oplus_{n\geq 0}R_n$ where $R_0=\K$ and $R_n$ is the $\K$-vector space with a basis given by the monomials $x_{i_1}\cdots x_{i_s}$ (we can assume that $i_1\geq i_2\cdots\geq i_s>0$) such that $i_1+\cdots+i_s=n$. 

In particular, there is a trivial bijection between monomials of weight $n$ and partitions of $n$: the monomial $x_{i_1}\cdots x_{i_s}$ of weight $n$ is in bijection with the partition $\lambda=(i_1,\ldots,i_s)$ of $n$. \textit{In the remainder of this paper, we always denote by $x_{\lambda}$ the monomial associated to the partition $\lambda$.}

 Therefore the Hilbert--Poincar\'e series $HP_{\mathcal{R}}$ of $\mathcal{R}$ is given by 
$$HP_{\mathcal{R}}(q):=\sum_{n\geq 0}\mbox{dim}_\K R_nq^n=\sum_{n\geq 0} p(n)q^n,$$
where $p(n)$ is the number of partitions of $n.$

Equivalently, these partitions can be interpreted as functions on the space of arcs centered at the origin of the affine line  $\textbf{A}_\K^1=\mathrm{Spec}\K[t]$  (here $t$ is a formal variable) defined over the field $\K$. Recall that the space of arcs of a variety is the moduli space which parametrizes formal curves traced on this variety. In other words, the space of arcs of a $\K$-variety $X$ is the scheme whose $\K$-points are in bijection with the morphisms $\mathrm{Spec}\K[[t]]\longrightarrow X$. It follows from~\cite{BMS1,BMS2} that functions on the space of arcs centered at fat points are in correspondence with the partitions in $\mathcal{T}_{r,i}$. More precisely, for any integer $r\geq2$, the ring of global sections of the space of arcs centered at a fat point $\mathrm{Spec}\K[t]/(t^r)$ is the quotient ring
$\mathcal{R}/[x_1^r]$, where 
$[x_1^r]$ is the differential ideal generated by $x_1^r$ and its iterated derivative with respect to the derivation $D$ defined by $D(x_j):=x_{j+1}$.



Thus we have
$$[x_1^r]=(x_1^r,rx_1^{r-1}x_2,r(r-1)x_1^{r-2}x_2^2+rx_1^{r-1}x_3,\ldots).$$
It follows from~\cite{BMS1} that for integers $1\leq i\leq r-1$, the leading ideal of $\mathcal{J}_{r,i}:=(x_1^i,[x_1^r])$ with respect to  the ``weighted reverse lexicographical order'', that is  the ideal generated by the leading monomials of all the elements in $\mathcal{J}_{r,i}$, is
$$J_{r,i}=(x_1^i,x_k^{r-s}x_{k+1}^s;k\geq 1; s=0,\ldots,r-1).$$ 

It is well-known that the Hilbert--Poincar\'e series of a graded ring quotiented by an ideal $I$ is equal to the Hilbert--Poincar\'e series of the ring quotiented by the leading ideal of $I$ with respect to any monomial ordering which is compatible with the grading. It is important here to mention that the leading ideal is in general not generated by the leading monomials of a system of generators; but a system of generators of an ideal $I$ such that the leading monomials of its members generate the leading ideal of $I$ is called a Gr\"obner basis. In particular, we have $HP_{\mathcal{R}/\mathcal{J}_{r,i}}(q)=HP_{\mathcal{R}/J_{r,i}}(q)$. But a quick examination of   
$$\frac{\mathcal{R}}{J_{r,i}}$$ 
is sufficient to see that its monomials (these are the monomials in $\mathcal{R}$ which do not belong to $J_{r,i}$) correspond exactly to the partitions in $\mathcal{T}_{r,i}$.   
 Hence we have 

$$HP_{\mathcal{R}/\mathcal{J}_{r,i}}(q)=HP_{\mathcal{R}/J_{r,i}}(q)=1+\sum_{n\geq 1}T_{r,i}(n)q^n.$$

In~\cite{A}, the first author tried to compute the leading ideal of $\mathcal{J}_{r,i}$ with respect to the weighted lexicographical order and she predicted that it is equal to the ideal $I_{r,i}\subset \textbf{K}[x_1,x_2,\ldots]$ generated by $x_1^i$ and the monomials of the following form:

$$\underbrace{x_{n_{1,1}}}_{\text {first block}} \underbrace{ x_{n_{2,1}}\cdots x_{n_{2,f_{r,i}(2)}}}_{\text {second block}} \underbrace{ x_{n_{3,1}} \cdots x_{n_{3,f_{r,i}(3)}}}_{\text{third block}} \cdots \underbrace{ x_{n_{r,1}}\cdots x_{n_{r,f_{r,i}(r)}}}_{\text{$r$-th block}};$$

\noindent where 

$$f_{r,i}(j):= \begin{cases}
1 &\text{ if } j=1, \\
n_{j-1,f_{r,i}(j-1)} &\text{ if } 2\leq j \leq i, \\
n_{j-1,f_{r,i}(j-1)}-1 &\text{ if } i+1 \leq j \leq r.\\
\end{cases}$$

We do not know how to prove that this is actually the leading ideal because it involves the computation of a Gr\"obner basis of $\mathcal{J}_{r,i}$ with respect to the weighted lexicographical order, which is out of reach for the moment. Such a Gr\"obner basis is of course infinite but also does not seem to have finiteness properties even for $r=2$: for instance it is not differentially finite (which means that it cannot be generated by a finite number of elements in $\mathcal{R}$ and by their iterated derivatives)~\cite{AM}, contrary to the case where one considers the weighted reverse lexicographical order~\cite{BMS1,BMS2}. This led the first author to introduce the above-mentioned set $\mathcal{C}_{r,i}$ of partitions which correspond to the monomials in the quotient ring $\mathcal{R}/I_{r,i}$ and that we are now ready to describe.

Given an integer $r\geq 2$, we define  for $1 \leq i \leq r$ the $(i,\ell)$-new part of $\lambda=(\lambda_1,\dots,\lambda_s)$ as follows:

$$p_{i,\ell}(\lambda):=\begin{cases}
\lambda_s &\text{ if } \ell=1, \\
\lambda_{s-\sum_{j=1}^{\ell-1}p_{i,j}(\lambda)} &\text{ if }  2\leq \ell \leq i, \\
\lambda_{s+\ell-i-\sum_{j=1}^{\ell-1}p_{i,j}(\lambda)} &\text{ if } i <\ell \leq r-1,
\end{cases}$$ 
where $\lambda_j=0$ for $j\leq 0,$ and if $p_{i,\ell}(\lambda)=0$ then $p_{i,j}(\lambda)=0$ for $j> \ell$. We denote the number of all non-zero $(i,\ell)$-new parts of $\lambda$ by $N_{r,i}(\lambda)$. 
In~\cite{A}, the first author conjectured the following.

\begin{Conjecture}\label{conj:Pooneh_original}
Let $r \geq 2$ and $1 \leq i \leq r$ be two integers. Let  $\mathcal{C}_{r,i}$ be the set of partitions of the form $\lambda=(\lambda_1,\dots,\lambda_s),$ such that at most $i-1$ of the parts are equal to $1$ and either $N_{r,i}(\lambda)<r-1$, or $N_{r,i}(\lambda)=r-1$ and $s\leq \sum_{j=1}^{r-1} p_{i,j}(\lambda)-(r-i)$. Let $n$ be a nonnegative integer, and denote by $C_{r,i}(n)$ the number of partitions of $n$ which belong to $\mathcal{C}_{r,i}$. Then we have
$$C_{r,i}(n)=T_{r,i}(n)=E_{r,i}(n).$$
\end{Conjecture}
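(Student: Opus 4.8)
The plan is to pass to generating functions and read the one attached to $\mathcal{C}_{r,i}$ off a dissection of the Young diagram into successive Durfee squares and rectangles. Since Gordon's identities (Theorem~\ref{th:Gordon}) already give $T_{r,i}(n)=E_{r,i}(n)$, it suffices to prove that
$$\sum_{n\geq 0} C_{r,i}(n)\,q^n \;=\; \prod_{\substack{m\geq 1\\ m\not\equiv 0,\pm i\pmod{2r+1}}}\frac{1}{1-q^m}\;=\;\sum_{n\geq 0}E_{r,i}(n)\,q^n .$$
Throughout I write $(q)_j:=\prod_{t=1}^{j}(1-q^t)$, and I identify the resulting multisum with this infinite product by means of a $q$-series identity.

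\textbf{Decoding the new parts.} First I would make the recursion explicit. Writing $p_{i,\ell}(\lambda)=\lambda_{a_\ell}$, the three cases give $a_\ell-a_{\ell+1}=p_{i,\ell}(\lambda)$ for $\ell\leq i-1$ and $a_\ell-a_{\ell+1}=p_{i,\ell}(\lambda)-1$ for $\ell\geq i$. Hence, scanning $\lambda$ upward from its smallest part, the $\ell$-th step peels off a block of consecutive parts all of size at least $p_{i,\ell}(\lambda)$: a $p_{i,\ell}(\lambda)\times p_{i,\ell}(\lambda)$ Durfee square when $\ell\leq i-1$, and a Durfee rectangle with one fewer row when $\ell\geq i$. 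Consequently $N_{r,i}(\lambda)$ is the number of such blocks, the new parts $p_{i,1}(\lambda)\leq\cdots\leq p_{i,N_{r,i}(\lambda)}(\lambda)$ are their weakly increasing sides, and a telescoping computation gives $a_r=s-\sum_{j=1}^{r-1}p_{i,j}(\lambda)+(r-i)$. Thus the two clauses defining $\mathcal{C}_{r,i}$ say precisely that the dissection uses at most $r-1$ blocks, and that when it uses exactly $r-1$ the inequality $a_r\leq 0$ holds, i.e. these blocks already exhaust every row of $\lambda$. As a check, for $r=i=2$ this recovers $\mathcal{C}_{2,2}=\{\lambda:\lambda_s\geq s\}$, whose generating function is the first Rogers--Ramanujan sum $\sum_{s\geq 0}q^{s^2}/(q)_s$.

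\textbf{The generating function and the $q$-series step.} Summing over the number $k\leq r-1$ of blocks, their sides $m_1\leq\cdots\leq m_k$, and the partitions filling the region to the right of each block, standard Durfee-dissection bookkeeping turns $\sum_n C_{r,i}(n)q^n$ into an explicit multisum whose $q$-powers record the block areas $\sum_\ell m_\ell^2-\sum_{\ell\geq i}m_\ell$ (the subtraction coming from the missing rows of the rectangles) and whose denominators are products of $q$-factorials governed by the successive differences $m_{\ell+1}-m_\ell$. The crucial point is that here the \emph{rectangular} blocks are the outermost (largest) ones, whereas in the classical Durfee dissection of Gordon's partitions $\mathcal{T}_{r,i}$ the rectangles are the innermost (smallest) blocks; it is exactly this reflection that makes $\mathcal{C}_{r,i}$ a genuine \emph{companion} to $\mathcal{T}_{r,i}$, and that prevents the two multisums from being termwise equal. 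I would then prove that this new multisum nevertheless equals the infinite product above, either by transforming it into the classical Andrews--Gordon multisum or by feeding it into a Bailey-type chain.

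\textbf{Conclusion and main obstacle.} Combining the product evaluation with Gordon's Theorem~\ref{th:Gordon} yields $C_{r,i}(n)=E_{r,i}(n)=T_{r,i}(n)$ for every $n$. The main obstacle is precisely the $q$-series step: because the square/rectangle pattern is reflected relative to the classical dissection, the generating function of $\mathcal{C}_{r,i}$ is a new multisum, and showing that it collapses to the Andrews--Gordon product is not a formal manipulation. A secondary difficulty, needed to set up that multisum correctly, is the careful verification of the combinatorial dictionary of the second step---in particular that the boundary index $\ell=i$ (where the base is still read by the ``square'' formula while the block already loses a row) is handled consistently, that the terminal condition $s\leq\sum_{j=1}^{r-1}p_{i,j}(\lambda)-(r-i)$ is exactly $a_r\leq 0$, and that the constraint of at most $i-1$ parts equal to $1$ is compatible with the rectangular blocks. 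A purely combinatorial alternative would be to construct a weight-preserving bijection between $\mathcal{C}_{r,i}$ and $\mathcal{T}_{r,i}$ directly, but I expect the generating-function route to be the more tractable one.
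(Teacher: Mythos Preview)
Your overall strategy—reinterpret the $(i,\ell)$-new parts as an upward Durfee-type dissection, write down the generating function, then match it to the Andrews--Gordon product via a Bailey-type identity—is exactly the paper's. Your reading of the recursion is also correct: the partitions in $\mathcal{C}_{r,i}$ are precisely those whose \emph{bottom dissection} (with $i-1$ bottom squares followed by $r-i$ bottom rectangles) exhausts all rows. This is the paper's set $\mathcal{B}_{r,i}$.

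However, there is a real gap at the step you treat as ``standard Durfee-dissection bookkeeping.'' The bottom dissection does \emph{not} directly yield a multisum whose exponent is $\sum_\ell m_\ell^2-\sum_{\ell\geq i}m_\ell$ with clean $q$-binomial denominators. The reason is that in a bottom dissection the side of each block is dictated by a specific part of $\lambda$, not by a maximality condition, so the region above the last block is not naturally a partition fitting in a rectangle; one must also sum over the number $k$ of nonempty blocks and over a further parameter governing the top block. The resulting generating function (the paper's formula~\eqref{eq:B_qserie}) carries an inner sum of the shape $\sum_{m=1}^{m_k}q^{mm_k}/(q)_{m-1}$ and is not visibly amenable to the Bailey machinery. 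The paper's key combinatorial step is a nontrivial lemma (Theorem~\ref{th:Durfee_Bottom}) asserting that $\mathcal{B}_{r,i}=\mathcal{D}_{r,i}$, where $\mathcal{D}_{r,i}$ is defined by a \emph{top-down} dissection with $r-i$ horizontal Durfee rectangles followed by $i-1$ Durfee squares. Only after this equality is established does one obtain the clean multisum~\eqref{eq:D_qserie} (the one you wrote), and only then does the Bailey lattice (via Bressoud's identity~\eqref{Br3.3}) apply to produce the product side. So the missing idea is precisely this passage from the bottom dissection to an equivalent top-down Durfee dissection; the authors remark that they could not see how to equate the two generating functions analytically, and prove the set equality combinatorially instead.
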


Our main result is a proof of this conjecture.

\begin{Theorem}\label{th:conjtrue}
Conjecture~\ref{conj:Pooneh_original} is true.
\end{Theorem}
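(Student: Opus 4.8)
The plan is to reduce everything to generating functions. Since the Gordon identities (Theorem~\ref{th:Gordon}) already provide $T_{r,i}(n)=E_{r,i}(n)$ for all $n$, it suffices to prove that $C_{r,i}(n)=T_{r,i}(n)$, equivalently that the generating function of $\mathcal{C}_{r,i}$ coincides with the analytic side of Theorem~\ref{th:Gordon}. I would proceed combinatorially rather than through the algebra: although the monomials of $\mathcal{R}/I_{r,i}$ biject with $\mathcal{C}_{r,i}$, we cannot use $HP_{\mathcal{R}/I_{r,i}}=HP_{\mathcal{R}/\mathcal{J}_{r,i}}$ since $I_{r,i}$ is only conjecturally the leading ideal. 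Concretely, writing $(q)_m:=\prod_{k=1}^m(1-q^k)$, the target is to establish
\[
\sum_{n\ge 0}C_{r,i}(n)\,q^n=\sum_{N_1\ge N_2\ge\cdots\ge N_{r-1}\ge 0}\frac{q^{N_1^2+\cdots+N_{r-1}^2+N_i+N_{i+1}+\cdots+N_{r-1}}}{(q)_{N_1-N_2}\cdots(q)_{N_{r-2}-N_{r-1}}(q)_{N_{r-1}}},
\]
whose right-hand side is the classical Andrews--Gordon multisum; it equals $\prod_{\substack{m\ge 1\\ m\not\equiv 0,\pm i\,(\mathrm{mod}\,2r+1)}}(1-q^m)^{-1}=\sum_{n\ge0}E_{r,i}(n)q^n$, the linear term $N_i+\cdots+N_{r-1}$ being empty when $i=r$.

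The heart of the argument is a combinatorial reading of the quantities $p_{i,\ell}(\lambda)$ and $N_{r,i}(\lambda)$ as a \emph{Durfee dissection}. Scanning $\lambda=(\lambda_1,\dots,\lambda_s)$ from its smallest parts upward, the recursion for $p_{i,\ell}$ peels off successive blocks of rows: for $\ell\le i$ the step $p_{i,\ell}(\lambda)=\lambda_{s-\sum_{j<\ell}p_{i,j}(\lambda)}$ advances by exactly the number of rows already consumed, which is precisely the mechanism extracting a successive \emph{Durfee square}, while for $i<\ell\le r-1$ the shifted step $p_{i,\ell}(\lambda)=\lambda_{s+\ell-i-\sum_{j<\ell}p_{i,j}(\lambda)}$ advances by one row less at each stage, corresponding to extracting a successive \emph{Durfee rectangle} of sides differing by one. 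In this reading $N_{r,i}(\lambda)$ is the depth of the dissection, i.e. the number of squares/rectangles one can successively remove. I would then prove that $\lambda\in\mathcal{C}_{r,i}$ precisely when this dissection has depth at most $r-1$: the case ``$N_{r,i}(\lambda)<r-1$'' is the one where $\lambda$ is too short to support $r-1$ blocks, whereas ``$N_{r,i}(\lambda)=r-1$ and $s\le\sum_{j=1}^{r-1}p_{i,j}(\lambda)-(r-i)$'' is the boundary condition forcing the $r$-th block to be empty, the shift $-(r-i)$ recording the $r-i$ rectangle steps.

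Granting this dictionary, the generating function of $\mathcal{C}_{r,i}$ is assembled by the standard successive-Durfee technique: a partition of depth at most $r-1$ is determined by the sizes $N_1\ge\cdots\ge N_{r-1}\ge 0$ of its successive squares (the first $i-1$ layers) and rectangles (the remaining layers), together with the partitions filling the regions to the right of and below each block. Each square of size $N_j$ contributes $q^{N_j^2}$, each rectangle contributes the extra linear factor recorded by $N_i+\cdots+N_{r-1}$, the complementary regions contribute the denominators $(q)_{N_j-N_{j+1}}$ and $(q)_{N_{r-1}}$, and the defining constraint that at most $i-1$ parts equal $1$ is reflected in the same bookkeeping as the corresponding constraint in the Andrews--Gordon series. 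Summing over all admissible tuples yields the multisum displayed above; invoking the Andrews--Gordon identity together with Theorem~\ref{th:Gordon} then gives $C_{r,i}(n)=T_{r,i}(n)=E_{r,i}(n)$, proving Conjecture~\ref{conj:Pooneh_original}.

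The main obstacle is the second step: turning the opaque recursion for $p_{i,\ell}(\lambda)$ into a genuine, invertible Durfee dissection. The delicate points are the transition between the square regime ($\ell\le i$) and the rectangle regime ($\ell>i$), the correct handling of the index shift $s+\ell-i$, and above all a careful proof that the boundary inequality $s\le\sum_{j=1}^{r-1}p_{i,j}(\lambda)-(r-i)$ is equivalent to the nonexistence of an $r$-th block, since this is what pins membership in $\mathcal{C}_{r,i}$ down exactly. I expect the cleanest route to be an explicit weight-preserving bijection between $\mathcal{C}_{r,i}$ and the tuples $(N_1\ge\cdots\ge N_{r-1};\,\mu^{(1)},\dots,\mu^{(r-1)})$ indexing the multisum, with invertibility checked layer by layer; a residual $q$-series manipulation (a Durfee-rectangle or Bailey-type rearrangement) may be needed to reconcile the exponents produced by the dissection with the standard form $N_1^2+\cdots+N_{r-1}^2+N_i+\cdots+N_{r-1}$.
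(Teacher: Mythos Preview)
Your reading of the recursion for $p_{i,\ell}(\lambda)$ as a dissection built from the smallest parts upward is exactly right, and coincides with what the paper calls the $(i-1)$-\emph{bottom dissection} (its set $\mathcal{B}_{r,i}$, which equals $\mathcal{C}_{r,i}$ by definition). The boundary inequality $s\le\sum_j p_{i,j}(\lambda)-(r-i)$ really is the condition that nothing lies above the last block. The gap is in the next step: this bottom-up dissection is \emph{not} a Durfee dissection, and its generating function is \emph{not} the Andrews--Gordon multisum. A bottom square has side equal to the smallest remaining part, whereas a Durfee square is the largest square fitting at the top; the ``standard successive-Durfee technique'' that produces the products of $q$-binomials only applies to the latter. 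If you compute the generating function of $\mathcal{B}_{r,i}$ directly you get the awkward expression~(2.1)--(2.2) of the paper, with an extra inner sum over $m$, not the clean sum $\sum q^{N_1^2+\cdots+N_{r-1}^2+N_i+\cdots+N_{r-1}}/\prod(q)_{N_j-N_{j+1}}$. Moreover, Andrews' set $\mathcal{A}_{r,i}$ (whose generating function \emph{is} the AG multisum) has the squares at the \emph{top} and \emph{vertical} rectangles at the bottom, with the further constraint that each rectangle's last row is an actual part; your dissection has squares at the bottom and horizontal rectangles at the top, with no such constraint. These are genuinely different sets of partitions with different generating functions, so the bijection you propose between $\mathcal{C}_{r,i}$ and the AG tuples is exactly what the authors were unable to find for general $i$ (they list it as an open problem in Section~\ref{sec:final}, and only handle $i\in\{1,r-1,r\}$ bijectively in Section~\ref{sec:bij}).

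The paper's actual route inserts an intermediate set $\mathcal{D}_{r,i}$: a \emph{top-down} Durfee dissection, but with $r-i$ \emph{horizontal} rectangles first and then $i-1$ squares. They prove combinatorially that $\mathcal{B}_{r,i}=\mathcal{D}_{r,i}$ (Theorem~\ref{th:Durfee_Bottom}); this is nontrivial and occupies Section~\ref{sec:combi_Durf_Bot}. Because $\mathcal{D}_{r,i}$ \emph{is} a top-down dissection, its generating function can be written cleanly, but it is \eqref{eq:D_qserie}
\[
\sum_{d_1\ge\cdots\ge d_{r-1}\ge0}\frac{q^{d_1^2+\cdots+d_{r-1}^2-d_1-\cdots-d_{r-i}}\,(1-q^{d_{r-i}})}{(q)_{d_1-d_2}\cdots(q)_{d_{r-2}-d_{r-1}}(q)_{d_{r-1}}},
\]
with \emph{negative} linear terms and an extra factor $(1-q^{d_{r-i}})$, not the Andrews--Gordon sum. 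Matching this to the product side is then done via Bressoud's identity~\eqref{Br3.3} (Theorem~\ref{thm:bressoud3.3}), proved through the Bailey lattice, rather than the classical Andrews--Gordon identity. So what you flagged as a possible ``residual $q$-series manipulation'' is in fact a separate theorem of Bressoud, and the passage from the bottom dissection to any top-down dissection is itself a substantial combinatorial result that your outline does not supply.
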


In order to prove Theorem~\ref{th:conjtrue}, we will define several combinatorial objects related to Durfee dissections. Not only do we prove Theorem~\ref{th:conjtrue}, but we also obtain the equality with two other sets of partitions. The new types of Durfee dissections, the corresponding new sets of partitions and the way they are connected to Conjecture~\ref{conj:Pooneh_original} are described in Section~\ref{sec:outline}.

Our combinatorial tools to prove Theorem~\ref{th:conjtrue} are inspired by a companion to the Gordon identities due to Andrews, called the Andrews--Gordon identities~\cite{A74}.
Before stating them, recall some standard notations for $q$-series which can be found in~\cite{GR}. 
The $q$-shifted factorial is defined by
\begin{equation*}
(a)_\infty\equiv (a;q)_\infty:=\prod_{j\geq 0}(1-aq^j)\;\;\;\;\mbox{and}\;\;\;\;(a)_k\equiv (a;q)_k:=\frac{(a;q)_\infty}{(aq^k;q)_\infty},
\end{equation*}
where $k$ is any integer.
Since the base $q$ is often the same throughout this paper,
it may be readily omitted (in notation, writing $(a)_k$ instead of $(a;q)_k$, etc.) which will not lead to any confusion. For brevity, write
\begin{equation*}
(a_1,\ldots,a_m;q)_k:=(a_1)_k\cdots(a_m)_k,
\end{equation*}
where $k$ is an integer or infinity. The $q$-binomial coefficient is defined as follows:
$$\left[{n\atop k}\right]_q:=\frac{(q)_n}{(q)_k(q)_{n-k}},$$
and we notice that by definition $\left[{n\atop k}\right]_q=0$ if $k<0$ or $k>n$. It is the generating function for partitions with largest part $\leq k$ and number of parts $\leq n-k$, or equivalently partitions whose Young diagram fits inside a $k \times (n-k)$ rectangle.

In~\cite{A74}, Andrews rewrote the Gordon identities from Theorem~\ref{th:Gordon} in a combinatorial form, and later extended them as a $q$-series identity. The latter can be stated as follows.
\begin{Theorem}[Andrews--Gordon identities]\label{th:AGseries}
Let $r \geq 2$ and $1 \leq i \leq r$ be two integers. We have
\begin{equation}\label{eq:AGri}
\sum_{n_1\geq\dots\geq n_{r-1}\geq0}\frac{q^{n_1^2+\dots+n_{r-1}^2+n_{i}+\dots+n_{r-1}}}{(q)_{n_1-n_2}\dots(q)_{n_{r-2}-n_{r-1}}(q)_{n_{r-1}}}=\frac{(q^{2r+1},q^{i},q^{2r-i+1};q^{2r+1})_\infty}{(q)_\infty}.
\end{equation}
\end{Theorem}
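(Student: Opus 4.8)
The plan is to derive the analytic identity~\eqref{eq:AGri} from Gordon's identities (Theorem~\ref{th:Gordon}), which we may assume, by recognising each side of~\eqref{eq:AGri} as the generating function of a set of partitions.

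I would first dispose of the right-hand side. Since $1\leq i\leq r$, the three residues $0,i,-i$ are distinct modulo the odd number $2r+1$; expanding the three factors of the numerator according to these residue classes and cancelling against $(q)_\infty$ gives
\begin{equation*}
\frac{(q^{2r+1},q^{i},q^{2r-i+1};q^{2r+1})_\infty}{(q)_\infty}
=\prod_{\substack{n\geq 1\\ n\not\equiv 0,\pm i\!\!\pmod{2r+1}}}\frac{1}{1-q^n}
=\sum_{n\geq 0}E_{r,i}(n)\,q^n,
\end{equation*}
the generating function for the partitions in $\mathcal{E}_{r,i}$. By Gordon's identities $E_{r,i}(n)=T_{r,i}(n)$, so the right-hand side of~\eqref{eq:AGri} equals $\sum_{n\geq 0}T_{r,i}(n)\,q^n$. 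It thus remains to show that the multisum on the left of~\eqref{eq:AGri} is \emph{also} the generating function for $\mathcal{T}_{r,i}$.

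The second and main step is a combinatorial reading of this multisum through a Durfee dissection, the very tool the rest of the paper develops. Given $\lambda\in\mathcal{T}_{r,i}$, I would extract a nested family of $r-1$ regions with sides $n_1\geq n_2\geq\cdots\geq n_{r-1}\geq 0$: the first $i-1$ are genuine Durfee squares, contributing $q^{n_k^2}$ for $1\leq k\leq i-1$, while the remaining $r-i$ are Durfee rectangles of shape $n_k\times(n_k+1)$, contributing $q^{n_k^2+n_k}$ for $i\leq k\leq r-1$, which is precisely where the linear exponent $n_i+\cdots+n_{r-1}$ comes from. The partition inserted between the $k$-th and the $(k+1)$-th region is free once its number of rows is controlled by $n_k-n_{k+1}$, and so contributes the factor $1/(q)_{n_k-n_{k+1}}$, with the convention $n_r=0$ producing the last factor $1/(q)_{n_{r-1}}$. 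Assembling these weights over all chains $(n_1,\dots,n_{r-1})$ and all inserted partitions reproduces exactly the left-hand side of~\eqref{eq:AGri}, so the goal is to prove that this assignment is a weight-preserving bijection between $\mathcal{T}_{r,i}$ and such dissected diagrams.

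I expect this bijection to be the hard part. One must verify simultaneously that the gap condition $\lambda_j-\lambda_{j+r-1}\geq 2$ is equivalent to the mere nesting $n_1\geq\cdots\geq n_{r-1}$ of the sides, that the pieces glued between consecutive regions are genuinely unconstrained, and---most delicately---that the bound ``at most $i-1$ parts equal to $1$'' is exactly what forces the first $i-1$ regions to be squares and the last $r-i$ to be rectangles, thereby producing the shift $n_i+\cdots+n_{r-1}$ and no other. Tracking the precise place where the parameter $i$ enters the dissection is where the argument is most error-prone. A self-contained alternative, bypassing Gordon's identities altogether, would be to establish~\eqref{eq:AGri} analytically by iterating Bailey's lemma $r-1$ times on the unit Bailey pair relative to $a=1$: the iteration generates the multisum on the left, the free parameters of Bailey's lemma being specialised at the last $r-i$ steps so as to create the linear term, and the single sum over the seed $\alpha_n$ then collapses to the theta quotient on the right by the Jacobi triple product; here too the delicate point is the bookkeeping of the specialisations that yield precisely $n_i+\cdots+n_{r-1}$.
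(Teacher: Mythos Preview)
Your main plan has a genuine gap. The Durfee dissection you describe does \emph{not} set up a bijection between $\mathcal{T}_{r,i}$ and the objects counted by the multisum; rather, as the paper explains in Section~\ref{sec:outline}, that dissection is a bijection onto a \emph{different} set $\mathcal{A}_{r,i}$ of partitions, defined precisely by the shape of their vertical $(i-1)$-Durfee dissection (Theorem~\ref{th:AGcombi}). A generic element of $\mathcal{A}_{r,i}$ does not satisfy the gap condition $\lambda_j-\lambda_{j+r-1}\geq 2$, and conversely a generic element of $\mathcal{T}_{r,i}$ need not have all Durfee squares/rectangles below the $(r-1)$-th empty. So the equivalence you say ``one must verify'' --- that the gap condition is ``equivalent to the mere nesting $n_1\geq\cdots\geq n_{r-1}$ of the sides'' --- is simply false, and the proposed bijection does not exist along these lines. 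Andrews's 1974 argument that the multisum equals $\sum_n T_{r,i}(n)q^n$ does not use Durfee squares on the Young diagram; it uses a different decomposition of Gordon-type partitions based on multiplicities. (Incidentally, in the Durfee picture the factors $1/(q)_{n_k-n_{k+1}}$ are not contributed by unconstrained ``inserted'' partitions between regions: the partitions $\mu_j$ to the right of each region lie in boxes and contribute $q$-binomials, which only after cancellation produce those factors.)

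Your analytic alternative also has a gap. Iterating the Bailey lemma on the unit Bailey pair with $a=1$ (or $a=q$) yields~\eqref{eq:AGri} only for $i=r$ (resp.\ $i=1$); the paper states explicitly in Section~\ref{sec:Bailey} that ``this method \dots\ fails when one aims to prove them in such a direct way for general $i$.'' The hoped-for specialisation of the $\rho$-parameters at the last $r-i$ steps to create the shift $n_i+\cdots+n_{r-1}$ does not exist within the ordinary Bailey chain, because every step of Corollary~\ref{coro:baileylemma} keeps the same $a$. One needs to change $a$ to $a/q$ once along the way, which is exactly the Bailey \emph{lattice} of~\cite{AAB} (Theorem~\ref{thm:baileylattice} and Corollary~\ref{coro:baileylattice}); the paper sketches that this, applied to the unit pair at $a=q$, is how~\eqref{eq:AGri} is obtained for general $i$.
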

Note that~\eqref{rr1} is obtained from~\eqref{eq:AGri} by taking $r=i=2$ while~\eqref{rr2} is obtained from~\eqref{eq:AGri} by taking $r=2$ and  $i=1$.
The left-hand side of \eqref{eq:AGri} can be rewritten as follows
\begin{equation}
\label{eq:AG_qbinom}
\sum_{n_1\geq\dots\geq n_{r-1}\geq0}\frac{q^{n_1^2+\dots+n_{r-1}^2+n_{i}+\dots+n_{r-1}}}{(q)_{n_1}} \left[{n_1\atop n_2}\right]_q \cdots \left[{n_{r-2}\atop n_{r-1}}\right]_q,
\end{equation}
which led Andrews~\cite{A79} to find a simple combinatorial interpretation of Theorem~\ref{th:AGseries} in terms of Durfee squares and Durfee dissections that we will describe in Section~\ref{sec:outline}: we will recall Andrews' set $\mathcal{A}_{r,i}$ of partitions whose generating function is given by the left-hand side of~\eqref{eq:AGri} (or equivalently~\eqref{eq:AG_qbinom}). On the other hand, the right-hand side of~\eqref{eq:AGri} is clearly the generating function for partitions in $\mathcal{E}_{r,i}$.

Inspired by this, and introducing new kinds of Durfee dissections, we will compute the generating function for the partitions in Conjecture~\ref{conj:Pooneh_original}. Thus, as will be explained in Section~\ref{sec:outline}, we will reduce the proof of the conjecture to proving the following identity, valid for all integers $r>0$ and $0\leq i\leq r-1$:
\begin{equation}\label{AGP}
\sum_{s_1\geq\dots\geq s_{r-1}\geq0}\frac{q^{s_1^2+\dots+s_{r-1}^2-s_1-\dots-s_i}(1-q^{s_i})}{(q)_{s_1-s_2}\dots(q)_{s_{r-2}-s_{r-1}}(q)_{s_{r-1}}}=\frac{(q^{2r+1},q^{r-i},q^{r+i+1};q^{2r+1})_\infty}{(q)_\infty},
\end{equation}
where we take the convention that when $i=0$, the left-hand side (where $s_0$ is not well-defined) is simply
$$\sum_{s_1\geq\dots\geq s_{r-1}\geq0}\frac{q^{s_1^2+\dots+s_{r-1}^2}}{(q)_{s_1-s_2}\dots(q)_{s_{r-2}-s_{r-1}}(q)_{s_{r-1}}}.$$

One sees that~\eqref{rr1} is obtained from~\eqref{AGP} by taking $r=2$ and $i=0$ while~\eqref{rr2} is obtained from~\eqref{AGP} by taking $r=2$, $i=1$ and shifting the integer $s_1$ to $s_1+1$ in the summation.

Note that the role played by $i$ in~\eqref{eq:AGri} is now played by $r-i$, but to fit with classical techniques and notations regarding $q$-series that will follow, we chose to keep this change of index. 

Actually, for $i=0$ the above formula is exactly the instance $i=r$ of~\eqref{eq:AGri}, while for $i>0$, \eqref{AGP} is a consequence of the following result due to Bressoud~\cite[(3.3)]{Br80}.
\begin{Theorem}[Bressoud]\label{thm:bressoud3.3}
For all integers $r>0$ and $0\leq i\leq r-1$, we have:
\begin{equation}\label{Br3.3}
\sum_{s_1\geq\dots\geq s_{r-1}\geq0}\frac{q^{s_1^2+\dots+s_{r-1}^2-s_1-\dots-s_i}}{(q)_{s_1-s_2}\dots(q)_{s_{r-2}-s_{r-1}}(q)_{s_{r-1}}}=\sum_{k=0}^{i}\frac{(q^{2r+1},q^{r-i+k},q^{r+i-k+1};q^{2r+1})_\infty}{(q)_\infty}.
\end{equation}
\end{Theorem}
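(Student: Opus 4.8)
The plan is to establish \eqref{Br3.3} through the \emph{Bailey chain}, the natural machinery for Andrews--Gordon-type multisums and the framework underlying Bressoud's original argument. Recall that a pair $(\alpha_n,\beta_n)$ is a \emph{Bailey pair relative to} $a$ if $\beta_n=\sum_{j=0}^n \alpha_j/((q)_{n-j}(aq)_{n+j})$, and that Bailey's lemma, in its confluent form, sends such a pair to a new one with $\alpha'_n=a^nq^{n^2}\alpha_n$ and $\beta'_n=\sum_{j=0}^n a^jq^{j^2}\beta_j/(q)_{n-j}$ (its general form carries two free parameters $\rho_1,\rho_2$, recovered in the limit $\rho_1,\rho_2\to\infty$). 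First I would fix the dictionary precisely: iterating this construction, so that the resulting $\beta$-sum carries the $r-1$ quadratic exponents $s_1^2,\dots,s_{r-1}^2$ together with the chain of denominators $(q)_{s_1-s_2}\cdots(q)_{s_{r-1}}$, and then invoking the defining relation of the final pair with its external index tending to infinity, realizes the left-hand side of \eqref{Br3.3} as a limit $\lim_N\beta_N$ on one side and a single series $\tfrac{1}{(q)_\infty}\sum_{n}(\text{iteration factors})\,\alpha_n$ on the other.

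The decisive point is the joint choice of base Bailey pair and of the parameters $\rho_1,\rho_2$ at each step, which must produce two features at once: the \emph{inhomogeneous} linear shift $-s_1-\dots-s_i$, present on the first $i$ indices only, and the splitting of the answer into $i+1$ infinite products rather than one. I would arrange the iteration so that exactly $i$ steps (those creating the indices $s_1,\dots,s_i$) use finite parameters, injecting the factors $q^{-s_k}$ — this is the same device that produces the linear terms $q^{n_i+\dots+n_{r-1}}$ in \eqref{eq:AGri}, here tuned to appear with a minus sign — while the remaining $r-1-i$ steps are confluent and inject only $q^{s_k^2}$. Tracking the effect of the finite-parameter steps on the $\alpha$-side, the collapsed single series acquires the rational weight $\sum_{k=0}^{i}q^{kn}=(1-q^{(i+1)n})/(1-q^n)$; this finite geometric factor is precisely the fingerprint of the $i+1$ summands on the right of \eqref{Br3.3}.

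With the single-series side in hand, the last step is the Jacobi triple product. Writing the collapsed series as the bilateral sum $\sum_{n\in\Z}(-1)^n q^{(2r+1)\binom{n}{2}+(r-i)n}\sum_{k=0}^{i}q^{kn}$ and interchanging the two summations, the inner sum over $n$ for each fixed $k$ is, by Jacobi's identity with base $q^{2r+1}$, exactly $(q^{2r+1},q^{r-i+k},q^{r+i-k+1};q^{2r+1})_\infty$. Dividing by $(q)_\infty$ reproduces the right-hand side of \eqref{Br3.3} term by term, the value $i+1$ of its constant term matching the $n=0$ contribution of the bilateral sum.

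I expect the main obstacle to be the first half of the second paragraph: pinning down the base pair and parameter specialization that yield \emph{precisely} the minus shift on the first $i$ indices (and on no others), and rigorously extracting the weight $\sum_{k=0}^{i}q^{kn}$ on the $\alpha$-side — most safely by keeping all parameters generic, carrying out the entire computation, and only then taking the confluence, so that the geometric factor emerges as a genuine limit rather than a formal $0/0$. A cleaner but logically equivalent alternative, which avoids this delicate limit, is induction on $i$: the case $i=0$ of \eqref{Br3.3} is exactly the instance $i=r$ of \eqref{eq:AGri}, which I may assume; since the right-hand side of \eqref{Br3.3} increases by $(q^{2r+1},q^{r-i},q^{r+i+1};q^{2r+1})_\infty/(q)_\infty$ when $i$ increases by one, the inductive step is equivalent to the companion identity \eqref{AGP}. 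Proving \eqref{AGP} directly — where the factor $(1-q^{s_i})$ is exactly what licenses a unit shift of $s_1,\dots,s_i$ converting the minus signs into the Andrews--Gordon plus signs — then becomes the crux, and is of the same depth as the Bailey computation above; I would pursue it either by the same chain or, in keeping with the theme of this paper, by a bijection on the Durfee-dissected partitions counted by the common base weight $q^{s_1^2+\dots+s_{r-1}^2}/((q)_{s_1-s_2}\cdots(q)_{s_{r-1}})$.
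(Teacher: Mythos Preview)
Your endgame is correct: once one has
\[
S_i(q)=\frac{1}{(q)_\infty}\sum_{n\in\Z}(-1)^n q^{(2r+1)\binom{n}{2}+(r-i)n}\sum_{k=0}^{i}q^{kn},
\]
swapping sums and applying the Jacobi triple product term by term gives the right-hand side of~\eqref{Br3.3}, exactly as in the paper. The gap is in getting to this bilateral sum.

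You propose to produce the shift $-s_1-\cdots-s_i$ by running the Bailey \emph{chain} relative to a fixed $a$ and choosing finite $\rho_1,\rho_2$ at $i$ of the steps. This does not work as stated. In the confluent Bailey lemma the weight injected at each step is $a^{s}q^{s^2}$; obtaining $q^{s^2-s}$ would force $a=q^{-1}$, which makes the defining relation singular (the factor $(aq)_{n+j}=(1)_{n+j}$ vanishes). Keeping $\rho_1,\rho_2$ finite does not fix this: the Bailey lemma with general $\rho_1,\rho_2$ contributes $(aq/\rho_1\rho_2)^s$ together with the factors $(\rho_1,\rho_2)_s$, $(aq/\rho_1\rho_2)_{n-s}$ and $(aq/\rho_1,aq/\rho_2)_n$, and no specialization of $\rho_1,\rho_2$ simultaneously produces a clean $q^{-s}$ and kills those extra factors while keeping $a$ fixed. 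You recognise this yourself (``pinning down the base pair and parameter specialization'' is flagged as the main obstacle), but the proposal does not resolve it, and the fallback by induction on $i$ via~\eqref{AGP} only relocates the same difficulty.

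What the paper does instead is use the Bailey \emph{lattice} of Agarwal--Andrews--Bressoud: a single step (Theorem~\ref{thm:baileylattice}) that converts a Bailey pair relative to $a$ into one relative to $a/q$. Iterating Corollary~\ref{coro:baileylemma} $r-i$ times at $a$, applying this lattice step once, and then iterating $i-1$ further times at $a/q$ gives Corollary~\ref{coro:baileylattice}; the $q^{-s_1-\cdots-s_i}$ appears precisely because the last $i$ iterations are run at the shifted parameter. Specialising to the unit Bailey pair with $a=1$ (handled via $(1-a)/(a)_\infty=1/(aq)_\infty$ before setting $a=1$) yields the single series with the factor $(1-q^{(2i+2)j})/(1-q^j)=(1+q^{(i+1)j})\sum_{k=0}^iq^{kj}$, which then splits into the $i+1$ bilateral sums you anticipated. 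So the missing idea in your plan is exactly the $a\mapsto a/q$ move; with it, your outline becomes the paper's proof.
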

Note that there seems to be a mistake in Bressoud's formula (3.3), in which $\pm(k-r+i)$ (in his notation) should be changed to $\pm(k-r+i+1)$. Moreover, denoting by $S_i(q)$ the left-hand side of~\eqref{Br3.3} (the dependence on $r$ is omitted), we immediately see that for $i>0$, the left-hand side of~\eqref{AGP} is equal to
 $$S_i(q)-S_{i-1}(q)=\sum_{k=0}^{i}\frac{(q^{2r+1},q^{r-i+k},q^{r+i-k+1};q^{2r+1})_\infty}{(q)_\infty}-\sum_{k=1}^{i}\frac{(q^{2r+1},q^{r-i+k},q^{r+i-k+1};q^{2r+1})_\infty}{(q)_\infty},$$
 which is telescoping and yields the right-hand side of~\eqref{AGP}.

A classical approach to obtain and prove identities like~\eqref{eq:AGri} and~\eqref{AGP} is the Bailey lemma, originally found by Bailey~\cite{Ba} and whose iterative strength was later highlighted by Andrews~\cite{A1, A2, AAR} through the so-called Bailey chain. As will be recalled later, this is an efficient way to prove some instances of the Andrews--Gordon identities in Theorem~\ref{th:AGseries}, but it fails for general $r,i$. One indeed needs a more general tool, namely the Bailey lattice, first described in~\cite{AAB}. Alternatively, as shown in~\cite[Section~3]{ASW}, one can combine the Bailey lemma with tricky calculations, therefore bypassing the Bailey lattice (see also~\cite{BIS}, where it is explained how changing the base also avoids the need of the Bailey lattice). In Section~\ref{sec:Bailey} we will prove the corrected version of Bressoud's result given in Theorem~\ref{thm:bressoud3.3} by using the Bailey lattice, therefore providing a different approach from Bressoud's original one in~\cite{Br80}.\\


This paper is organized as follows. In Section~\ref{sec:outline} we provide all the combinatorial objects and tools which are necessary for our proof. We will then be able to introduce new combinatorial sets of partitions involving Durfee squares and rectangles, namely $\mathcal{B}_{r,i}$ (which is nothing but a reformulation of the set $\mathcal{C}_{r,i}$) and $\mathcal{D}_{r,i}$. This will enable us to restate Conjecture~\ref{conj:Pooneh_original} in a way that is easier to handle combinatorially (see Conjecture~\ref{conj:Pooneh_combinatorial}). We will also see that the generating series for the set $\mathcal{D}_{r,r-i}$ is the left-hand side of~\eqref{AGP}. In Section~\ref{sec:combi_Durf_Bot} we prove combinatorially that $\mathcal{B}_{r,i}=\mathcal{D}_{r,i}$, and alternatively we prove algebraically in Section~\ref{sec:alg_Durf_Bot} that $\mathcal{C}_{r,i}=\mathcal{D}_{r,i}$. Next we prove Theorem~\ref{thm:bressoud3.3} through the Bailey lattice in Section~\ref{sec:Bailey}, which immediately implies~\eqref{AGP}, showing itself that $\mathcal{D}_{r,r-i}$ and $\mathcal{E}_{r,r-i}$ have the same generating series, therefore finishing the proof of Theorem~\ref{th:conjtrue}.
In Section~\ref{sec:bij} we give a direct bijection between partitions of $n$ in $\mathcal{D}_{r,r-1}$ and in $\mathcal{A}_{r,r-1}$, which, in complement to the easy cases $i=1$ and $i=r$, is the only situation for which we have a completely combinatorial proof of the conjecture. Finally, we conclude in Section~\ref{sec:final} by a list of open problems which arise from our study.

\section{New Durfee dissections and proof strategy}
\label{sec:outline}

Let us now turn to the combinatorial objects and dissections which will be used in our proof, before stating our main theorem in a combinatorial form.

We start by recalling the Durfee dissection which was defined by Andrews in his combinatorial interpretation of the Andrews--Gordon identities~\eqref{eq:AGri} in~\cite{A79}. We use a slightly different terminology than his, which will help avoid any confusion with our new types of dissections. Define the Durfee square of a partition $\lambda$ to be the largest square of size $k \times k$ fitting in the top-left corner of the Young diagram of $\lambda$.
In Figure~\ref{fig:durfee_dissec}, $A_1$ is the Durfee square of the partition.

Similarly we can define its vertical Durfee rectangle to be the largest vertical rectangle of size $(k-1) \times k$, i.e. with $k-1$ columns and $k$ rows, fitting in the top-left corner of its Young diagram.

It is possible to define successive Durfee squares/rectangles by drawing the first Durfee square/rectangle, and then drawing the Durfee square/rectangle of the partition restricted to the parts below it, and repeating the process until the row below a square/rectangle is empty.
For convenience in our future proofs, we take the convention that we can still draw Durfee squares/rectangles after exiting the partition, but that they are empty. When 
we choose that the first $i-1$ Durfee squares/rectangles are squares, and that all the following ones are rectangles, the sequence of non-empty Durfee squares/rectangles in $\lambda$ is uniquely defined and is called the (vertical) $(i-1)$-Durfee dissection of $\lambda$.
We denote the successive Durfee squares (resp. rectangles) by $A_1, \dots, A_{i-1}$ (resp. $A'_{i}, A'_{i+1}, \dots$). 

Figure \ref{fig:durfee_dissec} shows the vertical $2$-Durfee dissection of a partition (the last rectangle $A'_4$ is of size $0 \times 1$, and all rectangles below are empty).
\begin{figure}[H]
\includegraphics[width=0.4\textwidth]{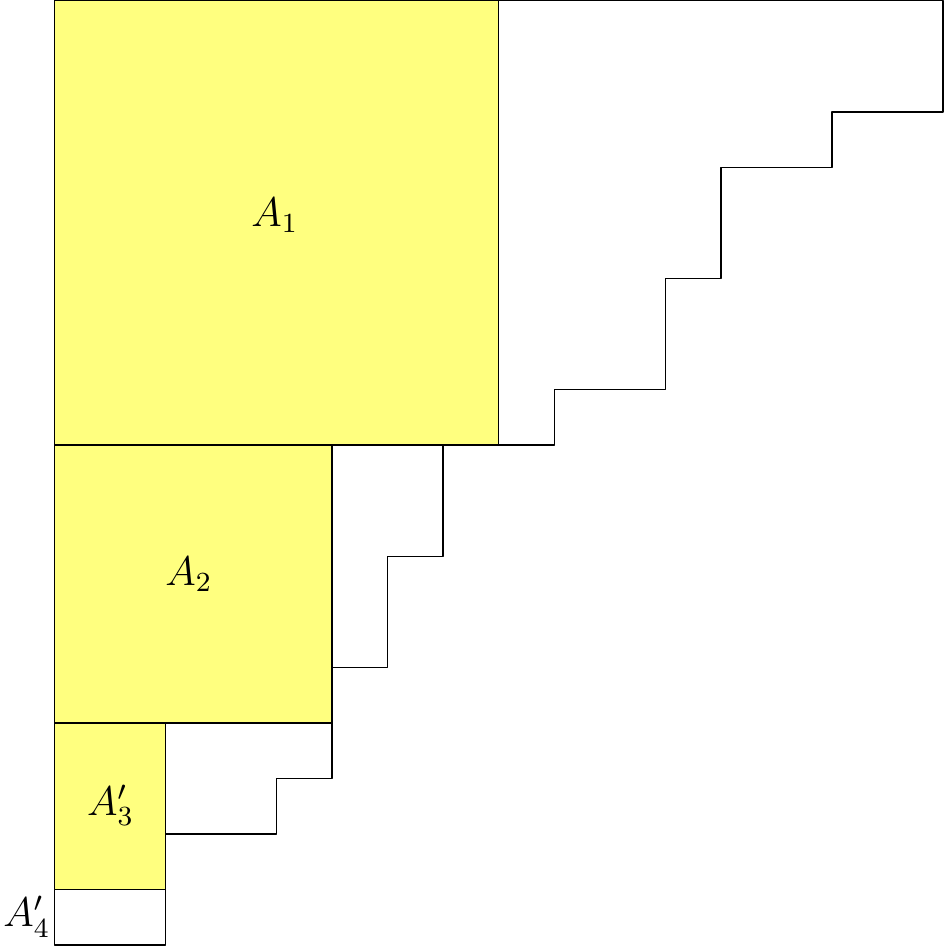}
\caption{Vertical $2$-Durfee dissection}
\label{fig:durfee_dissec}
\end{figure}

Recall from the introduction that for integers $r \geq 2$ and $1 \leq i \leq r$,  $\mathcal{E}_{r,i}$ is the set of partitions whose parts are not congruent to $0,\pm i \mod (2r+1)$, and that for all nonnegative integers $n$, $E_{r,i}(n)$ is the number of partitions of $n$ which belong $\mathcal{E}_{r,i}$. Andrews' combinatorial version of the Andrews--Gordon identities is the following.

\begin{Theorem}[Andrews]
\label{th:AGcombi}
Let $r \geq 2$ and $1 \leq i \leq r$ be two integers. Let $\mathcal{A}_{r,i}$ be the set of partitions such that in their vertical $(i-1)$-Durfee dissection, all vertical Durfee rectangles below $A'_{r-1}$ are empty, and such that the last row of each non-empty Durfee rectangle is actually a part of the partition.  For all nonnegative integers $n$, denote by $A_{r,i}(n)$ the number of partitions of $n$ which belong to $\mathcal{A}_{r,i}$. Then we have
$$A_{r,i}(n)=E_{r,i}(n).$$
\end{Theorem}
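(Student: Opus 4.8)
The plan is to prove Theorem~\ref{th:AGcombi} by showing that the generating function for partitions in $\mathcal{A}_{r,i}$ equals the left-hand side of~\eqref{eq:AGri} (equivalently~\eqref{eq:AG_qbinom}), since by the Andrews--Gordon identities of Theorem~\ref{th:AGseries} this left-hand side equals the right-hand side, which is manifestly the generating function for $\mathcal{E}_{r,i}$. Thus the combinatorial content lies entirely in translating the vertical $(i-1)$-Durfee dissection into the summand of~\eqref{eq:AG_qbinom}, and the equality with $E_{r,i}(n)$ then follows for free from the already-stated $q$-series identity.

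First I would fix a partition $\lambda \in \mathcal{A}_{r,i}$ and record the sizes of its successive Durfee squares and rectangles. Write $n_1 \geq n_2 \geq \cdots \geq n_{r-1} \geq 0$ for the heights (number of rows) of the successive pieces, where $A_1,\dots,A_{i-1}$ are $n_j \times n_j$ squares and $A'_i,\dots,A'_{r-1}$ are $(n_j-1)\times n_j$ vertical rectangles; the condition defining $\mathcal{A}_{r,i}$ forces all pieces below $A'_{r-1}$ to be empty, so these $r-1$ parameters capture the full dissection. The heights are automatically weakly decreasing because each successive Durfee square/rectangle is taken inside the part of the partition lying below the previous one. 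The key bookkeeping step is to decompose the weight $|\lambda|$ into four contributions: the squares contribute $n_1^2+\cdots+n_{i-1}^2$; the vertical rectangles of height $n_j$ contribute $n_j(n_j-1)=n_j^2-n_j$ each, for $j=i,\dots,r-1$; together these give the exponent $n_1^2+\cdots+n_{r-1}^2 - (n_i+\cdots+n_{r-1})$, which after the harmless reindexing matches the quadratic-plus-linear exponent in~\eqref{eq:AG_qbinom}. I would then account for the remaining cells of $\lambda$ lying to the right of and below these nested pieces, which decompose into the regions between consecutive Durfee pieces and the tail below $A'_{r-1}$.

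The generating-function factors should then fall out as follows. The partition to the right of the $j$-th piece, having at most $n_j$ rows and being controlled in width by the difference between successive heights, contributes the $q$-binomial coefficients $\left[{n_{j}\atop n_{j+1}}\right]_q$; the material below the last piece and the general unrestricted tail contribute the $1/(q)_{n_1}$ factor. Here the condition in the definition of $\mathcal{A}_{r,i}$ that the last row of each non-empty Durfee rectangle is actually a part of the partition is crucial: it is precisely what prevents overcounting and forces the vertical rectangles (rather than squares) to appear at the right positions, pinning down the linear term $n_i+\cdots+n_{r-1}$ in the exponent. The main obstacle I anticipate is making this cell-by-cell decomposition rigorous and checking that the map from $\lambda$ to the tuple $(n_1,\dots,n_{r-1})$ together with the residual partitions in each region is a bijection onto the index set of~\eqref{eq:AG_qbinom}, with weights adding up correctly and no region double-counted; in particular one must verify that the convention allowing empty Durfee pieces after exiting the partition does not introduce spurious terms. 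Once this bijection and the weight accounting are established, summing $q^{|\lambda|}$ over all $\lambda \in \mathcal{A}_{r,i}$ reproduces~\eqref{eq:AG_qbinom} exactly, and Theorem~\ref{th:AGseries} closes the argument to give $A_{r,i}(n)=E_{r,i}(n)$.
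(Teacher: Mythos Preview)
Your overall strategy---compute the generating function of $\mathcal{A}_{r,i}$ via the Durfee dissection, identify it with the left side of~\eqref{eq:AG_qbinom}, and invoke Theorem~\ref{th:AGseries}---is exactly what the paper does. However, two details of your decomposition are off and would need correcting.

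First, your choice $n_j = \text{height}$ of the $j$-th piece does \emph{not} make $n_1\ge\cdots\ge n_{r-1}$: below a square of side $n_{i-1}$ the parts are at most $n_{i-1}$, so the next vertical rectangle can have width $n_{i-1}$ and hence height $n_{i-1}+1$. The paper instead takes $n_j$ to be the \emph{smallest} side, so that a vertical rectangle $A'_j$ has $n_j$ columns and $n_j+1$ rows and contributes $q^{n_j^2+n_j}$, matching the $+n_j$ (not $-n_j$) in~\eqref{eq:AG_qbinom}. Your ``harmless reindexing'' is thus not uniform across the square/rectangle transition and should be replaced by this convention.

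Second, your attribution of the factor $1/(q)_{n_1}$ to ``material below the last piece and the general unrestricted tail'' is wrong: by the very definition of $\mathcal{A}_{r,i}$ there is nothing below $A'_{r-1}$. The factor $1/(q)_{n_1}$ comes from the partition $\mu_1$ sitting to the \emph{right} of the top Durfee square $A_1$, which has at most $n_1$ parts of unrestricted size. The $q$-binomials $\left[{n_{j-1}\atop n_j}\right]_q$ come from the partitions $\mu_j$ to the right of the $j$-th piece for $j\ge 2$, each fitting in an $(n_{j-1}-n_j)\times n_j$ box; the ``last row is a part'' condition on each rectangle is what guarantees $\mu_j$ has at most $n_j$ (not $n_j+1$) rows. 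With these two fixes your argument becomes the paper's.
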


Figure \ref{fig:andrews} shows a partition in $\mathcal{A}_{5,3}$. Indeed, in its vertical $2$-Durfee dissection, all the vertical Durfee rectangles below $A'_4$ are empty, and the last row of each non-empty vertical rectangle (represented in orange) is actually a part of the partition. The crosses represent boxes which, by definition, do not belong to the partition.
\begin{figure}
\includegraphics[width=0.55\textwidth]{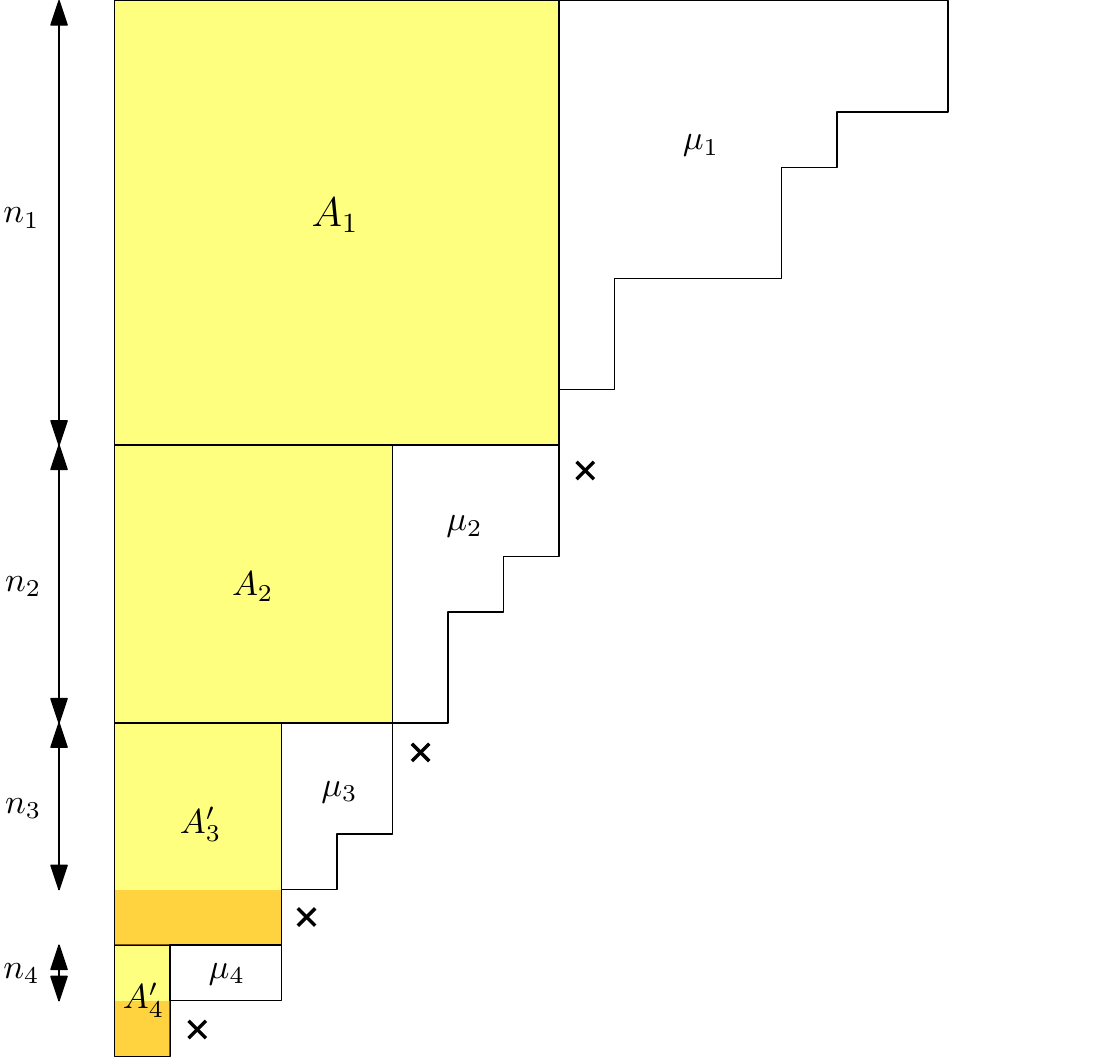}
\caption{Vertical $2$-Durfee dissection of a partition in $\mathcal{A}_{5,3}$}
\label{fig:andrews}
\end{figure}

Partitions in $\mathcal{A}_{r,i}$ are generated by~\eqref{eq:AG_qbinom}, that is the left-hand side of the $q$-series version for Andrews--Gordon identities given in Theorem~\ref{th:AGseries}. Indeed, denote by $\mu_j$ the partition to the right of the Durfee square (resp. rectangle) $A_j$ (resp. $A_j'$) and by $n_j$ the size of the smallest side of $A_j$ (resp. $A'_j$). See Figure~\ref{fig:andrews} for an illustration of these notations.  Then $\mu_1$ is a partition into at most $n_1$ parts, generated by $1/(q)_{n_1}$. Then for all $2 \leq j \leq r-1$, the partition $\mu_j$ has to fit inside a $(n_{j-1}-n_{j}) \times n_j$ rectangle, which is generated by the $q$-binomial coefficient $\left[{n_{j-1}\atop n_{j}}\right]_q$. Finally, the Durfee squares $A_1, \dots, A_{i-1}$ are generated by $q^{n_1^2}, \dots , q^{n_{i-1}^2}$ and the vertical Durfee rectangles $A'_i, \dots, A'_{r-1}$ are generated by $q^{n_i^2+n_i}, \dots , q^{n_{r-1}^2+n_{r-1}}$.\\

To prove Conjecture \ref{conj:Pooneh_original}, we first reformulate it in a more combinatorial way.

We define the bottom square (resp. bottom rectangle) of a partition $\lambda=(\lambda_1,\dots,\lambda_s)$ to be the square of size $\lambda_s \times \lambda_s$ (resp. the horizontal rectangle of size $\lambda_s \times (\lambda_s-1)$) whose bottom coincides with the bottom of the Young diagram of $\lambda$.
In Figure~\ref{fig:bottom_squares}, $B_1$ is the bottom square of the partition.

Just like for Durfee squares, we can define successive bottom squares/rectangles by drawing the first bottom square/rectangle, and then drawing the bottom square/rectangle of the partition restricted to the parts above it, and repeating the process until the row above a square/rectangle is empty. 
For convenience, we take the convention that we can still draw bottom squares/rectangles after exiting the partition, but that they are empty. We also allow bottom rectangles of size $1 \times 0$ (this can appear if the smallest part of the partition is a $1$). 
When we choose that the first $i-1$ bottom squares/rectangles are squares, and that all the following ones are rectangles, the sequence of non-empty bottom squares/rectangles in $\lambda$ is uniquely defined and we call it the \textit{$(i-1)$-bottom dissection} of $\lambda$.
We denote the successive bottom squares (resp. rectangles) by $B_1, \dots, B_{i-1}$ (resp. $B'_{i}, B'_{i+1}, \dots$). 

Figure~\ref{fig:bottom_squares} shows the successive bottom squares/rectangles of a partition, with two successive bottom squares (the bottom rectangles above $B'_4$ are empty).
\begin{figure}[H]
\includegraphics[width=0.4\textwidth]{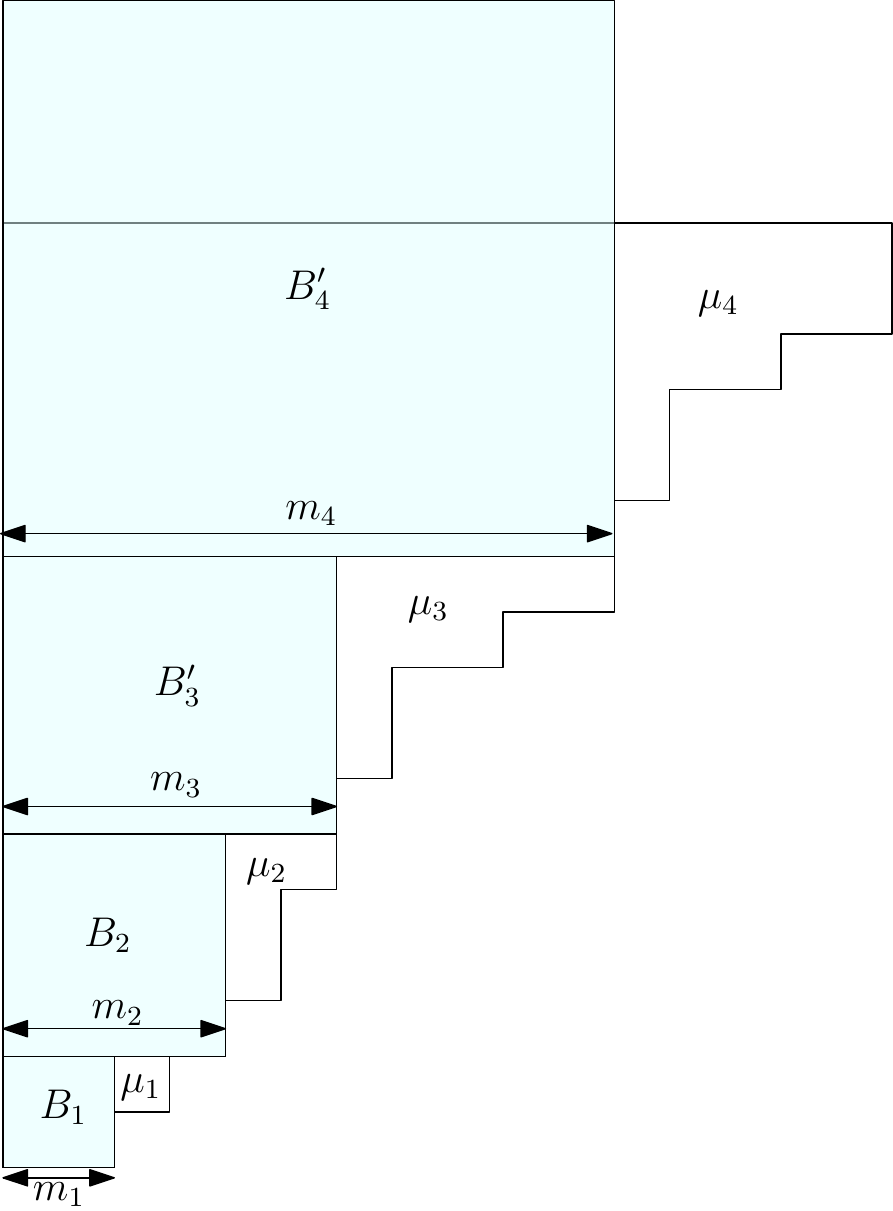}
\caption{The $2$-bottom dissection of a partition}
\label{fig:bottom_squares}
\end{figure}

Let $\mathcal{B}_{r,i}$ be the set of partitions such that in their $(i-1)$-bottom dissection, all bottom rectangles above $B'_{r-1}$ are empty.
In other words, if one draws $i-1$ successive bottom squares $B_1, \dots, B_{i-1}$ followed by $r-i$ bottom rectangles $B'_{i}, \dots,  B'_{r-1}$, then the row above $B'_{r-1}$ is empty. Denote by $B_{r,i}(n)$ the number of partitions of $n$ which belong to $\mathcal{B}_{r,i}$.
For example, the partition in Figure~\ref{fig:bottom_squares} belongs to $\mathcal{B}_{5,3}$ but not to $\mathcal{B}_{4,3}$. 

By definition of bottom squares/rectangles, for all $1 \leq i \leq r$, we have  
$$\mathcal{B}_{r,i}= \mathcal{C}_{r,i},$$ 
so Conjecture~\ref{conj:Pooneh_original} can be reformulated as follows.

\begin{Conjecture}[Reformulation of Conjecture~\ref{conj:Pooneh_original}]\label{conj:Pooneh_combinatorial}
Let $r \geq 2$ and $1 \leq i \leq r$ be two integers. Then for all nonnegative integers $n$, we have
$$B_{r,i}(n)=A_{r,i}(n)=E_{r,i}(n).$$
\end{Conjecture}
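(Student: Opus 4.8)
The plan is to prove Conjecture~\ref{conj:Pooneh_combinatorial} by establishing the chain of equalities through a common intermediate object. The equality $A_{r,i}(n)=E_{r,i}(n)$ is already provided by Theorem~\ref{th:AGcombi} (Andrews), so the genuinely new content is the equality $B_{r,i}(n)=A_{r,i}(n)$, i.e.\ that the bottom-dissection set $\mathcal{B}_{r,i}$ (equivalently $\mathcal{C}_{r,i}$) is equinumerous with Andrews' Durfee-dissection set $\mathcal{A}_{r,i}$. Rather than attack this directly, I would route the argument through the auxiliary set $\mathcal{D}_{r,i}$ promised in the introduction, proving $\mathcal{B}_{r,i}=\mathcal{D}_{r,i}$ as an identity of \emph{sets} of partitions and then comparing generating functions.

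First I would set up the generating function for $\mathcal{B}_{r,i}$ by reading off the $(i-1)$-bottom dissection geometrically, exactly mirroring how Andrews' computation decodes the Durfee dissection into~\eqref{eq:AG_qbinom}. If $s_j$ denotes the side length of the $j$-th bottom square/rectangle and $\nu_j$ the partition sitting to the appropriate side of it, then the squares $B_1,\dots,B_{i-1}$ contribute factors $q^{s_1^2},\dots,q^{s_{i-1}^2}$ while the horizontal rectangles $B'_i,\dots,B'_{r-1}$ contribute $q^{s_i^2-s_i},\dots,q^{s_{r-1}^2-s_{r-1}}$, and the intermediate partitions are governed by $q$-binomial coefficients $\bigl[{s_{j-1}\atop s_j}\bigr]_q$ together with a factor $1/(q)_{s_{r-1}}$ for the topmost piece. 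The emptiness constraint above $B'_{r-1}$ and the handling of the smallest part being~$1$ is where the factor $(1-q^{s_i})$ and the index shift in~\eqref{AGP} should appear; I would match these bookkeeping details so that the generating function of $\mathcal{D}_{r,r-i}$ becomes exactly the left-hand side of~\eqref{AGP}. The combinatorial identity $\mathcal{B}_{r,i}=\mathcal{D}_{r,i}$ would be proved by a box-by-box dissection argument (Section~\ref{sec:combi_Durf_Bot}), with the algebraic alternative $\mathcal{C}_{r,i}=\mathcal{D}_{r,i}$ giving a second route via the monomial-ideal description of $\mathcal{R}/I_{r,i}$.

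With the generating function identified, the analytic heart reduces to~\eqref{AGP}. Here I would invoke Bressoud's Theorem~\ref{thm:bressoud3.3}: for $i>0$ the left-hand side of~\eqref{AGP} equals the telescoping difference $S_i(q)-S_{i-1}(q)$, which collapses to the single product $(q^{2r+1},q^{r-i},q^{r+i+1};q^{2r+1})_\infty/(q)_\infty$, and the case $i=0$ is the instance $i=r$ of the Andrews--Gordon identity~\eqref{eq:AGri}. This product is manifestly the generating function for $\mathcal{E}_{r,r-i}$, so chaining everything gives $B_{r,i}(n)=E_{r,i}(n)$, and Theorem~\ref{th:AGcombi} supplies the third equality with $A_{r,i}(n)$.

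The main obstacle, I expect, will be twofold. First, proving Bressoud's formula rigorously: the natural tool is the Bailey lattice rather than the plain Bailey chain, since the latter handles only special $(r,i)$, and care is needed because Bressoud's printed formula~(3.3) contains the typo noted in the excerpt ($\pm(k-r+i)$ should read $\pm(k-r+i+1)$). Executing the Bailey-lattice iteration correctly and verifying the corrected exponents is the most delicate analytic step. Second, on the combinatorial side, the proof that $\mathcal{B}_{r,i}=\mathcal{D}_{r,i}$ requires a careful conversion between the \emph{bottom} dissection (built upward from $\lambda_s$) and a \emph{Durfee-type} dissection, keeping exact track of the degenerate cases: empty squares/rectangles drawn after exiting the partition, the $1\times 0$ rectangles arising when $\lambda_s=1$, and the constraint that at most $i-1$ parts equal~$1$. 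Reconciling these edge conventions so that the emptiness condition above $B'_{r-1}$ translates precisely into the constraints defining $\mathcal{C}_{r,i}$ (namely $N_{r,i}(\lambda)<r-1$, or $N_{r,i}(\lambda)=r-1$ with $s\le\sum_{j=1}^{r-1}p_{i,j}(\lambda)-(r-i)$) is where the bulk of the combinatorial bookkeeping lies.
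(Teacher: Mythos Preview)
Your proposal follows essentially the same route as the paper: introduce $\mathcal{D}_{r,i}$, prove the set equality $\mathcal{B}_{r,i}=\mathcal{D}_{r,i}$ (combinatorially or via the monomial ideal $I_{r,i}$), compute the generating function of $\mathcal{D}_{r,i}$ as the left-hand side of~\eqref{AGP}, and then invoke Bressoud's Theorem~\ref{thm:bressoud3.3} (proved by the Bailey lattice) together with Andrews' Theorem~\ref{th:AGcombi}. One small clarification: the clean generating function you write down (with $q^{s_j^2-s_j}$, $q$-binomials $\bigl[{s_{j-1}\atop s_j}\bigr]_q$, and a single factor $1/(q)_{s_{r-1}}$) is the one obtained from the $(r-i)$-Durfee dissection of $\mathcal{D}_{r,i}$, \emph{not} from the $(i-1)$-bottom dissection of $\mathcal{B}_{r,i}$---the latter yields the messier expression~\eqref{eq:B_qbinom} with an extra inner sum over $m$, which is precisely why the detour through $\mathcal{D}_{r,i}$ is necessary.
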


However, whereas it is possible to compute the generating function for partitions in $\mathcal{B}_{r,i}$, it does not seem easy to show directly that it equals the generating function \eqref{eq:AG_qbinom} for partitions in $\mathcal{A}_{r,i}$. Indeed, one can proceed as for the above generating series for $\mathcal{A}_{r,i}$, but from bottom to top instead of top to bottom: denote by $\mu_j$ the partition to the right of the bottom square (resp. rectangle) $B_j$ (resp. $B'_j$), by $m_j$ the size of the largest side of $B_j$ (resp. $B'_j$), let $B_k$ (resp. $B'_k$) with $1 \leq k \leq r-1$ be the last non-empty bottom square (resp. rectangle), and let $m$ be the distance between the bottom of $B_{k}$ (resp. $B'_k$) and the top of our partition (note that $\mu_{k}$ has length $\leq m-1$). The generating function for partitions in $\mathcal{B}_{r,i}$ takes therefore  the following form:

\begin{multline}
 \label{eq:B_qbinom}
1+\sum_{k=1}^{i-1}\sum_{m_{k}\geq\dots\geq m_1\geq1}\left(\sum_{m=1}^{m_k}\frac{q^{mm_k}}{(q)_{m-1}}\right)q^{\sum_{\ell=1}^{k-1}m_{\ell}^2} \prod_{\ell=1}^{k-1} \left[{m_{\ell+1}-1\atop m_{\ell}-1}\right]_q \\
+\sum_{k=i}^{r-1}\sum_{m_{k}\geq\dots\geq m_1\geq1}\left(\sum_{m=1}^{m_k-1}\frac{q^{mm_k}}{(q)_{m-1}}\right)q^{\sum_{\ell=1}^{k-1}m_{\ell}^2-\sum_{\ell=i}^{k-1}m_{\ell}} \prod_{\ell=1}^{i-1} \left[{m_{\ell+1}-1\atop m_{\ell}-1}\right]_q  \prod_{\ell=i}^{k-1} \left[{m_{\ell+1}-2\atop m_{\ell}-2}\right]_q.
\end{multline}
Simplifying the $q$-binomial coefficients, this can be rewritten as
\begin{multline}\label{eq:B_qserie}
1+\sum_{k=1}^{i-1} \sum_{m_{k}\geq\dots\geq m_1\geq1} \left(\sum_{m=1}^{m_k}\frac{q^{mm_k}}{(q)_{m-1}}\right) q^{\sum_{\ell=1}^{k-1}m_{\ell}^2} \frac{(q)_{m_k-1}}{(q)_{m_1-1}} \prod_{\ell=1}^{k-1} \frac{1}{(q)_{m_{\ell+1}-m_{\ell}}}
\\+\sum_{k=i}^{r-1}\sum_{m_{k}\geq\dots\geq m_1\geq 1}\left(\sum_{m=1}^{m_{k}-1}\frac{q^{mm_{k}}}{(q)_{m-1}}\right)q^{\sum_{\ell=1}^{k-1}m_{\ell}^2-\sum_{\ell=i}^{k-1}m_{\ell}} (1-q^{m_i-1}) \frac{(q)_{m_k-2}}{(q)_{m_1-1}} \prod_{\ell=1}^{k-1} \frac{1}{(q)_{m_{\ell+1}-m_{\ell}}}.
\end{multline}

Thus our first task is to show that the conjecture is equivalent to a conjecture involving successive Durfee squares and rectangles. But in contrast to Andrews' dissection~\cite{A79}, our Durfee rectangles will be horizontal, we will start with rectangles and finish with squares, and we will not have the restriction that the last row of Durfee rectangles have to actually be parts of the partition. 

Define the horizontal Durfee rectangle of a partition $\lambda=(\lambda_1, \dots,\lambda_s)$ to be the largest horizontal rectangle of size $k \times (k-1)$ fitting in the top-left corner of the Young diagram of $\lambda$.
\emph{From now on, when we mention a Durfee rectangle without further precision, we mean horizontal Durfee rectangle.}
In Figure~\ref{fig:bottom_durfee_squares}, $D'_1$ is the Durfee rectangle of the partition.

As we did for bottom squares/rectangles, we can define successive Durfee squares/rectangles by drawing the first Durfee square/rectangle, and then drawing the Durfee square/rectangle of the partition restricted to the parts below, and repeating the process until the row below a square/rectangle is empty.
Again, we take the convention that we can still draw Durfee squares/rectangles after exiting the partition, but that they are empty. We also allow Durfee rectangles of size $1 \times 0$, which are not considered to be empty (this can happen when there is a part $1$). 
When we choose that the first $k$ Durfee squares/rectangles are rectangles, and that the following are all squares, the sequence of non-empty Durfee squares/rectangles in $\lambda$ is uniquely defined and is called the \textit{$k$-Durfee dissection} of $\lambda$.
We denote the successive Durfee rectangles (resp. squares) by $D'_1, \dots, D'_{k}$ (resp. $D_{k+1}, D_{k+2}, \dots$). 

Figure~\ref{fig:bottom_durfee_squares} shows the $2$-Durfee dissection of the same partition as before (the Durfee squares below $D_4$ are all empty).

\begin{figure}[H]
\includegraphics[width=1\textwidth]{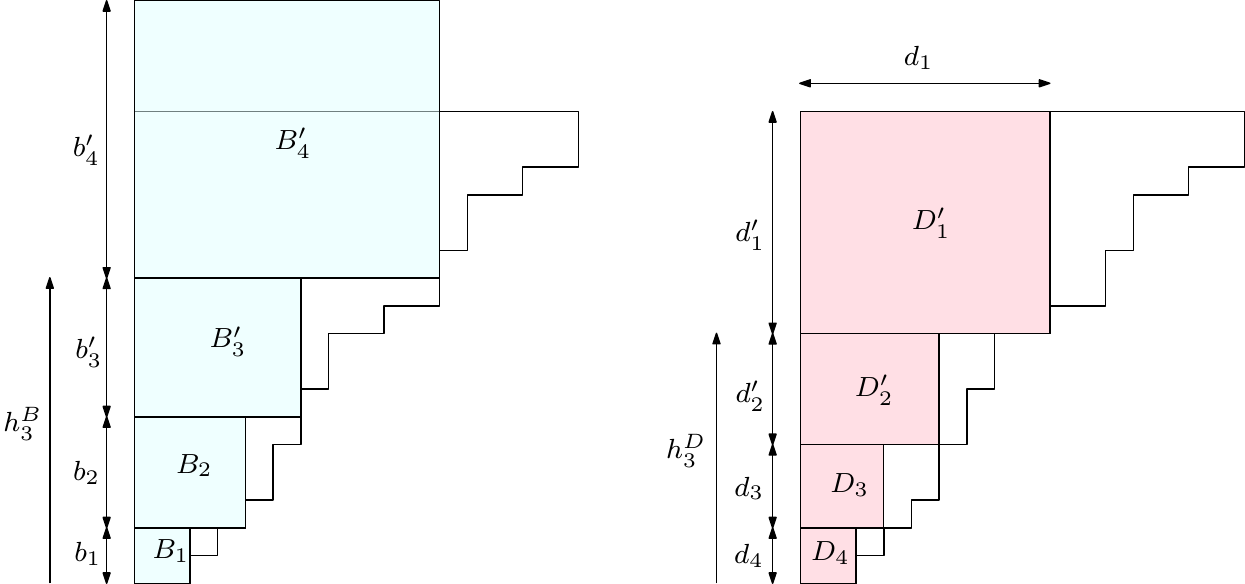}
\caption{The $2$-bottom dissection and $2$-Durfee dissection of the same partition}
\label{fig:bottom_durfee_squares}
\end{figure}

Now define $\mathcal{D}_{r,i}$ to be the set of partitions such that in their $(r-i)$-Durfee dissection, all Durfee squares below $D_{r-1}$ are empty.
In other words, if one draws $r-i$ (horizontal) Durfee rectangles $D'_1, \dots, D'_{r-i}$ followed by $i-1$ Durfee squares $D_{r-i+1}, \dots,  D_{r-1}$, then the row below $D_{r-1}$ is empty.
For example, the partition in Figure \ref{fig:bottom_durfee_squares} belongs to $\mathcal{D}_{5,3}$ but not to $\mathcal{D}_{4,3}$.

Note that Figure~\ref{fig:bottom_durfee_squares} shows a particular partition which belongs both to $\mathcal{B}_{5,3}$ and to $\mathcal{D}_{5,3}$. We show that this is a general phenomenon and that the following holds.

\begin{Theorem}\label{th:Durfee_Bottom}
Let $r \geq 2$ and $1 \leq i \leq r$ be two integers. We have
$$\mathcal{B}_{r,i}=\mathcal{D}_{r,i}.$$
\end{Theorem}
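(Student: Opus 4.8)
The plan is to prove the set equality by a peeling induction on $r$, exploiting that both dissections consist of exactly $r-1$ blocks and that, read from bottom to top, both prescribe the same pattern: $i-1$ squares at the bottom followed by $r-i$ rectangles above. Membership in either set is then the single assertion that the corresponding greedy dissection exhausts the Young diagram within these $r-1$ blocks. Concretely, I would first record the two elementary recursions coming straight from the definitions. For the bottom dissection, removing the bottom block $B_1$ (a square when $i\geq 2$, a rectangle when $i=1$) shows that $\lambda\in\mathcal{B}_{r,i}$ if and only if the partition obtained by deleting the bottom $\lambda_s$ rows (resp.\ $\lambda_s-1$ rows) lies in $\mathcal{B}_{r-1,i-1}$ (resp.\ $\mathcal{B}_{r-1,i}$). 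For the Durfee dissection, removing the top block $D'_1$ shows that $\lambda\in\mathcal{D}_{r,i}$ if and only if the sub-partition below $D'_1$ lies in $\mathcal{D}_{r-1,i}$ whenever $r>i$, with an analogous top-square recursion when $r=i$.

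The key lemma, which is the heart of the argument, is to produce for $\mathcal{D}$ a recursion peeling from the \emph{same} end as the one I use for $\mathcal{B}$: I would show that, for $i\geq 2$, deleting the bottom $\lambda_s$ rows sends $\mathcal{D}_{r,i}$ bijectively onto $\mathcal{D}_{r-1,i-1}$, with a height-$(\lambda_s-1)$ analogue governing the case $i=1$. Granting this lemma, the two sets obey identical recursions under the very same operation on $\lambda$ (removal of the smallest-part bottom block), so a straightforward induction, with base case the empty dissection $r-1=0$ for which both sets reduce to the empty partition, yields $\mathcal{B}_{r,i}=\mathcal{D}_{r,i}$.

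The main obstacle is precisely the proof of this key lemma, because it forces a reconciliation of the two opposite orientations: the bottom square is read off the smallest part $\lambda_s$, whereas the terminal block $D_{r-1}$ of the Durfee dissection is the horizontal Durfee square of the sub-partition left after greedily peeling $r-2$ blocks from the top. These two squares need not coincide, and what must be shown is only that the exhaustion index is unchanged, that is, that chopping the bottom $\lambda_s$ rows lowers the number of blocks of the top-down dissection by exactly one and leaves a terminal configuration of the prescribed type. I would establish this by comparing, block by block, the row-heights reached by the two greedy dissections, and showing that their cumulative heights first reach the total number of parts $s$ at the same index; the comparison rests on the greedy maximality of each dissection at its respective end.

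A delicate point requiring separate care is the degenerate behaviour caused by parts equal to $1$: a bottom or Durfee rectangle attached to a part $1$ is the $1\times 0$ rectangle, which occupies no rows, so the naive removal operation does not strictly decrease the partition. I would handle this by treating the parts equal to $1$ as a base layer consumed by the first squares of the dissection, checking that the convention that $1\times 0$ rectangles are non-empty makes the bookkeeping of the at-most-$(i-1)$ ones consistent across both dissections; this is exactly the feature that encodes the part-$1$ constraint hidden in the identification $\mathcal{B}_{r,i}=\mathcal{C}_{r,i}$. As an alternative to the inductive route, one can instead exhibit a direct bijection reading the data $(n_j,\mu_j)$ of one dissection and reassembling it into the other, the content again being that the two orientations produce the same block count; I expect the inductive version above to be the cleaner to write.
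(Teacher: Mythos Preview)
Your plan points at exactly the right idea and is very close to the paper's own argument. The heart of both is the same: one must compare the two greedy dissections via their cumulative row-heights and show they exhaust the Young diagram at the same index. The paper makes this explicit as the sandwich inequality $h^D_j \leq h^B_j < h^D_{j+1}$ for all $j$ (Proposition~\ref{prop:height_squares}, Proposition~\ref{prop:height_rectangles}, and their combination), which is precisely the ``block by block'' comparison you describe. Once this is in hand, the paper concludes by a disjoint-union trick (Lemma~\ref{lem:disjoint_union}): one proves only the inclusion $\mathcal{D}_{r,i}\setminus\mathcal{D}_{r-1,i-1}\subseteq\mathcal{B}_{r,i}\setminus\mathcal{B}_{r-1,i-1}$ and then observes that both families partition all partitions (with fixed $r-i$), forcing equality. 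Your organization differs: you would package the height comparison as the ``key lemma'' that removing the bottom $\lambda_s$ (or $\lambda_s-1$) rows lowers the Durfee block-count by exactly one, and then run a peeling induction on $r$.

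Two small points deserve attention. First, the word ``bijectively'' is not what you mean: the removal map is many-to-one, and what you need is the \emph{if and only if} statement ``$\lambda\in\mathcal{D}_{r,i}$ iff $\lambda\setminus B_1\in\mathcal{D}_{r-1,i-1}$''. Second, and more substantively, your key lemma does not follow from a local observation about the last Durfee block: removing $\lambda_s$ rows from the bottom can genuinely change the entire top-down Durfee dissection (e.g.\ for $\lambda=(6,5,4,3)$ in $\mathcal{D}_{3,2}$, the first rectangle drops from $4\times 3$ to $2\times 1$ upon removing the bottom three rows). So proving the key lemma really does require the full height sandwich $h^D_j \leq h^B_j < h^D_{j+1}$ for all partitions. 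But once you have that for all partitions, you already know directly that the Durfee and bottom dissections have the same number of blocks, and the peeling induction is an extra layer. It is not wrong, just slightly longer than going straight from the height inequality to the disjoint-union argument as the paper does. The delicate part-$1$ bookkeeping you flag is handled identically in both approaches.
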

We prove Theorem~\ref{th:Durfee_Bottom} combinatorially in Section~\ref{sec:combi_Durf_Bot} and algebraically in Section~\ref{sec:alg_Durf_Bot} (the algebraic proof does not use the set $\mathcal{B}_{r,i},$ it proves directly that $\mathcal{C}_{r,i}=\mathcal{D}_{r,i}).$   

Now doing the same as we did above for $\mathcal{A}_{r,i}$ and  $\mathcal{B}_{r,i}$, we can compute  the generating function for partitions in $\mathcal{D}_{r,i}=\mathcal{B}_{r,i}$ to derive  a simpler form than in~\eqref{eq:B_qbinom} (we got rid of the sum over $m$); we indeed get
\begin{equation}\label{eq:D_qbinom}
\sum_{d_1\geq\dots\geq d_{r-1}\geq0}\frac{q^{d_1^2+\dots+d_{r-1}^2-d_1-\dots-d_{r-i}}}{(q)_{d_1-1}} \left[{d_1-1\atop d_2-1}\right]_q \cdots \left[{d_{r-i-1}-1\atop d_{r-i}-1}\right]_q \times\left[{d_{r-i}\atop d_{r-i+1}}\right]_q \cdots \left[{d_{r-2}\atop d_{r-1}}\right]_q.
\end{equation}

Here, for all $1 \leq j \leq r-1$, $d_j$ represents the size of the larger side of the Durfee square (resp. rectangle) $D_j$ (resp. $D'_j$).

Simplifying the $q$-binomial coefficients, the generating function in~\eqref{eq:D_qbinom} can be rewritten as:
\begin{equation}\label{eq:D_qserie}
\sum_{d_1\geq\dots\geq d_{r-1}\geq0}\frac{q^{d_1^2+\dots+d_{r-1}^2-d_1-\dots-d_{r-i}}}{(q)_{d_1-d_2}\dots(q)_{d_{r-2}-d_{r-1}}(q)_{d_{r-1}}}(1-q^{d_{r-i}}),
\end{equation}
which is again simpler than~\eqref{eq:B_qserie}. It should be possible to prove a weaker version of Theorem~\ref{th:Durfee_Bottom} analytically by showing that~\eqref{eq:B_qserie} and~\eqref{eq:D_qserie} are equal; however it did not seem obvious to us how it could be done, so we looked for a combinatorial proof instead, which also has the advantage of giving more insight on the different types of dissections.

Thus what is left to do in order to prove the conjecture is showing that~\eqref{eq:D_qserie} equals the generating function for partitions in $\mathcal{A}_{r,i}$ or $\mathcal{E}_{r,i}$.

In the case of only squares $(i=r)$, the partitions in $\mathcal{A}_{r,r}$ and $\mathcal{D}_{r,r}$ are the same by definition. In the case of only rectangles ($i=1$), there is a simple bijection between $\mathcal{A}_{r,1}$ and $\mathcal{D}_{r,1}$ by rotating the horizontal Durfee rectangles in  $\mathcal{A}_{r,1}$ by $90$ degrees and thus obtaining partitions in $\mathcal{D}_{r,1}$, and vice versa. However this simple bijection does not work for other values of $i$, as some problems can appear at the transition between squares and rectangles, and because Andrews' Durfee dissection for $\mathcal{A}_{r,i}$ starts with squares and ends with rectangles while ours for $\mathcal{D}_{r,i}$ does the contrary. 

We found a more complicated bijection in the particular case $i=r-1$, given in Section~\ref{sec:bij}. The question of finding a bijection between  $\mathcal{A}_{r,i}$ and $\mathcal{D}_{r,i}$ in the general case still eludes us.

Therefore our proof of Conjecture~\ref{conj:Pooneh_combinatorial} will actually consist in showing that the generating function~\eqref{eq:D_qserie} of $\mathcal{D}_{r,i}$ equals the infinite product which is the generating function for $\mathcal{E}_{r,i}$. 
This follows from~\eqref{AGP}: indeed, the right-hand side of~\eqref{AGP} is the generating series for $\mathcal{E}_{r,r-i}$ obtained by taking $r-i$ instead of $i$ in the right-hand side of~\eqref{eq:AGri}, while the left-hand side corresponds to the generating series of $\mathcal{D}_{r,r-i}$ obtained by taking $r-i$ instead of $i$ in~\eqref{eq:D_qserie}. 

This shows that Conjecture~\ref{conj:Pooneh_combinatorial} (and therefore Conjecture~\ref{conj:Pooneh_original}) is an immediate consequence of~\eqref{AGP} (and therefore Theorem~\ref{thm:bressoud3.3}) and Theorem~\ref{th:Durfee_Bottom}.

\section{Combinatorial connection between the conjecture and Durfee squares/rectangles}
\label{sec:combi_Durf_Bot}
In this section, we prove Theorem \ref{th:Durfee_Bottom} combinatorially, i.e. we prove that the partitions in $\mathcal{B}_{r,i}$ and $\mathcal{D}_{r,i}$ are exactly the same.

First, introduce some more notation which can be seen in Figure \ref{fig:bottom_durfee_squares}.
For $1 \leq j \leq r-1$, let $\mu_j$ denote the sub-partition to the right of the $j$-th Durfee square/rectangle.
For all $1 \leq j \leq i-1$, let $b_j$ (resp. $d_j$) denote the size of the bottom square (resp. Durfee square) $B_j$ (resp. $D_j$). For all $i \leq j \leq r-1$, let $b'_j$ (resp. $d'_j$) denote the size of the smaller side of the bottom rectangle (resp. Durfee rectangle) $B'_j$ (resp. $D'_j$). The primes are there to remind us whether we are in the case of a square or a rectangle.

Moreover, assuming that the bottom of the smallest part of the partition is of height $0$, we denote for all $1 \leq j \leq r-1$ by $h^B_j$ (resp. $h^D_j$) the height of the top of the $j$-th bottom square/rectangle (resp. Durfee square/rectangle) starting from the bottom. Therefore, we have for all $1 \leq j \leq r-1$,
\begin{align*}
h^B_j= \tilde{b}_1 +  \tilde{b}_2 + \dots + \tilde{b}_j,\\
h^D_j= \tilde{d}_{r-1} + \tilde{d}_{r-2} + \dots + \tilde{d}_{r-j}.
\end{align*}
where $\tilde{b}_k$ (resp. $\tilde{d}_k$) equals $b_k$ (resp. $d_k$) when $1 \leq k \leq i-1$ and $b'_k$ (resp. $d'_k$) when $i \leq k \leq r-1$.

We also take the convention that $h^D_r=+\infty.$

\medskip

We also mention a simple lemma from set theory, which will simplify our proofs in this section.

\begin{Lemma}
\label{lem:disjoint_union}
Let $S$ be a set which can be written in two different ways as a disjoint union
$$S = \bigsqcup_{n \in \N} S_n = \bigsqcup_{n \in \N} S'_n,$$
such that for all $n \in \N$, $S_n \subseteq S'_n$. Then for all $n \in \N$, $S_n = S'_n$.
\end{Lemma}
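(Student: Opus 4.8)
The statement to prove is the set-theoretic Lemma~\ref{lem:disjoint_union}: if a set $S$ decomposes in two ways as disjoint unions $S=\bigsqcup_{n\in\N}S_n=\bigsqcup_{n\in\N}S'_n$ with $S_n\subseteq S'_n$ for every $n$, then $S_n=S'_n$ for every $n$.

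\medskip

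The plan is to argue by contradiction, or rather directly, by showing that no element can lie in $S'_n\setminus S_n$. First I would fix an arbitrary index $n\in\N$ and an arbitrary element $x\in S'_n$; it suffices to show $x\in S_n$, since the reverse inclusion $S_n\subseteq S'_n$ is given by hypothesis. Because $S=\bigsqcup_{m\in\N}S_m$ is a partition of $S$ and $x\in S'_n\subseteq S=\bigsqcup_m S_m$, there is a \emph{unique} index $m$ with $x\in S_m$. I would then invoke the assumption $S_m\subseteq S'_m$, which gives $x\in S'_m$. But $x\in S'_n$ as well, and since $S=\bigsqcup_m S'_m$ is also a disjoint union, the sets $S'_n$ and $S'_m$ can share the element $x$ only if $n=m$. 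Hence $m=n$, so $x\in S_n$, which is exactly what we wanted.

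\medskip

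Writing this out, for each fixed $n$ the inclusion $S'_n\subseteq S_n$ follows from the displayed chain of implications, and combined with the hypothesis $S_n\subseteq S'_n$ this yields $S_n=S'_n$. Since $n$ was arbitrary, the conclusion holds for all $n\in\N$.

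\medskip

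There is essentially no obstacle here: the only subtlety is to use disjointness on \emph{both} sides of the equation. The first disjoint union is needed to locate the unique block $S_m$ containing $x$, and the second is needed to conclude $n=m$ from $x\in S'_n\cap S'_m$. The hypothesis $S_n\subseteq S'_n$ is what bridges the two decompositions. I expect the proof to be a short, purely formal paragraph; the main point worth stating explicitly is why the two occurrences of disjointness are each indispensable.
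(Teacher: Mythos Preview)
Your proof is correct. The paper does not actually give a proof of this lemma: it simply states it as ``a simple lemma from set theory'' and moves on, so there is no argument to compare against; your short direct argument using disjointness on both sides is exactly the standard justification.
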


\subsection{The case with only squares: $\mathcal{B}_{r,r}=\mathcal{D}_{r,r}$}
We are now ready to start our proof. Let us begin by showing that when there are only squares (case $i=r$), $\mathcal{B}_{r,r}=\mathcal{D}_{r,r}$. This is the simplest case, and it will be useful in the proof of the general case.

Figure \ref{fig:all_squares} shows the successive Durfee squares of a partition, together with its successive bottom squares, and the corresponding heights.

\begin{figure}[H]
\includegraphics[width=0.9\textwidth]{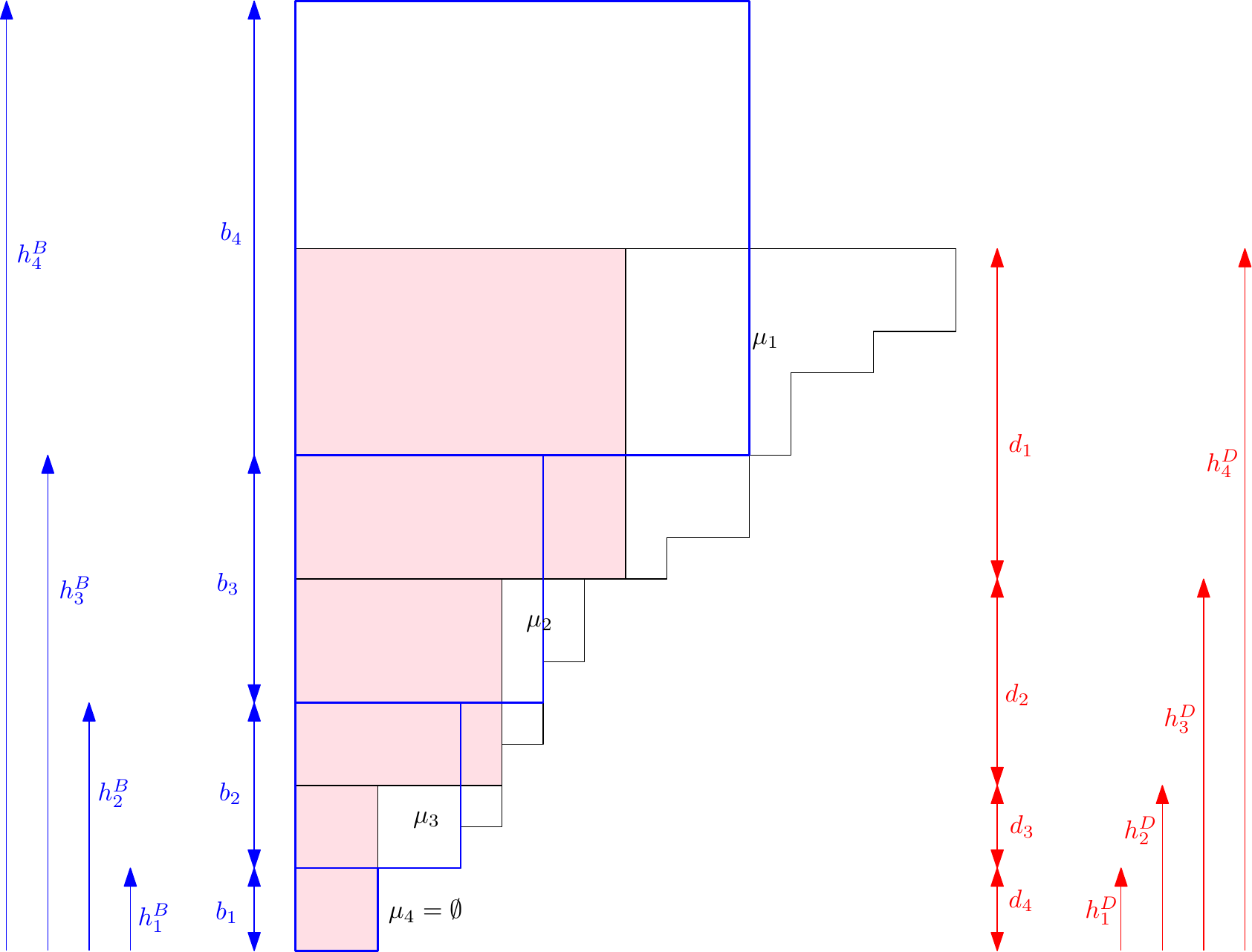}
\caption{Successive Durfee squares (in pink) and bottom squares (in blue) of a partition}
\label{fig:all_squares}
\end{figure}

Let us start with a proposition about the heights.

\begin{Proposition}
\label{prop:height_squares}
Let $\lambda$ be a partition having exactly $r-1$ non-empty Durfee squares. Let us consider the bottom dissection with only squares as well. Then for all $1 \leq j \leq r-1$, we have
$$h^D_j \leq h^B_j < h^D_{j+1}.$$
Moreover we have the equality $h^D_j = h^B_j$ if and only if the partitions $\mu_{r-1}, \dots , \mu_{r-j}$ have strictly less than $d_{r-1}, \dots, d_{r-j}$ parts, respectively.
\end{Proposition}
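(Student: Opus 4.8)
My plan is to prove both inequalities at once by induction on $j$, the whole argument resting on a single geometric observation: the top edge of the $j$-th bottom square lands inside the horizontal slab occupied by the $(j+1)$-th Durfee square counted from the bottom. First I would record the structural fact that forces everything to line up. Since $\lambda$ has exactly $r-1$ nonempty Durfee squares, the square below $D_{r-1}$ is empty, hence the sub-partition below $D_{r-1}$ is empty, so the Durfee squares exhaust all the rows: $d_1+\dots+d_{r-1}=s$. In particular $h^D_{r-1}=s$, and the slabs $[h^D_j,h^D_{j+1})$, of thickness $d_{r-j-1}$, are precisely the row-ranges of the successive squares $D_{r-j-1}$.

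I would set $h^B_0=h^D_0=0$ and take as induction hypothesis $h^D_j\le h^B_j<h^D_{j+1}$ (vacuous for $j=0$ since $h^D_1=d_{r-1}\ge 1$). The crux is to pinch the size $b_{j+1}$ of the next bottom square, which by definition is the smallest part lying strictly above $B_j$, namely $b_{j+1}=\lambda_{s-h^B_j}$. The induction hypothesis places the row index $s-h^B_j$ in the range $\{s-h^D_{j+1}+1,\dots,s-h^D_j\}$, i.e. inside the Durfee square $D_{r-j-1}$. Two elementary facts about Durfee squares then bound $b_{j+1}$ from both sides: a row inside a Durfee square of size $d_{r-j-1}$ has length $\ge d_{r-j-1}$, while a row lying weakly below $D_{r-j-2}$ has length at most the part immediately below $D_{r-j-2}$, which by the defining property of a Durfee square is $\le d_{r-j-2}$. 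This yields $d_{r-j-1}\le b_{j+1}\le d_{r-j-2}$.

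Combining the pinch with the induction hypothesis closes both halves in one line each:
\[
h^B_{j+1}=h^B_j+b_{j+1}\ge h^D_j+d_{r-j-1}=h^D_{j+1},\qquad h^B_{j+1}<h^D_{j+1}+d_{r-j-2}=h^D_{j+2}.
\]
The boundary cases are absorbed by the convention $h^D_r=+\infty$, which makes the upper bound automatic at the last step $j=r-2$ (where $d_0$ is undefined), and by the observation that $h^B_{r-1}\ge h^D_{r-1}=s$ together with $h^B_{r-1}\le s$ forces $h^B_{r-1}=s$, so the bottom dissection terminates in exactly $r-1$ squares as well.

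For the equality clause I would track exactly when the lower bound is tight. The displayed computation shows $h^B_{j+1}=h^D_{j+1}$ iff both $h^B_j=h^D_j$ and $b_{j+1}=d_{r-j-1}$; and when $h^B_j=h^D_j$ the pinched row is precisely the bottom row of $D_{r-j-1}$, so $b_{j+1}=d_{r-j-1}$ means that bottom row does not extend past the square, which is equivalent to $\mu_{r-j-1}$ having strictly fewer than $d_{r-j-1}$ parts. Iterating this equivalence from $j$ down to $1$ gives that $h^D_j=h^B_j$ iff each of $\mu_{r-1},\dots,\mu_{r-j}$ has strictly fewer than $d_{r-1},\dots,d_{r-j}$ parts respectively. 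I expect the main obstacle to be essentially bookkeeping rather than depth: one must keep the two opposite indexing conventions consistent (Durfee squares numbered from the top, bottom squares and heights measured from the bottom) and verify carefully that the pinched row genuinely sits in the claimed slab at each step. Once the slab-containment is in hand, both inequalities and the equality criterion fall straight out of the definition of a Durfee square.
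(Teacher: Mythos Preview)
Your proof is correct and follows essentially the same inductive approach as the paper's: both argue by induction on $j$ and rest on the pinch $d_{r-j-1}\le b_{j+1}\le d_{r-j-2}$ coming from the definition of successive Durfee squares; you simply organise it more uniformly, treating the cases $h^B_j=h^D_j$ and $h^B_j>h^D_j$ together rather than splitting them as the paper does.

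One caveat about your closing aside: the claim $h^B_{r-1}\le s$ is not true in general (for $\lambda=(3)$ one has $r=2$, $s=1$, but $b_1=\lambda_1=3$, so $h^B_1=3>s$), so your deduction that $h^B_{r-1}=s$ and hence that the bottom dissection terminates in exactly $r-1$ squares does not go through as written. This remark is extraneous to the proposition itself, so it does not affect the correctness of your proof of the stated inequalities and the equality criterion.
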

\begin{proof}
We proceed by induction on $j$.

\begin{itemize}
\item
For $j$=1, the first bottom square $B_1$ starts at the bottom of the partition.

If the box directly to the right of the bottom-right of the last Durfee square $D_{r-1}$ is empty, then the bottom square $B_1$ coincides with $D_{r-1}$ and we have $h^D_1 = h^B_1$. This is the case in the example of Figure \ref{fig:all_squares}.

If it is not empty, i.e. if the smallest part of the partition is larger than $d_{r-1}$, i.e. if $\mu_{r-1}$ has $d_{r-1}$ parts, then the first bottom square $B_1$ will be larger than the last Durfee square $D_{r-1}$, so we have $h^D_1 < h^B_1$. However, for our partition to be well-defined, the size of the last part cannot exceed the size of the Durfee square $D_{r-2}$, so we have $h^B_1= b_1 \leq d_{r-2}$, and therefore $h^B_1 < d_{r-1}+d_{r-2}= h^D_{2}.$

\item
Now assume that the proposition is true for all $k \leq j-1$, and prove it for $j$.
By the induction hypothesis, we have $h^D_{j-1} \leq h^B_{j-1} < h^D_{j},$ with
$h^D_{j-1} = h^B_{j-1}$ if and only if the partitions $\mu_{r-1}, \dots , \mu_{r-j+1}$ have strictly less than $d_{r-1}, \dots, d_{r-j+1}$ parts, respectively.

If $h^D_{j-1} = h^B_{j-1}$, then if the box directly to the right of the bottom-right of $D_{r-j+1}$ is empty (i.e. $\mu_{r-j}$ has less than $d_{r-j}$ parts), the bottom square $B_j$ will coincide again with the Durfee square $D_{r-j}$ and we will have $h^D_j = h^B_j$. Otherwise, if this box is not empty, then, as in the second case for $j=1$, the bottom square $B_j$ will be larger than the Durfee square $D_{r-j}$ and we will have $h^B_j= h^B_{j-1}+b_j < h^D_{j-1}+d_{r-j}= h^D_{j}.$

Now let us treat the case where $h^D_{j-1} < h^B_{j-1} < h^D_{j}.$ In general, by definition of the successive Durfee squares, we have $d_{r-j} \leq b_{j} \leq d_{r-j-1}$. Combining this with $h^D_{j-1} < h^B_{j-1} < h^D_{j}$ gives 
$h^D_{j-1}+d_{r-j}< h^B_{j-1}+b_{j} < h^D_{j}+ d_{r-j-1}$, i.e.
$h^D_j < h^B_j < h^D_{j+1}.$
\end{itemize}
\end{proof}

We can now easily prove the following theorem, which is the particular case $i=r$ of Theorem \ref{th:Durfee_Bottom}.

\begin{Theorem}
\label{th:all_squares}
For all $r \geq 2$, $\mathcal{B}_{r,r}=\mathcal{D}_{r,r}$.
\end{Theorem}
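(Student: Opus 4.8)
The statement $\mathcal{B}_{r,r}=\mathcal{D}_{r,r}$ concerns partitions for which both dissections use only squares (the case $i=r$), and it asserts that the set of partitions whose bottom-square dissection exhausts the partition after at most $r-1$ squares coincides with the set whose Durfee-square dissection does the same. The key structural input is Proposition~\ref{prop:height_squares}, which compares the heights $h^D_j$ and $h^B_j$ of the successive Durfee and bottom squares and shows in particular that $h^D_{r-1}\leq h^B_{r-1}$. My plan is to use this height comparison together with the set-theoretic Lemma~\ref{lem:disjoint_union} rather than attempting a direct bijection on Young diagrams.

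\medskip

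First I would fix the grading by the integer $n$ being partitioned, so that both $\mathcal{B}_{r,r}$ and $\mathcal{D}_{r,r}$ decompose as disjoint unions over $n$ of their weight-$n$ parts; this is the setting in which Lemma~\ref{lem:disjoint_union} applies. The cleanest route is to show one inclusion and then invoke the lemma to upgrade it to equality. Concretely, I would argue that $\mathcal{B}_{r,r}\subseteq\mathcal{D}_{r,r}$ (or the reverse, whichever is easier to read off from the heights): a partition $\lambda$ lies in $\mathcal{D}_{r,r}$ exactly when its Durfee-square dissection terminates by level $r-1$, i.e. the row below $D_{r-1}$ is empty, which is equivalent to $\lambda$ having at most $r-1$ non-empty Durfee squares, and similarly $\lambda\in\mathcal{B}_{r,r}$ iff it has at most $r-1$ non-empty bottom squares. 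Using Proposition~\ref{prop:height_squares} with $j=r-1$, the inequality $h^D_{r-1}\leq h^B_{r-1}$ says that the top of the $(r-1)$-th bottom square sits at least as high as the top of the $(r-1)$-th Durfee square; since $h^D_r=+\infty$ by convention and the chain $h^D_j\leq h^B_j<h^D_{j+1}$ interleaves the two height sequences, having enough bottom squares to cover $\lambda$ forces enough Durfee squares to cover it, and conversely. This interleaving is precisely what converts ``terminates as a bottom dissection'' into ``terminates as a Durfee dissection.''

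\medskip

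The remaining care is in handling the total number of parts correctly. The sets $\mathcal{B}_{r,r}$ and $\mathcal{D}_{r,r}$ are cut out by the condition that a dissection using at most $r-1$ squares covers the whole partition, so I would phrase membership as: the height of the last (the $(r-1)$-th) square equals the number of parts $s$ of $\lambda$, equivalently the $r$-th square is empty. Then $h^B_{r-1}=s$ characterizes $\mathcal{B}_{r,r}$ and $h^D_{r-1}=s$ characterizes $\mathcal{D}_{r,r}$, and the chain $h^D_{r-1}\leq h^B_{r-1}<h^D_r=+\infty$ shows that if the bottom dissection reaches the top ($h^B_{r-1}=s$) then so does the Durfee dissection, giving one inclusion directly. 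Lemma~\ref{lem:disjoint_union}, applied to the grading by $n$ with $\mathcal{D}_{r,r}\subseteq\mathcal{B}_{r,r}$ (say) on each graded piece, then upgrades the inclusion to equality without having to prove the reverse containment by hand.

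\medskip

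The main obstacle I anticipate is not the height inequality itself, which is handed to us by Proposition~\ref{prop:height_squares}, but correctly translating the geometric ``termination'' condition defining each set into a clean statement about the final height $h^B_{r-1}$ versus $h^D_{r-1}$, while respecting the paper's convention that one may keep drawing empty squares after exiting the partition. In particular one must be careful that ``having at most $r-1$ non-empty squares'' is genuinely equivalent to ``the $(r-1)$-th square reaches the top,'' and that the two dissections, though built from opposite ends of the diagram, are being compared at the correct index. Once that bookkeeping is pinned down, the argument is short: the inequality $h^D_{r-1}\leq h^B_{r-1}$ gives the inclusion and Lemma~\ref{lem:disjoint_union} finishes the proof.
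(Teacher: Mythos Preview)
Your use of Lemma~\ref{lem:disjoint_union} is circular. That lemma requires a single ambient set $S$ decomposed in two ways, $S=\bigsqcup_n S_n=\bigsqcup_n S'_n$, together with the inclusions $S_n\subseteq S'_n$. If you grade by the integer $n$ being partitioned and set $S_n=\mathcal{D}_{r,r}(n)$, $S'_n=\mathcal{B}_{r,r}(n)$, then $\bigsqcup_n S_n=\mathcal{D}_{r,r}$ and $\bigsqcup_n S'_n=\mathcal{B}_{r,r}$, and the hypothesis that these two unions coincide is exactly the statement you are trying to prove. The grading by weight gives you nothing here (it is not as though the weight-$n$ pieces were known to have equal cardinality in advance).

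The paper's proof applies the lemma to a different decomposition: it writes the set $\mathcal{P}^*$ of \emph{all} nonempty partitions as
\[
\mathcal{P}^* \;=\; \bigsqcup_{r\ge2}\bigl(\mathcal{D}_{r,r}\setminus\mathcal{D}_{r-1,r-1}\bigr)
\;=\; \bigsqcup_{r\ge2}\bigl(\mathcal{B}_{r,r}\setminus\mathcal{B}_{r-1,r-1}\bigr),
\]
which is legitimate because every partition has a well-defined exact number of nonempty Durfee (resp.\ bottom) squares. Proposition~\ref{prop:height_squares} is then used to show the layer-wise inclusion $\mathcal{D}_{r,r}\setminus\mathcal{D}_{r-1,r-1}\subseteq\mathcal{B}_{r,r}\setminus\mathcal{B}_{r-1,r-1}$, and now Lemma~\ref{lem:disjoint_union} genuinely applies since both sides exhaust the same $\mathcal{P}^*$.

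There is also a directional error in your height argument. From $h^D_{r-1}\le h^B_{r-1}$ you cannot conclude that $h^B_{r-1}=s$ forces $h^D_{r-1}=s$; the inequality goes the wrong way. What does follow (and what the paper uses) is the opposite: if $\lambda$ has exactly $r-1$ Durfee squares, so that $s=h^D_{r-1}$, then $h^B_{r-1}\ge h^D_{r-1}=s$ forces $h^B_{r-1}=s$, while $h^B_{r-2}<h^D_{r-1}=s$ shows $\lambda\notin\mathcal{B}_{r-1,r-1}$. Note also that Proposition~\ref{prop:height_squares} is stated under the hypothesis that $\lambda$ has exactly $r-1$ nonempty Durfee squares, so the natural starting point is $\lambda\in\mathcal{D}_{r,r}\setminus\mathcal{D}_{r-1,r-1}$, not $\lambda\in\mathcal{B}_{r,r}$.
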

\begin{proof}
We prove that for all $r \geq 2$,
\begin{equation}
\label{eq:set_equality_squares}
\mathcal{D}_{r,r}\setminus \mathcal{D}_{r-1,r-1} = \mathcal{B}_{r,r}\setminus \mathcal{B}_{r-1,r-1}.
\end{equation}
The set $\mathcal{D}_{r,r}\setminus \mathcal{D}_{r-1,r-1}$ is the set of partitions having exactly $r-1$ successive non-empty Durfee squares, while $\mathcal{B}_{r,r}\setminus \mathcal{B}_{r-1,r-1}$ is the set of partitions such that in their bottom square dissection, the row above the $(r-1)$-th bottom square is empty, but not the one above the $(r-2)$-th bottom square.

Let $\lambda$ be a partition in $\mathcal{D}_{r,r}\setminus \mathcal{D}_{r-1,r-1}$. By Proposition \ref{prop:height_squares}, we have 
$$ h^B_{r-2} <h^D_{r-1} \leq h^B_{r-1} < h^D_{r} = +\infty.$$
The number of rows of the partition is given by $h^D_{r-1}$, so the row above $B_{r-1}$ is indeed empty, while the row above $B_{r-2}$ still belongs to the partition. Therefore $\lambda$ belongs to $\mathcal{B}_{r,r}\setminus \mathcal{B}_{r-1,r-1}$ as well.

We have proved that for all $r \geq 2$,
$\mathcal{D}_{r,r}\setminus \mathcal{D}_{r-1,r-1} \subseteq \mathcal{B}_{r,r}\setminus \mathcal{B}_{r-1,r-1}.$
But we also have that the set $\mathcal{P^*}$ of all non-empty partitions can be written as disjoint unions
$$\mathcal{P^*} =  \bigsqcup_{r \geq 2} \mathcal{D}_{r,r}\setminus \mathcal{D}_{r-1,r-1} = \bigsqcup_{r \geq 2} \mathcal{B}_{r,r}\setminus \mathcal{B}_{r-1,r-1},$$
so by Lemma \ref{lem:disjoint_union}, we actually have for all $r$, $\mathcal{D}_{r,r}\setminus \mathcal{D}_{r-1,r-1} = \mathcal{B}_{r,r}\setminus \mathcal{B}_{r-1,r-1}.$

Using the fact that $\mathcal{D}_{r,r}=(\mathcal{D}_{r,r}\setminus \mathcal{D}_{r-1,r-1}) \sqcup \mathcal{D}_{r-1,r-1} $, an immediate induction on $r$ proves the desired result.
\end{proof}

\subsection{The case with only rectangles: $\mathcal{B}_{r,1}=\mathcal{D}_{r,1}$}
We now turn to the situation in which only rectangles appear (case $i=1$), which will also be useful in the proof of the general case. It is very similar to the case of only squares, so we will only sketch the proofs which work in the exact same way.

In a partition $\lambda$ having exactly $r-1$ non-empty Durfee rectangles, we name again $\mu_1, \dots, \mu_{r-1}$ the partitions to the right of the Durfee rectangles $D'_1, \dots , D'_{r-1}.$ As before, we start with a proposition about the heights.

\begin{Proposition}
\label{prop:height_rectangles}
Let $\lambda$ be a partition having exactly $r-1$ non-empty Durfee rectangles (and no squares). Let us consider the bottom dissection with only rectangles. Then for all $1 \leq j \leq r-1$, we have
$$h^D_j \leq h^B_j < h^D_{j+1}.$$
Moreover we have the equality $h^D_j = h^B_j$ if and only if the partitions $\mu_{r-1}, \dots , \mu_{r-j}$ have strictly less than $d'_{r-1}, \dots, d'_{r-j}$ parts, respectively.
\end{Proposition}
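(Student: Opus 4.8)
The plan is to prove Proposition~\ref{prop:height_rectangles} by induction on $j$, mirroring exactly the structure of the proof of Proposition~\ref{prop:height_squares}, since the only change is that squares of size $d \times d$ are replaced by horizontal rectangles of size $d \times (d-1)$ (and bottom squares by bottom rectangles of size $\lambda_s \times (\lambda_s-1)$). The key observation is that the inductive argument only used two ingredients: the interlacing $d'_{r-j} \leq b'_{j} \leq d'_{r-j-1}$ coming from the definition of successive Durfee/bottom dissections, and the fact that a bottom rectangle coincides with the corresponding Durfee rectangle precisely when the relevant $\mu$ does not reach full length. Both survive verbatim when ``size of a square'' is read as ``height (larger side) of a rectangle''.

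First I would establish the base case $j=1$. The first bottom rectangle $B'_1$ starts at the bottom of $\lambda$, with larger side $\lambda_s = b'_1$, so its height contribution is $b'_1$. If the box directly to the right of the bottom-right corner of the last Durfee rectangle $D'_{r-1}$ is empty, then $B'_1$ coincides with $D'_{r-1}$ and $h^D_1 = h^B_1$; this happens exactly when $\mu_{r-1}$ has strictly fewer than $d'_{r-1}$ parts. Otherwise $\mu_{r-1}$ has $d'_{r-1}$ parts, the smallest part exceeds $d'_{r-1}$, so $B'_1$ is strictly taller than $D'_{r-1}$ and $h^D_1 < h^B_1$; but well-definedness of the partition forces $b'_1 \leq d'_{r-2}$, whence $h^B_1 = b'_1 \leq d'_{r-2} < d'_{r-1}+d'_{r-2} = h^D_2$, giving the strict upper bound.

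For the inductive step, assuming the statement for all $k \leq j-1$, I would split into the two cases $h^D_{j-1} = h^B_{j-1}$ and $h^D_{j-1} < h^B_{j-1}$, just as in Proposition~\ref{prop:height_squares}. In the equality case, either the box to the right of the bottom-right of $D'_{r-j+1}$ is empty (so $B'_j$ coincides with $D'_{r-j}$ and $h^D_j = h^B_j$, with the length condition on $\mu_{r-j}$ propagating), or it is not (so $B'_j$ is strictly taller and $h^B_j = h^B_{j-1} + b'_j < h^D_{j-1} + d'_{r-j} = h^D_j$). In the strict case I would combine the induction hypothesis $h^D_{j-1} < h^B_{j-1} < h^D_j$ with the interlacing $d'_{r-j} \leq b'_j \leq d'_{r-j-1}$ to obtain $h^D_{j-1} + d'_{r-j} < h^B_{j-1} + b'_j < h^D_j + d'_{r-j-1}$, i.e. $h^D_j < h^B_j < h^D_{j+1}$.

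I do not anticipate a genuine obstacle here: the argument is structurally identical to the square case and the paper itself signals this, noting that the rectangle case ``is very similar to the case of only squares.'' The one point that deserves a moment of care is the convention allowing bottom and Durfee rectangles of size $1 \times 0$ when a part equals $1$; I would check that the interlacing inequality $d'_{r-j} \leq b'_j \leq d'_{r-j-1}$ and the ``coincidence iff $\mu$ is short'' characterization remain valid in this degenerate boundary case, so that the height bookkeeping using the $\tilde{d}_k, \tilde{b}_k$ notation goes through without exception.
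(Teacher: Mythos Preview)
Your proposal is correct and follows exactly the approach the paper takes: the paper's own proof of Proposition~\ref{prop:height_rectangles} consists of the single sentence that it is ``exactly the same as the proof of Proposition~\ref{prop:height_squares}, but with `squares' replaced by `rectangles' and $d_k$ (resp.\ $b_k$) replaced by $d'_k$ (resp.\ $b'_k$) for all $k$,'' which is precisely what you have written out in detail. Your extra remark about the $1\times 0$ degenerate case is a reasonable sanity check but not something the paper addresses separately.
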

\begin{proof}
The proof is exactly the same as the proof of Proposition \ref{prop:height_squares}, but with ``squares'' replaced by ``rectangles'' and $d_k$ (resp. $b_k$) replaced by $d'_k$ (resp. $b'_k$) for all $k$.
\end{proof}

As in the previous section, we use this proposition to prove the following theorem, which is the particular case $i=1$ of Theorem \ref{th:Durfee_Bottom}.

\begin{Theorem}
\label{th:all_rectangles}
For all $r \geq 2$, $\mathcal{B}_{r,1}=\mathcal{D}_{r,1}$.
\end{Theorem}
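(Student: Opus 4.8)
The plan is to mirror exactly the argument used for Theorem~\ref{th:all_squares}, replacing Proposition~\ref{prop:height_squares} by its rectangular analogue Proposition~\ref{prop:height_rectangles} throughout. The engine of the entire proof is the chain of height inequalities $h^D_j \leq h^B_j < h^D_{j+1}$, which Proposition~\ref{prop:height_rectangles} provides verbatim in the rectangles-only setting; since this is the only place where the square-versus-rectangle distinction enters the combinatorics, nothing else in the argument needs to change.

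Concretely, I would first establish the difference-set identity
$$\mathcal{D}_{r,1}\setminus \mathcal{D}_{r-1,1} = \mathcal{B}_{r,1}\setminus \mathcal{B}_{r-1,1}.$$
The left-hand set consists of the partitions having exactly $r-1$ non-empty Durfee rectangles, while the right-hand set consists of the partitions whose $0$-bottom dissection (only rectangles) has an empty row above the $(r-1)$-th bottom rectangle but a non-empty row above the $(r-2)$-th one. For a partition $\lambda$ in the left-hand set, Proposition~\ref{prop:height_rectangles} gives
$$h^B_{r-2} < h^D_{r-1} \leq h^B_{r-1} < h^D_r = +\infty,$$
and since the total number of rows of $\lambda$ equals $h^D_{r-1}$, the row above $B'_{r-1}$ is empty whereas the row above $B'_{r-2}$ is not. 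This yields the inclusion $\mathcal{D}_{r,1}\setminus \mathcal{D}_{r-1,1} \subseteq \mathcal{B}_{r,1}\setminus \mathcal{B}_{r-1,1}$.

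To promote this inclusion to an equality, I would observe that the set $\mathcal{P}^*$ of all non-empty partitions decomposes as the two disjoint unions
$$\mathcal{P}^* = \bigsqcup_{r \geq 2} \left(\mathcal{D}_{r,1}\setminus \mathcal{D}_{r-1,1}\right) = \bigsqcup_{r \geq 2} \left(\mathcal{B}_{r,1}\setminus \mathcal{B}_{r-1,1}\right),$$
so that Lemma~\ref{lem:disjoint_union} forces the piecewise inclusion to be a piecewise equality. An induction on $r$ using $\mathcal{D}_{r,1}=(\mathcal{D}_{r,1}\setminus \mathcal{D}_{r-1,1}) \sqcup \mathcal{D}_{r-1,1}$ (and the analogous decomposition for $\mathcal{B}_{r,1}$) then delivers $\mathcal{B}_{r,1}=\mathcal{D}_{r,1}$.

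The only point needing a little care—rather than a genuine obstacle—is the bookkeeping around the degenerate $1 \times 0$ rectangles produced by a smallest part equal to $1$: one must confirm that Proposition~\ref{prop:height_rectangles} and the height formulas remain valid under the stated conventions (Durfee rectangles of size $1\times 0$ being non-empty, bottom rectangles of size $1 \times 0$ allowed). As these conventions are already built into the definitions and into Proposition~\ref{prop:height_rectangles}, I expect no difficulty, and the argument is otherwise a line-by-line transcription of the squares case.
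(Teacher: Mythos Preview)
Your approach is exactly the paper's, and the core of the argument is correct. There is one concrete slip, however: your disjoint-union step asserts
\[
\mathcal{P}^* = \bigsqcup_{r \geq 2} \bigl(\mathcal{D}_{r,1}\setminus \mathcal{D}_{r-1,1}\bigr) = \bigsqcup_{r \geq 2} \bigl(\mathcal{B}_{r,1}\setminus \mathcal{B}_{r-1,1}\bigr),
\]
but this is false. A partition whose smallest part equals $1$ lies in neither union: in the $0$-bottom dissection the first bottom rectangle is $1\times 0$ and one never moves up, so $\lambda\notin\mathcal{B}_{r,1}$ for any $r$; on the Durfee side, once only $1$'s remain below, every further Durfee rectangle is $1\times 0$ and the row below is never empty, so $\lambda\notin\mathcal{D}_{r,1}$ for any $r$. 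Hence the common ambient set is not $\mathcal{P}^*$ but $\mathcal{P}^*_1$, the set of non-empty partitions with all parts $>1$. This is precisely the ``slight subtlety'' the paper singles out, and once you replace $\mathcal{P}^*$ by $\mathcal{P}^*_1$ your application of Lemma~\ref{lem:disjoint_union} and the final induction go through verbatim. Your last paragraph flags the $1\times 0$ issue but misdiagnoses where it bites: it is not the validity of Proposition~\ref{prop:height_rectangles} that is at stake (that proposition only applies to partitions with exactly $r-1$ non-empty Durfee rectangles, which automatically excludes the problematic ones), but rather the identification of the ambient set in the disjoint-union argument.
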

\begin{proof}
We prove that for all $r \geq 2$,
\begin{equation}
\label{eq:set_equality_rectangles}
\mathcal{D}_{r,1}\setminus \mathcal{D}_{r-1,1} = \mathcal{B}_{r,1}\setminus \mathcal{B}_{r-1,1}.
\end{equation}
The set $\mathcal{D}_{r,1}\setminus \mathcal{D}_{r-1,1}$ is the set of partitions having exactly $r-1$ successive non-empty Durfee rectangles, while $\mathcal{B}_{r,1}\setminus \mathcal{B}_{r-1,1}$ is the set of partitions such that in their bottom rectangle dissection, the row above the $(r-1)$-th bottom rectangle is empty, but not the one above the $(r-2)$-th bottom rectangle.

Let $\lambda$ be a partition in $\mathcal{D}_{r,1}\setminus \mathcal{D}_{r-1,1}$. By Proposition \ref{prop:height_rectangles}, we have 
$$ h^B_{r-2} <h^D_{r-1} \leq h^B_{r-1} < h^D_{r} = +\infty.$$
The number of rows of the partition is given by $h^D_{r-1}$, so the row above $B'_{r-1}$ is indeed empty, while the row above $B'_{r-2}$ still belongs to the partition. Therefore $\lambda$ belongs to $\mathcal{B}_{r,1}\setminus \mathcal{B}_{r-1,1}$ as well.

We have proved that for all $r \geq 2$,
$\mathcal{D}_{r,r}\setminus \mathcal{D}_{r-1,r-1} \subseteq \mathcal{B}_{r,r}\setminus \mathcal{B}_{r-1,r-1}.$ We now use Lemma \ref{lem:disjoint_union} again to prove the reverse inclusion.

Here there is a slight subtlety, as a partition has a well-defined Durfee (resp. bottom) dissection with only rectangles if and only if the smallest part of the partition is $>1$. Indeed, the only Durfee (resp. bottom) rectangle that a part $1$ fits in is a rectangle of length $1$ and height $0$, which is allowed in our definitions, but never permits to move up in the partition unless squares are allowed (a square of size $1 \times 1$ increases the height by $1$ and allows us to move up).

Thus we have the disjoint unions
$$\mathcal{P}^*_1 =  \bigsqcup_{r \geq 2} \mathcal{D}_{r,1}\setminus \mathcal{D}_{r-1,1} = \bigsqcup_{r \geq 2} \mathcal{B}_{r,1}\setminus \mathcal{B}_{r-1,1},$$
where $\mathcal{P}^*_1$ is the set of all partitions with parts $>1$ .
By Lemma \ref{lem:disjoint_union}, we actually have for all $r$, $\mathcal{D}_{r,1}\setminus \mathcal{D}_{r-1,1} = \mathcal{B}_{r,r}\setminus \mathcal{B}_{r-1,1}.$

Using the fact that $\mathcal{D}_{r,1}=(\mathcal{D}_{r,1}\setminus \mathcal{D}_{r-1,1}) \sqcup \mathcal{D}_{r-1,1} $, an immediate induction on $r$ proves the desired result.
\end{proof}

\subsection{The general case}
In this subsection, we use our previous results on squares and rectangles to prove the general case of Theorem \ref{th:Durfee_Bottom}.

Our strategy is the same as in the proofs of Theorems \ref{th:all_squares} and \ref{th:all_rectangles}, but there are some more technicalities, so we cut the proof in two steps. We start with the following.

\begin{Proposition}
\label{prop:general_case}
Let $r \geq 2$ and $1 < i < r$ be two integers. We have
$$\mathcal{D}_{r,i}\setminus \mathcal{D}_{r-1,i-1} \subseteq \mathcal{B}_{r,i}\setminus \mathcal{B}_{r-1,i-1}.$$
\end{Proposition}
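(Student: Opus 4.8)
The plan is to mimic the structure of the proofs of Theorems~\ref{th:all_squares} and~\ref{th:all_rectangles}, reducing the general mixed case to the two already-settled pure cases via a result on heights. First I would unwind the definitions: a partition $\lambda$ lies in $\mathcal{D}_{r,i}\setminus\mathcal{D}_{r-1,i-1}$ precisely when its $(r-i)$-Durfee dissection (that is, $r-i$ horizontal rectangles $D'_1,\dots,D'_{r-i}$ followed by squares $D_{r-i+1},\dots$) has exactly $r-1$ non-empty pieces, so that the row below $D_{r-1}$ is empty but the row below $D_{r-2}$ is not. Analogously, $\mathcal{B}_{r,i}\setminus\mathcal{B}_{r-1,i-1}$ consists of partitions whose $(i-1)$-bottom dissection has exactly $r-1$ non-empty pieces. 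The goal is then exactly to show the inclusion, i.e.\ that the Durfee condition on the top forces the bottom condition, which by the height formulas amounts to locating the total number of rows $h^D_{r-1}$ of $\lambda$ strictly between $h^B_{r-2}$ and $h^D_r=+\infty$.

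The central step is to prove, for a partition with exactly $r-1$ non-empty Durfee squares/rectangles in the mixed dissection, the chain of height inequalities
$$h^D_j \leq h^B_j < h^D_{j+1}$$
for all $1\leq j\leq r-1$, exactly as in Propositions~\ref{prop:height_squares} and~\ref{prop:height_rectangles}, but now with the dissection switching from rectangles (sizes $d'_\ell, b'_\ell$) to squares (sizes $d_\ell, b_\ell$) at the index $i$. I would run the same induction on $j$: the base case compares the first bottom piece with the last Durfee piece $D_{r-1}$ (a square, since $i<r$ means the top-most pieces counted from the bottom are squares), and the inductive step uses the nesting inequality $\tilde d_{r-j}\leq \tilde b_j\leq \tilde d_{r-j-1}$ coming from the definition of successive dissections, together with the induction hypothesis $h^D_{j-1}\leq h^B_{j-1}<h^D_j$, to deduce the shifted inequality. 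Once the chain is established, applying it at $j=r-2$ and $j=r-1$ gives $h^B_{r-2}<h^D_{r-1}\leq h^B_{r-1}<h^D_r=+\infty$; since $h^D_{r-1}$ is the total number of rows, the row above $B_{r-2}$ (resp.\ $B'_{r-2}$) is still in $\lambda$ while the row above $B_{r-1}$ (resp.\ $B'_{r-1}$) is empty, which is exactly the condition placing $\lambda$ in $\mathcal{B}_{r,i}\setminus\mathcal{B}_{r-1,i-1}$.

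The main obstacle I expect is \emph{bookkeeping at the transition index} $\ell=i$, where the dissection changes from rectangles to squares. In the pure cases the inequality $\tilde d_{r-j}\leq \tilde b_j\leq \tilde d_{r-j-1}$ relates objects of the same type, but here one of the three pieces may be a rectangle while another is a square, so I would need to check separately that the defining nesting inequalities still hold across this boundary (for instance that a bottom square/rectangle sitting just above a Durfee rectangle/square respects the expected size bounds). A related subtlety is the counting of heights: since rectangles of size $d'\times(d'-1)$ contribute height $d'$ from the bottom but only $d'-1$ columns, I must be careful that the formulas $h^B_j=\tilde b_1+\dots+\tilde b_j$ and $h^D_j=\tilde d_{r-1}+\dots+\tilde d_{r-j}$ remain the correct accounting of rows, and that the switch at index $i$ is consistently placed in both dissections (the $j$-th bottom piece counting from the bottom corresponds to the $(r-j)$-th Durfee piece counting from the top). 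Modulo verifying these transition inequalities case by case, the induction and the final two-step inequality go through essentially verbatim from the pure cases.
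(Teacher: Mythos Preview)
Your proposal contains a genuine gap in the very first step: you mischaracterize the set $\mathcal{B}_{r,i}\setminus\mathcal{B}_{r-1,i-1}$. You write that it ``consists of partitions whose $(i-1)$-bottom dissection has exactly $r-1$ non-empty pieces,'' but this is false. By definition $\mathcal{B}_{r,i}$ is read off the $(i-1)$-bottom dissection, while $\mathcal{B}_{r-1,i-1}$ is read off the $(i-2)$-bottom dissection (one fewer square before the rectangles start). In the pure cases of Theorems~\ref{th:all_squares} and~\ref{th:all_rectangles} this issue is invisible because both $\mathcal{B}_{r,r},\mathcal{B}_{r-1,r-1}$ (resp.\ $\mathcal{B}_{r,1},\mathcal{B}_{r-1,1}$) use the same dissection type; in the mixed case they do not. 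Consequently, proving $h^B_{r-2}<h^D_{r-1}$ for the heights computed in the $(i-1)$-bottom dissection does \emph{not} directly yield $\lambda\notin\mathcal{B}_{r-1,i-1}$, and your final paragraph draws exactly this unjustified conclusion.

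The paper closes this gap with an extra comparison lemma: for any partition, the top of the $(r-2)$-th piece in the $(i-2)$-bottom dissection never exceeds the top of the $(r-2)$-th piece in the $(i-1)$-bottom dissection (because replacing the $(i-1)$-th square by a rectangle lowers the height by one, and all subsequent rectangles stay at least as low). Only after this monotonicity is established does the inequality $h^B_{r-2}\leq h^D_{r-1}$ in the $(i-1)$-dissection imply $\lambda\notin\mathcal{B}_{r-1,i-1}$. The paper also treats separately the degenerate partitions with $i-1$ parts equal to~$1$ at the bottom (where the Durfee rectangles are all $1\times 0$ and the height argument does not start cleanly), reducing that case to the already-proved Theorem~\ref{th:all_rectangles}. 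Your height-chain induction across the transition index is essentially the right engine, and the paper does exactly that (squares via Proposition~\ref{prop:height_squares} for $j\leq i-2$, the transition inequality $d_{r-i+1}\leq b_{i-1}\leq d'_{r-i}+1$ at $j=i-1$, then rectangles via Proposition~\ref{prop:height_rectangles}); what you are missing is the bridge between the two different bottom dissections.
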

\begin{proof}
First, describe the two sets we are considering.
The set $\mathcal{D}_{r,i}\setminus \mathcal{D}_{r-1,i-1}$ is the set of partitions with exactly $r-i$ non-empty Durfee rectangles followed by exactly $i-1$ non-empty Durfee squares. The partition of Figure \ref{fig:bottom_durfee_squares} shows a partition in $\mathcal{D}_{5,3}\setminus \mathcal{D}_{4,2}$.

On the other hand, $\mathcal{B}_{r,i}\setminus \mathcal{B}_{r-1,i-1}$ is the set of partitions such that in their $(i-1)$-bottom dissection, the row above $B'_{r-1}$ is empty, but such that in their $(i-2)$-bottom dissection, the row above $B'_{r-2}$ is not empty. In other words, if we draw $i-1$ bottom squares followed by $r-i$ bottom rectangles, we exit the partition, but not if we draw $i-2$ bottom squares followed by $r-i$ bottom rectangles.
The reader can check that the partition of Figure \ref{fig:bottom_durfee_squares} is also in $\mathcal{B}_{5,3}\setminus \mathcal{B}_{4,2}$.

\medskip
Let us start with a very particular case: the partition $\lambda^1:=1^{i-1}$. This partition belongs to $\mathcal{D}_{r,i}\setminus \mathcal{D}_{r-1,i-1}$ for all $r$, as the $(r-i)$-Durfee dissection consists of $r-i$ Durfee rectangles of size $1 \times 0$ followed by $i-1$ Durfee squares of size $1$.

Now we check that $\lambda^1$ also belongs to $\mathcal{B}_{r,i}\setminus \mathcal{B}_{r-1,i-1}.$ In the $(i-1)$-bottom dissection of $\lambda^1$, we start with $i-1$ bottom squares of size $1$, and the row above the last square is empty. So in particular for all $r$, the part above $B'_{r-1}$ is empty, so $\lambda^1 \in \mathcal{B}_{r,i}.$ However, for all $r$ if we draw the $(i-2)$-bottom dissection of $\lambda^1$, we will first have $(i-2)$ Durfee squares of size $1$, but then we will have infinitely many bottom rectangles of size $1 \times 0$ and never exit the partition, so $\lambda^1 \notin \mathcal{B}_{r-1,i-1}.$

More generally, any partition starting with $1^{i-1}$ belongs to $\mathcal{D}_{r,i}\setminus \mathcal{D}_{r-1,i-1}$ if and only if its restriction to its parts different from $1$ belongs to $\mathcal{D}_{r-i,1}$. In the same way, any partition starting with $1^{i-1}$ belongs to $\mathcal{B}_{r,i}\setminus \mathcal{B}_{r-1,i-1}$ if and only if its restriction to its parts different from $1$ belongs to $\mathcal{B}_{r-i,1}$. But we already proved in Theorem \ref{th:all_rectangles} that $\mathcal{D}_{r-i,1}=\mathcal{B}_{r-i,1}$. Thus any partition starting with $1^{i-1}$ belongs to $\mathcal{D}_{r,i}\setminus \mathcal{D}_{r-1,i-1}$ if and only if it belongs to $\mathcal{B}_{r,i}\setminus \mathcal{B}_{r-1,i-1}.$

\medskip
Let us now turn to the general case. Start by noticing that for any partition, the $(r-2)$-th bottom rectangle in the $(i-2)$-bottom dissection always ends lower than the $(r-2)$-th bottom square/rectangle in the $(i-1)$-bottom dissection. Indeed, the first $i-2$ bottom squares coincide in both dissections, but the $(i-1)$-th is a rectangle in the first case while it is a square in the second one. So the height of $B'_{i-1}$ in the $(i-2)$-bottom dissection is exactly one less than the height of $B_{i-1}$ in the $(i-1)$-bottom dissection. Then from this point on, all bottom rectangles $B'_k$ will remain lower in the $(i-2)$-bottom dissection than in the $(i-1)$-bottom dissection.

Figure \ref{fig:general1} shows the successive Durfee squares of a partition, together with its successive bottom squares, and the corresponding heights.

\begin{figure}[H]
\includegraphics[width=0.65\textwidth]{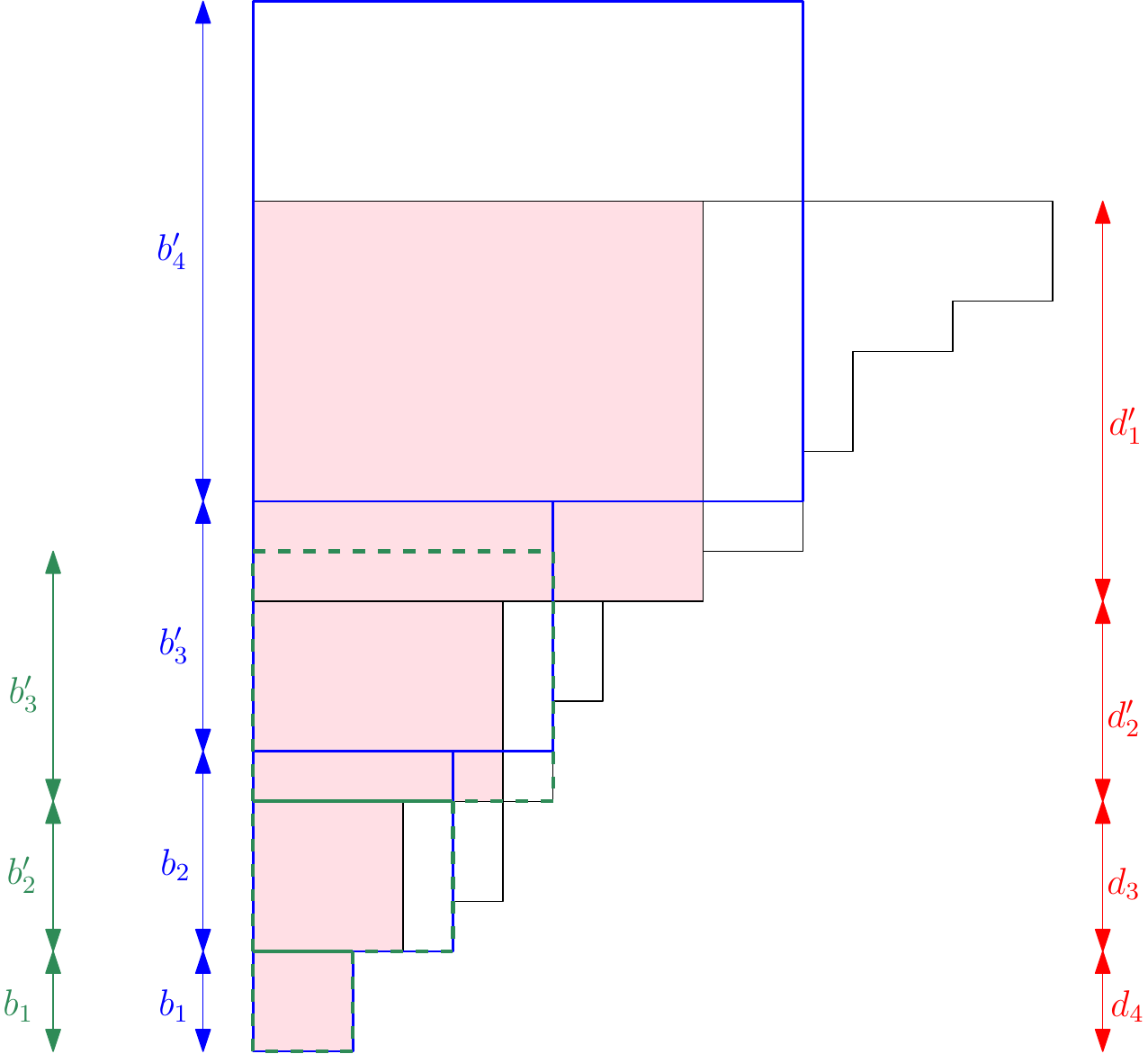}
\caption{$2$-Durfee dissection (in pink), $2$-bottom dissection (in blue), and $1$-bottom dissection (in green) of a partition}
\label{fig:general1}
\end{figure}
 
Thus if in a partition $\lambda$, the last row of the $(r-2)$-th bottom square/rectangle in the $(i-1)$-bottom dissection still belongs to the partition, i.e.
$$h^B_{r-2} \leq h^D_{r-1},$$
then $\lambda$ is clearly not in $\mathcal{B}_{r-1,i-1}.$
On the other hand, $\lambda \in \mathcal{B}_{r,i}$ if and only if
$$h^D_{r-1} \leq h^B_{r-1}.$$
We will show that for all partitions of $\mathcal{D}_{r,i}\setminus \mathcal{D}_{r-1,i-1},$
\begin{equation}
\label{eq:toprove}
h^B_{r-2} \leq h^D_{r-1}\leq h^B_{r-1}.
\end{equation}

\medskip

Take a partition $\lambda$ in $\mathcal{D}_{r,i}\setminus \mathcal{D}_{r-1,i-1}$ together with its $(r-i)$-Durfee dissection and draw its $(i-1)$-bottom dissection starting with $i-2$ bottom squares. By Proposition \ref{prop:height_squares}, we have
\begin{equation}
\label{eq:lastsquare}
h^D_{i-2} \leq h^B_{i-2} < h^D_{i-1}.
\end{equation}

Moreover, $D_{r-i-1}$ is the highest Durfee square, so above it is a Durfee rectangle $D'_{r-i}$. By \eqref{eq:lastsquare} and the definition of Durfee squares/rectangles, we have
\begin{equation}
\label{eq:transition}
d_{r-i+1} \leq b_{i-1} \leq d'_{r-i}+1.
\end{equation}

Thus we can add \eqref{eq:lastsquare} and \eqref{eq:transition} and obtain that
$$h^D_{i-1} \leq h^B_{i-1} \leq h^D_{i-1}.$$
From then on, we only have bottom and Durfee rectangles, so the induction step of Proposition \ref{prop:height_rectangles} can be applied to show that for all  $i-1 \leq j \leq r-1$, we have
$$h^D_j \leq h^B_j < h^D_{j+1}.$$
Thus \eqref{eq:toprove} is proved and we have shown that $\lambda \in \mathcal{B}_{r,i}\setminus \mathcal{B}_{r-1,i-1}.$

\end{proof}

We can now use Lemma \ref{lem:disjoint_union} again to finish the proof of Theorem \ref{th:Durfee_Bottom}.
\begin{proof}[Proof of Theorem \ref{th:Durfee_Bottom}]
Let $r \geq 2$ and let $1 < k <r$. Denoting by $\mathcal{P}$ the set of all partitions, we have the disjoint unions
\begin{align*}
\mathcal{P} &= \mathcal{D}_{k,1} \sqcup \bigsqcup_{j \geq 1} (\mathcal{D}_{k+j,j+1}\setminus \mathcal{D}_{k+j-1,j})\\
&= \mathcal{B}_{k,1} \sqcup \bigsqcup_{j \geq 1} (\mathcal{B}_{k+j,j+1}\setminus \mathcal{B}_{k+j-1,j}).
\end{align*}
We already know from Theorem \ref{th:all_rectangles} that $\mathcal{D}_{k,1}=\mathcal{B}_{k,1}$. Moreover, from Proposition \ref{prop:general_case} with $r=k+j$, $i=j+1$, we know that $(\mathcal{D}_{k+j,j+1}\setminus \mathcal{D}_{k+j-1,j}) \subseteq (\mathcal{B}_{k+j,j+1}\setminus \mathcal{B}_{k+j-1,j})$ for all $k$ and $j$. This completes the proof.
\end{proof}

\section{A commutative algebra proof of the equality $\mathcal{C}_{r,i}=\mathcal{D}_{r,i}$}
\label{sec:alg_Durf_Bot}

In this section, we consider the graded polynomial ring $\textbf{K}[x_1,x_2,\ldots]$; the grading is induced by the weights of the variables, $x_i$ being of weight $i$.
Given an integer $r\geq 2,$ for $1\leq i \leq r,$ recall from the introduction the ideal $I_{r,i}\subset \textbf{K}[x_1,x_2,\ldots]$ generated by $x_1^i$ and the monomials of the following form:

$$\underbrace{x_{n_{1,1}}}_{\text {first block}} \underbrace{ x_{n_{2,1}}\cdots x_{n_{2,f_{r,i}(2)}}}_{\text {second block}} \underbrace{ x_{n_{3,1}} \cdots x_{n_{3,f_{r,i}(3)}}}_{\text{third block}} \cdots \underbrace{ x_{n_{r,1}}\cdots x_{n_{r,f_{r,i}(r)}}}_{\text{$r$-th block}}\,,$$

\noindent where 

$$f_{r,i}(j):= \begin{cases}
1 &\text{ if } j=1, \\
n_{j-1,f_{r,i}(j-1)} &\text{ if } 2\leq j \leq i,\\
n_{j-1,f_{r,i}(j-1)}-1 &\text{ if } i+1 \leq j \leq r.
\end{cases}$$

One can show by induction that for all $1\leq i,j \leq r-1$,
$$f_{r,i}(j)=f_{r-1,i}(j)\  \text{and} \ f_{r,r}(j)=f_{r-1,r-1}(j).$$
From now on, for simplicity, we use $f(j)$ instead of $f_{r,i}(j)$ when there is no confusion (namely when $r$ and $i$ are fixed).\\

The partitions associated with the monomials of $\textbf{K}[x_1,x_2,\cdots]/I_{r,i}$ are (almost by definition) exactly the partitions of $\mathcal{C}_{r,i}$ (see also Proposition~5.3 in \cite{A}): indeed, these monomials are not divisible by any of the monomials generating the ideal $I_{r,i}.$ Note that for the partition $\lambda$ associated with the monomial   
$$x_\lambda=x_{n_{1,1}}  x_{n_{2,1}}\cdots x_{n_{2,f(2)}}x_{n_{3,1}} \cdots x_{n_{3,f(3)}} \cdots  x_{n_{r,1}}\cdots x_{n_{r,f(r)}},$$
we have $p_{i,\ell}(\lambda)=n_{\ell,f(\ell)}$ for all $\ell=1,\ldots,r-1$. Actually, this is what motivates the introduction of the 
 $p_{i,\ell}(\lambda)$'s and the condition that the length $s$ of $\lambda$ satisfies 
 $$s= \sum_{j=1}^{r-1} p_{i,j}(\lambda)-(r-i)+1.$$ 
 Moreover, for a partition $\mu$ of length $s'$ associated with a multiple of the monomial $x_\lambda$,  we have $p_{i,j}(\mu)\leq p_{i,j}(\lambda)$ and
 $$ s'\geq s>\sum_{j=1}^{r-1} p_{i,j}(\lambda)-(r-i)\geq \sum_{j=1}^{r-1} p_{i,j}(\mu)-(r-i).$$
 This shows that the partitions associated with a monomial in $I_{r,i}$ are exactly those which are not in $\mathcal{C}_{r,i}$.\\

 The goal of this section is to prove that the partitions associated with the monomials of $\textbf{K}[x_1,x_2,\cdots]/I_{r,i}$ are exactly the partitions of $\mathcal{D}_{r,i}$, i.e. $\mathcal{C}_{r,i}=\mathcal{D}_{r,i}.$  To do so, we analyse the monomials which do not belong to $I_{r,i}$. The proof is by induction on $r$ and is guided by the following exact sequence: for $i =1,\ldots,r-1$, we have 

\begin{equation}\label{seq}
0\longrightarrow \frac{I_{r-1,i}}{I_{r,i}} \longrightarrow \frac{\textbf{K}[x_1,x_2\cdots]}{I_{r,i}} \longrightarrow \frac{\textbf{K}[x_1,x_2,\cdots]}{I_{r-1,i}} \longrightarrow 0,
\end{equation}
where we have $I_{r,i}\subset I_{r-1,i}$. Thus a monomial basis of $\textbf{K}[x_1,x_2,\cdots]/I_{r,i}$ is formed by a monomial basis of $\textbf{K}[x_1,x_2,\cdots]/I_{r-1,i}$ and a monomial basis of $I_{r-1,i}/I_{r,i}$. By induction, a monomial basis of $\textbf{K}[x_1,x_2,\cdots]/I_{r-1,i}$ is formed by the monomials associated with partitions in $\mathcal{D}_{r-1,i}$. A monomial basis of $I_{r-1,i}/I_{r,i}$ is formed by monomials of the form 

\begin{equation}\label{monome}
x_\lambda =x_{n_{1,1}}x_{n_{2,1}}\cdots x_{n_{2,f(2)}} x_{n_{3,1}} \cdots x_{n_{3,f(3)}} \cdots x_{n_{r-1,1}} \cdots x_{n_{r-1,f(r-1)}}  x_{n_{r,1}}\cdots x_{n_{r,\ell}},
\end{equation}
where $0\leq \ell < f(r)$. Note that when $\ell=0$, the last variable of $x_{\lambda}$ is $x_{n_{r-1,f(r-1)}}$. We start by stating the main theorem of this section, which provides an algebraic proof of Theorem~\ref{th:Durfee_Bottom}.

\begin{Theorem}\label{thm:algebraic}
Let $r\geq 2$ and $1\leq i \leq r$ be integers, let $\lambda$ be a partition and $x_{\lambda}$ be the associated monomial. We have $x_\lambda   \in \textbf{K}[x_1,x_2,\cdots]/I_{r,i}$ if and only if $\lambda   \in \mathcal{D}_{r,i}.$ Equivalently, $\mathcal{C}_{r,i}=\mathcal{D}_{r,i}.$
\end{Theorem}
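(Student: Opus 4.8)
The plan is to prove Theorem~\ref{thm:algebraic} by induction on $r$, exploiting the short exact sequence~\eqref{seq} in exactly the way the surrounding text suggests. The base case will be $r=2$ (with $i\in\{1,2\}$), where $I_{2,1}=(x_1)$ and $I_{2,2}=(x_1^2)$ can be analysed by hand: the surviving monomials correspond directly to the $1$-Durfee and $0$-Durfee dissections, matching $\mathcal{D}_{2,1}$ and $\mathcal{D}_{2,2}$. For the inductive step, I would first dispose of the two extreme cases $i=r$ and $i=1$, which are already settled combinatorially by Theorems~\ref{th:all_squares} and~\ref{th:all_rectangles} together with the identities $f_{r,r}(j)=f_{r-1,r-1}(j)$ and $f_{r,i}(j)=f_{r-1,i}(j)$ recorded at the start of the section; these tell us that the ideals stabilise appropriately as $r$ grows, so the algebraic and combinatorial filtrations agree.

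For the main inductive step with $1\le i\le r-1$, I would read off from~\eqref{seq} that a monomial basis of $\textbf{K}[x_1,x_2,\dots]/I_{r,i}$ splits as a basis of $\textbf{K}[x_1,x_2,\dots]/I_{r-1,i}$ together with a basis of $I_{r-1,i}/I_{r,i}$. By the induction hypothesis the first piece is exactly $\{x_\mu : \mu\in\mathcal{D}_{r-1,i}\}$, so the entire burden is to identify the second piece. The key claim to establish is that the monomials $x_\lambda$ of the shape~\eqref{monome}, with $0\le \ell<f(r)$, which lie in $I_{r-1,i}\setminus I_{r,i}$, correspond precisely to the partitions in $\mathcal{D}_{r,i}\setminus\mathcal{D}_{r-1,i}$, i.e. those having exactly $r-i$ non-empty Durfee rectangles followed by exactly $i-1$ non-empty Durfee squares. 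To see this I would translate the block structure of~\eqref{monome} into the $(r-i)$-Durfee dissection: the observation $p_{i,\ell}(\lambda)=n_{\ell,f(\ell)}$ shows that the exponents $n_{\ell,\cdot}$ encode the successive larger sides $d'_1,\dots,d'_{r-i},d_{r-i+1},\dots,d_{r-1}$ of the rectangles and squares, while the defining recursion for $f_{r,i}$ (subtracting $1$ when passing from a square-regime block to a rectangle-regime block) matches exactly the passage from a Durfee square of side $d$ to a Durfee rectangle of smaller side $d-1$. The condition $0\le\ell<f(r)$ is then precisely the statement that the $(r-1)$-st Durfee square/rectangle is non-empty but the $r$-th is empty.

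Concretely, I would verify the two inclusions separately. For ``$\lambda\in\mathcal{D}_{r,i}\setminus\mathcal{D}_{r-1,i}\Rightarrow x_\lambda\in I_{r-1,i}\setminus I_{r,i}$'', I would show that reading the dissection data produces an admissible assignment $n_{j,k}$ realising the generating monomial pattern for $I_{r,i}$ (so $x_\lambda\in I_{r,i}^c$ fails, i.e. $x_\lambda\notin\textbf{K}[x_1,\dots]/I_{r,i}$ is wrong --- care with the direction), while the emptiness of the $r$-th square forces $\ell<f(r)$ and hence $x_\lambda\in I_{r-1,i}$. Conversely, for a monomial of shape~\eqref{monome} I would reconstruct the dissection and check it has the right number of rectangles and squares, using the earlier height inequalities from Propositions~\ref{prop:height_squares} and~\ref{prop:height_rectangles} to guarantee that the geometric Durfee dissection really does coincide with the combinatorial block decomposition. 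Finally, applying Lemma~\ref{lem:disjoint_union} to the disjoint-union filtration of the set of all partitions, as in the proof of Theorem~\ref{th:Durfee_Bottom}, upgrades the inclusions to equalities and completes the induction.

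The main obstacle I anticipate is the transition block between the square-regime and the rectangle-regime, i.e. the boundary $\ell=i$ in the definition of $f_{r,i}$ where the recursion switches from $n_{j-1,f(j-1)}$ to $n_{j-1,f(j-1)}-1$. This is exactly the place where the combinatorial proofs also required extra care (compare~\eqref{eq:transition} in the proof of Proposition~\ref{prop:general_case}), and the algebraic bookkeeping of how the $\mathrm{\,-1\,}$ in $f_{r,i}(j)$ for $i+1\le j\le r$ corresponds to the drop in side-length when moving from a Durfee square to a Durfee rectangle will be the delicate step; I would isolate it as a separate lemma matching the exponent data $n_{i,f(i)}$ and $n_{i+1,f(i+1)}$ to the sizes $d_{r-i+1}$ and $d'_{r-i}$ of the last square and the adjacent rectangle.
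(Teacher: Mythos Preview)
Your overall inductive scheme via the exact sequence~\eqref{seq} matches the paper's approach, and the base case $r=2$ is indeed immediate. However, there is a genuine gap in your identification of the difference set. You assert that $\mathcal{D}_{r,i}\setminus\mathcal{D}_{r-1,i}$ consists of ``those having exactly $r-i$ non-empty Durfee rectangles followed by exactly $i-1$ non-empty Durfee squares''. This is false for $2\le i\le r-1$: you are confusing it with $\mathcal{D}_{r,i}\setminus\mathcal{D}_{r-1,i-1}$ from Section~\ref{sec:combi_Durf_Bot}. In passing from $\mathcal{D}_{r,i}$ to $\mathcal{D}_{r-1,i}$ the number of rectangles drops by one while $i$ stays fixed, so one must compare the $(r-i)$-Durfee dissection of $\lambda$ with its $(r-1-i)$-Durfee dissection. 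A partition with exactly $r-i$ rectangles and $i-1$ squares in its $(r-i)$-dissection can nonetheless lie in $\mathcal{D}_{r-1,i}$: for instance if $S_\lambda=\emptyset$ and the last square has size $d_{r-1}=1$, redrawing with one fewer rectangle still covers the whole partition. Consequently the set of partitions with exactly $r-i$ rectangles and $i-1$ squares is \emph{strictly larger} than the set of monomials of shape~\eqref{monome}, and the bijection you propose cannot close.

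The paper handles this by first proving a careful characterisation of $\mathcal{D}_{r,i}\setminus\mathcal{D}_{r-1,i}$ (Lemma~\ref{43}), introducing the invariants $S_\lambda$ and $m_\lambda$ that record where a Durfee square first becomes strictly smaller than its predecessor, and then using this in the converse direction (Proposition~\ref{mon. gen.}) by stripping the first Durfee rectangle and invoking the induction hypothesis on $r-1$. The forward direction (Proposition~\ref{1}) is closer to what you sketch and rests on Lemma~\ref{fst. rec. }, which pinpoints that the first Durfee rectangle/square always ends inside the $(r-1)$-th block of the monomial. None of this borrows from Section~\ref{sec:combi_Durf_Bot}; your plan to invoke Theorems~\ref{th:all_squares} and~\ref{th:all_rectangles}, Propositions~\ref{prop:height_squares} and~\ref{prop:height_rectangles}, and Lemma~\ref{lem:disjoint_union} would undercut the point of an independent algebraic proof, and in any case those results are organised around the filtration $\mathcal{D}_{r,i}\supset\mathcal{D}_{r-1,i-1}$, not the one $\mathcal{D}_{r,i}\supset\mathcal{D}_{r-1,i}$ forced on you by~\eqref{seq}.
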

As several preliminary results are necessary, we postpone the proof of this theorem to the end of this section, and start by presenting the results which will be key in the proof.

The analysis of the partitions associated with the monomials of the ring $\textbf{K}[x_1,x_2,\cdots]/I_{r-1,i}$ at the right of the exact sequence~(\ref{seq}) is understood by induction. 
We now focus on the monomials of the module $I_{r-1,i}/I_{r,i}$, which are of the form~(\ref{monome}). 

\begin{Lemma}\label{fst. rec. } Let $$x_\lambda =x_{n_{1,1}}x_{n_{2,1}}\cdots x_{n_{2,f(2)}} x_{n_{3,1}} \cdots x_{n_{3,f(3)}} \cdots  x_{n_{r,1}}\cdots x_{n_{r,\ell}},$$ 
where $0\leq \ell < f(r).$ If  $1\leq i \leq r-1$ (respectively $i=r$) then the first Durfee rectangle (respectively the first Durfee square) ends at the $(r-1)$-th block of the monomial $x_\lambda.$
\end{Lemma}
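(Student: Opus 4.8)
The plan is to unpack the combinatorial meaning of the monomial $x_\lambda$ in \eqref{monome} and match the block structure of the defining monomials of $I_{r,i}$ against the successive Durfee squares/rectangles of $\lambda$. Recall that $\lambda$ is obtained by reading the exponents of $x_\lambda$, and that the parts are grouped into $r$ blocks whose sizes are $1, f(2), \dots, f(r)$, with the last block truncated to length $\ell$. The key observation I want to exploit is the recursive definition of $f(j)$: for $2 \leq j \leq i$ one has $f(j)=n_{j-1,f(j-1)}$, which is exactly the size of a Durfee \emph{square} (the number of parts in a block equals the smallest part of the block above, namely the corner of a square), while for $i+1 \leq j \leq r$ one has $f(j)=n_{j-1,f(j-1)}-1$, which is exactly the ``one less column than rows'' condition defining a horizontal Durfee \emph{rectangle}.

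First I would set up the correspondence carefully near the top (largest parts) of $\lambda$, which corresponds to the first block, then the second block, and so on. Since the blocks in $x_\lambda$ are written from left (index $1$) to right (index $r$) and the monomial convention is $n_{1,1} \geq n_{2,1} \geq \cdots$, the first block sits at the top of the Young diagram. For the case $1 \leq i \leq r-1$ I would argue that the first Durfee rectangle of $\lambda$ (recall these are horizontal, and we start with rectangles) is captured precisely by reading down through the blocks until the rectangle-defining inequality $f(j)=n_{j-1,f(j-1)}-1$ ceases to be the operative constraint; the claim is that this first rectangle's bottom edge lands exactly at the end of the $(r-1)$-th block. For $i=r$ (only squares) the analogous statement holds with the square condition $f(j)=n_{j-1,f(j-1)}$. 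The argument should proceed by a direct computation of the size of the first Durfee square/rectangle: the number of rows it spans equals the cumulative block lengths $1 + f(2) + \cdots + f(r-1)$ up to the end of block $r-1$, and the width condition forces its boundary to coincide with the block boundary because of how $f$ is defined.

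The technical heart is verifying that the \emph{largest} square (resp. rectangle) fitting in the top-left corner is exactly the one ending at block $r-1$, i.e. that one cannot fit a strictly larger one, and that a strictly smaller one is not maximal. This amounts to checking two inequalities at the block boundary: that the part immediately after block $r-1$ (the first part of block $r$, or the following structure when $\ell=0$) is small enough that the rectangle/square cannot be extended, and that all parts within blocks $1$ through $r-1$ are large enough to fill the rectangle/square. I expect this to reduce to the monotonicity $n_{j,f(j)} \geq n_{j+1,1}$ across block boundaries together with the defining recursion for $f$, so it should be a finite check rather than an induction, though one must handle the truncated last block (the condition $0 \leq \ell < f(r)$) with care.

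The main obstacle I anticipate is the boundary bookkeeping at the transition from squares to rectangles (the index $j=i$), and the edge case $\ell = 0$ where the last variable is $x_{n_{r-1,f(r-1)}}$ so that block $r$ is empty; in that case the statement that the first Durfee square/rectangle ``ends at the $(r-1)$-th block'' must be interpreted so that the rectangle's bottom coincides with the very last part of $\lambda$. I would treat $\ell = 0$ and $\ell \geq 1$ uniformly by noting that the truncation of block $r$ only affects parts strictly below the first Durfee square/rectangle and hence does not interfere with its determination. Once the block-to-Durfee dictionary is established for the first square/rectangle, the recursion in the exact sequence \eqref{seq} will let the later results of the section iterate this to all successive Durfee squares/rectangles, but that is beyond the present lemma.
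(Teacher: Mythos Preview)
Your proposal rests on a reversed reading of the monomial. You write that ``the monomial convention is $n_{1,1} \geq n_{2,1} \geq \cdots$, the first block sits at the top of the Young diagram,'' but the opposite is true: $n_{1,1}$ is the \emph{smallest} part of $\lambda$ (see the example right after Proposition~\ref{mon. gen.}, where $n_{1,1}=3$, $n_{2,1}=4$, etc.), so block~$1$ is at the bottom and block~$r$ (the truncated one of length $\ell$) contains the \emph{largest} parts. Consequently the first Durfee rectangle, which sits in the top-left corner, covers all of block~$r$ and reaches down into block~$r-1$; it does \emph{not} span rows $1,\dots,1+f(2)+\cdots+f(r-1)$ as you suggest. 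Your proposed dictionary ``block~$j$ $\leftrightarrow$ $j$-th Durfee square/rectangle from the top'' is therefore off, and the two inequalities you plan to check are the wrong ones.

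There is a second, related issue: the recursion $f(j)=n_{j-1,f(j-1)}$ versus $f(j)=n_{j-1,f(j-1)}-1$ that you correctly flag does encode a square/rectangle alternative, but it matches the \emph{bottom} dissection (built upward from the smallest part), not the Durfee dissection (built downward from the largest). This is exactly why the paper separates Theorem~\ref{th:Durfee_Bottom} from the present lemma. The paper's proof here is in fact much shorter than what you outline: one shows directly the double inequality $\ell < d'_1 \leq f(r-1)+\ell$ (with $d'_1$ the height of the first Durfee rectangle), using only $\ell<f(r)=n_{r-1,f(r-1)}-1$ for the left bound and the value of $f(r-1)$ in terms of $n_{r-2,f(r-2)}$ for the right bound. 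This places the bottom of the first Durfee rectangle strictly inside block~$r-1$, which is precisely the assertion. No induction and no transition bookkeeping at $j=i$ are needed at this stage.
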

\begin{proof} Let $1\leq i \leq r$ (resp. $i=r$) and consider the  partition $\lambda$ associate to $x_{\lambda}$. Denote the height of the first Durfee rectangle of $\lambda$  by $d'_1$ (resp. the size of its first Durfee square by $d_1$). We prove that if we remove the monomial associated with the partition whose parts are the first $d'_1$  (resp. $d_1$)  parts of $\lambda$ from $x_\lambda$ then we obtain
$$x_{n_{1,1}}x_{n_{2,1}}\cdots x_{n_{2,f(2)}} x_{n_{3,1}} \cdots x_{n_{3,f(3)}} \cdots  x_{n_{r-1,1}}\cdots x_{n_{r-1,\ell'}}$$
where $0\leq \ell' < f(r-1)$.
To do so, we need to prove that if $1\leq i \leq r-1$ (resp. $i=r$), the height $d'_1$ of the first Durfee rectangle (resp. the size $d_1$ of the first Durfee square) satisfies:
\begin{equation}
\label{eq:star}
\ell < d'_1 \text{ (resp. $d_1$)} \leq f(r-1)+\ell .
\end{equation}
\begin{itemize}

\item[-] Left inequality of \eqref{eq:star}:
for $1\leq i \leq r-1$, the first inequality follows from the fact that $\ell<f(r)=n_{r-1,f(r-1)}-1$. We then have $\ell+1\leq n_{r-1,f(r-1)}-1$, and $\lambda$ contains a rectangle of height $\ell+1$, which is not necessarily of maximal size.
For $i=r,$ it follows from the fact that $\ell<f(r)=n_{r-1,f(r-1)}$. Thus $\ell+1\leq n_{r-1,f(r-1)}$, and in this case $\lambda$ has a Durfee square of size at least $\ell+1$.
\item[-] Right inequality of \eqref{eq:star}:
note that 
$$f(r-1)= \begin{cases}
n_{r-2,f(r-2)} &\text{ if } r-1\leq i \leq r, \\
n_{r-2,f(r-2)}-1 &\text{ if } 1\leq i<r-1.
\end{cases}$$
So for $1\leq i \leq r-1$,
$$\ell+f(r-1)+1\geq \ell+(n_{r-2,f(r-2)}-1)+1\geq n_{r-2,f(r-2)}.$$
This proves that if $1\leq i \leq r-1$, then $\lambda$ cannot have a Durfee rectangle of height larger than or equal to $\ell+f(r-1)+1$.
\\
For $i=r$,
$$\ell+f(r-1)+1= \ell+n_{r-2,f(r-2)}+1 > n_{r-2,f(r-2)}.$$
So in this case, the size of the first Durfee square of $\lambda$ is at most $\ell+f(r-1)$.
\end{itemize}
\end{proof}
\begin{Proposition}\label{1} If 
$$x_\lambda =x_{n_{1,1}}x_{n_{2,1}}\cdots x_{n_{2,f(2)}} x_{n_{3,1}} \cdots x_{n_{3,f(3)}} \cdots  x_{n_{r,1}}\cdots x_{n_{r,\ell}},$$ 
where $0\leq \ell < f(r),$ then $\lambda$ belongs to $\mathcal{D}_{r,i} $ and has exactly $r-i$  Durfee rectangles of height $>0$ followed by $i-1$ Durfee squares.
\end{Proposition}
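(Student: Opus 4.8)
The plan is to proceed by induction on $r$, using the exact sequence~\eqref{seq} together with Lemma~\ref{fst. rec. } as the main structural tool. The statement to prove is twofold: first, that the partition $\lambda$ associated to a monomial $x_\lambda$ of the form~\eqref{monome} (that is, a monomial in the module $I_{r-1,i}/I_{r,i}$) has exactly $r-i$ Durfee rectangles of positive height followed by $i-1$ Durfee squares; and second, that this forces $\lambda \in \mathcal{D}_{r,i}$. The second assertion is essentially a bookkeeping consequence of the first: having precisely $r-i$ positive-height Durfee rectangles and then $i-1$ squares, with nothing below, is exactly the defining condition that in the $(r-i)$-Durfee dissection all Durfee squares below $D_{r-1}$ are empty. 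So the heart of the matter is the count of successive Durfee squares/rectangles, and the natural way to get this count is to peel off the top block and recurse.

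First I would set up the recursion. By Lemma~\ref{fst. rec. }, when $1 \leq i \leq r-1$ the first Durfee \emph{rectangle} of $\lambda$ ends precisely at the $(r-1)$-th block of $x_\lambda$, meaning its height $d'_1$ satisfies $\ell < d'_1 \leq f(r-1)+\ell$, and after removing the top $d'_1$ parts the remaining monomial has exactly the shape~\eqref{monome} for the index $r-1$ (with a new trailing exponent $\ell'$ satisfying $0 \leq \ell' < f(r-1)$). This is exactly the inductive setup: the truncated partition is associated to a monomial in $I_{r-2,i}/I_{r-1,i}$, so by the induction hypothesis (Proposition~\ref{1} at level $r-1$) it has $r-1-i$ Durfee rectangles of positive height followed by $i-1$ squares. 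Adding back the first Durfee rectangle $D'_1$ of positive height gives $r-i$ rectangles followed by $i-1$ squares, as desired. I would handle the base case and the passage through the square/rectangle transition carefully: the induction decreases $r$ by one while keeping $i$ fixed, and the relations $f_{r,i}(j)=f_{r-1,i}(j)$ recalled just before the theorem guarantee that the block structure of the truncated monomial is genuinely that of level $r-1$ with the same $i$.

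Next I would treat the terminal case. When $i=r$ the recursion stops: Lemma~\ref{fst. rec. } tells us the first Durfee \emph{square} ends at the $(r-1)$-th block, and iterating produces a chain of $r-1$ Durfee squares with nothing below, which is the all-squares situation $\mathcal{D}_{r,r}$ already understood. More generally, once the induction has stripped away all $r-i$ rectangles, the leftover monomial corresponds to a partition with only squares, and I would invoke the already-established all-squares analysis to certify that the remaining $i-1$ squares are genuinely the last non-empty Durfee squares. I would also need to verify the boundary claim that the height $d'_1$ is strictly positive; for $1 \leq i \leq r-1$ this follows from $\ell < f(r)$ forcing a rectangle of height at least $\ell+1 \geq 1$, which is exactly the left inequality of~\eqref{eq:star}, while the degenerate case $\ell=0$ (where the last variable is $x_{n_{r-1,f(r-1)}}$) still yields a rectangle of height $\geq 1$ since the block structure persists.

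The main obstacle I anticipate is the transition between rectangles and squares, i.e. making sure that after exhausting the $r-i$ rectangles the dissection switches cleanly to squares without an off-by-one error in the heights, and that the trailing exponent $\ell'$ of the truncated monomial genuinely lies in the admissible range $0 \leq \ell' < f(r-1)$ so that the induction hypothesis applies verbatim. Establishing $\ell < d'_1 \leq f(r-1)+\ell$ (the content of~\eqref{eq:star}) is what controls this, since it simultaneously certifies that $D'_1$ consumes exactly the top block and that what remains still has a valid block decomposition of the next lower level. I expect the degenerate Durfee rectangles of size $1 \times 0$ (arising from parts equal to $1$) to require a small separate check, precisely the kind of boundary bookkeeping that the surrounding lemmas were designed to absorb; once~\eqref{eq:star} is in hand, the rest is a direct unwinding of the definitions of $\mathcal{D}_{r,i}$ and the $(r-i)$-Durfee dissection.
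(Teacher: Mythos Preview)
Your proposal is correct and follows essentially the same route as the paper: induction on $r$, using Lemma~\ref{fst. rec. } to peel off the first Durfee rectangle (or square when $i=r$), invoking the relations $f_{r,i}(j)=f_{r-1,i}(j)$ (and $f_{r,r}(j)=f_{r-1,r-1}(j)$ for the square case) to recognize the truncated monomial as one at level $r-1$, and then applying the induction hypothesis. The paper's proof is slightly terser than yours---it does not separately worry about the rectangle-to-square transition or the $1\times 0$ degenerate rectangles, since those concerns are already absorbed into the inequalities of Lemma~\ref{fst. rec. } and the base case $r=2$---but the architecture is identical.
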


\begin{proof}
The proof is by induction on $r$. For $r=2$ and $i=1$ (resp. $i=2$), suppose that $x_{\lambda}=x_{n_{1,1}}x_{n_{2,1}} \cdots x_{n_{2,\ell}}$ with $0\leq \ell <f_{2,1}(2)$ (resp. $0 \leq \ell<f_{2,2}(2)$). Since $0 \leq \ell  <f_{2,1}(2)=n_{1,1}-1$  (resp. $0 \leq \ell<f_{2,2}(2)=n_{1,1}$), $\lambda$ has a  Durfee rectangle of height (resp. a Durfee square of size) $\ell+1$ and no part below it (see Figure \ref{fig:prop43}).

\begin{figure}[H]
\includegraphics[width=0.5\textwidth]{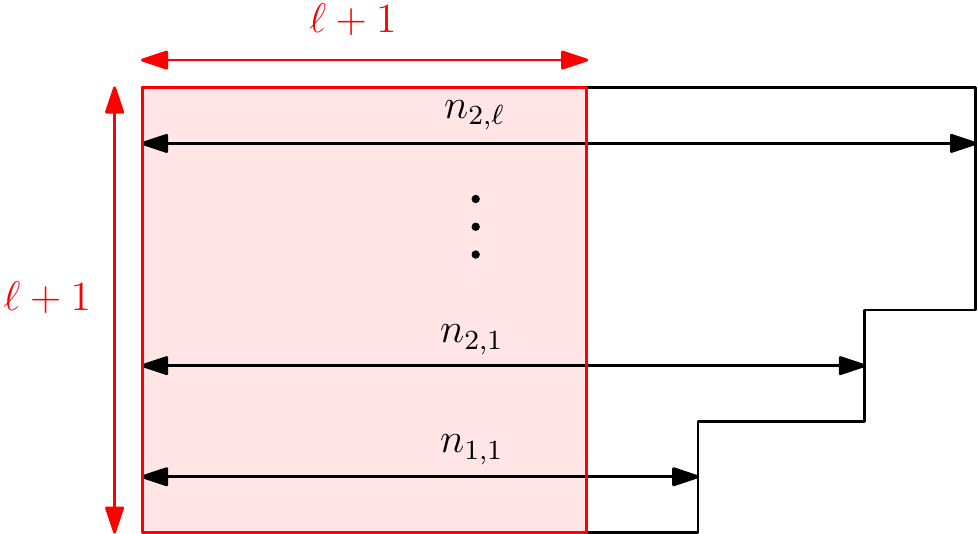}
\caption{The partition $\lambda$ associated to $x_{n_{1,1}}x_{n_{2,1}}\dots x_{n_{2,l}}$ with $0 \leq \ell < n_{1,1}$}
\label{fig:prop43}
\end{figure}

Let us now assume  that the proposition is true for $r-1$, and prove it for $r$.
Suppose that 
$$x_\lambda =x_{n_{1,1}}x_{n_{2,1}}\cdots x_{n_{2,f(2)}} x_{n_{3,1}} \cdots x_{n_{3,f(3)}} \cdots  x_{n_{r,1}}\cdots x_{n_{r,\ell}},$$ 
where $0\leq \ell < f(r)$. Lemma \ref{fst. rec. } says that by removing from $x_\lambda$ the monomial associated with the partition $\nu$ whose parts are the first parts of $\lambda$ containing its first square/rectangle,  we obtain the following monomial:

$$x_{\mu}:=x_{n_{1,1}}x_{n_{2,1}}\cdots x_{n_{2,f(2)}} x_{n_{3,1}} \cdots x_{n_{3,f(3)}} \cdots  x_{n_{r-1,1}}\cdots x_{n_{r-1,\ell'}}\,,$$
where $0\leq \ell' < f(r-1)$.



Now, by the induction hypothesis applied to $x_{\mu}$,
we obtain that
\begin{itemize}
\item[-]for $1\leq i \leq r-1$, the partition $\mu$ associated to $x_{\mu}$ has exactly $r-1-i$ Durfee rectangles and $i-1$ Durfee squares;
\item[-]for $i=r$, $f(j)=f_{r,r}(j)=f_{r-1,r-1}(j)$ for all $1\leq j \leq r-1$, so by the induction hypothesis $\mu$ has exactly $r-2$ Durfee squares.  
\end{itemize}
 
Adding the partition $\nu$ (which was removed earlier from $\lambda$ and which contains its first Durfee square/rectangle) to $\mu$ proves that $\lambda$ has exactly $r-i$ Durfee rectangles and $i-1$ Durfee squares. 
 \end{proof}

Next we need to fix some notation. Set $1\leq s \leq i-1$ and $\lambda \in \mathcal{D}_{r,i}$ with exactly $r-i$ Durfee rectangles of heights $d'_1\geq \cdots \geq d'_{r-i}>0$ (or with no Durfee rectangle if $i=r$) and $s$ Durfee squares of sizes $d_{r-i+1}\geq \cdots \geq d_{r-i+s}$. We define $S_{\lambda}$ to be the set of all  $1\leq j \leq s$ such that the first part of the $j$-th square is strictly less than the size of the $(j-1)$-th square (or (the height of the $(r-i)$-th rectangle$+1$)  if $j=1$). In other words,
$$ S_{\lambda}=\{1\leq j \leq s\,|\, \lambda_{\sum_{l=1}^{r-i}d'_l+\sum_{l=1}^{j-1}d_{r-i+l}+1}< d_{r-i+j-1} \ \text{(or $\lambda_{\sum_{l=1}^{r-i}d'_l+1}<d'_{r-i}+1$ if $j=1$ and $1\leq i \leq r-1$}) \}.$$
If $S_{\lambda} \neq \emptyset$, then we define $m_{\lambda}:= min(S_{\lambda})$. See an example on Figure~\ref{fig:lem44}.


\begin{figure}[H]
\includegraphics[width=0.5\textwidth]{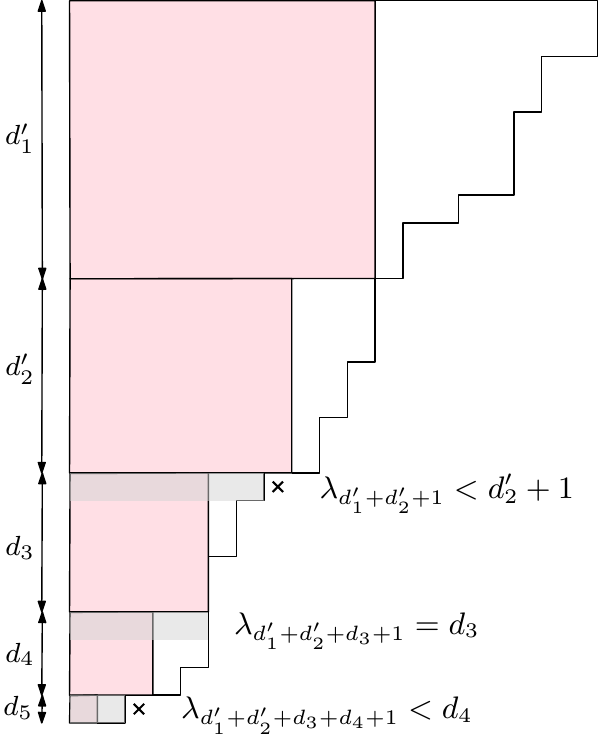}
\caption{A partition $\lambda \in \mathcal{D}_{7,5}$ with $S_{\lambda}=\{1,3\}$ and $m_{\lambda}=1$, for $s=3$}
\label{fig:lem44}
\end{figure}

\begin{Lemma}\label{43} Let $r\geq 2$. We set $\mathcal{D}_{r,i}:=\mathcal{D}_{r,r}$ for all $i >r$ and $\mathcal{D}_{1,1}:= \emptyset$. We have:
 \begin{itemize}
 \item $\mathcal{D}_{r,1} \setminus \mathcal{D}_{r-1,1}$ is the set of all partitions $\lambda$ with exactly $r-1$ Durfee rectangles of heights $>0$ and nothing after their last rectangle.
 \item $\mathcal{D}_{r,r} \setminus \mathcal{D}_{r-1,r}=\mathcal{D}_{r,r} \setminus \mathcal{D}_{r-1,r-1}$  is the set of all partitions $\lambda$ with exactly $r-1$ Durfee squares.
 \item If $2\leq i \leq r-1$, then $\mathcal{D}_{r,i} \setminus \mathcal{D}_{r-1,i}$ is the set of all partitions $\lambda$ with exactly $r-i$ Durfee rectangles of heights $>0$ and $i-1$ Durfee squares with one of the following conditions:
 \begin{itemize}
 \item $S_{\lambda}= \emptyset$ and $d_{r-1}>1;$
 \item $S_{\lambda} \neq \emptyset$ and $m_{\lambda}=1;$
 \item $S_{\lambda} \neq \emptyset, 2\leq m_{\lambda} \leq i-1$, and $d_{r-i+m_\lambda-1}>1$.
 \end{itemize}
 \end{itemize}
 \end{Lemma}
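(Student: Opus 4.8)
The plan is to read off both sides of each asserted equality directly from the two dissections that define them: membership in $\mathcal{D}_{r,i}$ is decided by the $(r-i)$-Durfee dissection, which draws $r-i$ horizontal rectangles and then squares, whereas membership in $\mathcal{D}_{r-1,i}$ is decided by the $(r-1-i)$-Durfee dissection, which draws $r-1-i$ rectangles and then squares. For a fixed $\lambda$ these two dissections produce the \emph{same} first $r-1-i$ rectangles, and differ for the first time at the $(r-i)$-th object, which is a horizontal rectangle for the former and a square for the latter. Hence the whole lemma amounts to understanding the effect of replacing this single rectangle by a square. In the two extreme bullets this effect is trivial: for $i=1$ both dissections use only rectangles, so $\mathcal{D}_{r,1}$ (resp. $\mathcal{D}_{r-1,1}$) is the set of partitions with at most $r-1$ (resp. $r-2$) non-empty Durfee rectangles and nothing below the last one, whence the difference is exactly the partitions with $r-1$ non-empty rectangles, already the content of Theorem~\ref{th:all_rectangles}; for $i=r$ both dissections use only squares and the same count gives the partitions with exactly $r-1$ non-empty squares, the identity $\mathcal{D}_{r-1,r}=\mathcal{D}_{r-1,r-1}$ being the stated convention.

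For $2\le i\le r-1$ I would first restrict to the generic shape. A short direct check of the degenerate configurations---those whose $(r-i)$-dissection terminates during the rectangle phase, contains a height-$0$ rectangle, or has fewer than $i-1$ non-empty squares---shows that each of them lies simultaneously in $\mathcal{D}_{r,i}$ and in $\mathcal{D}_{r-1,i}$ (replacing the $(r-i)$-th rectangle by a square can create at most one extra square, so such a $\lambda$ still has at most $i-1$ squares in the $(r-1-i)$-dissection), hence is not in the difference. This is exactly why the statement retains only partitions with exactly $r-i$ positive-height rectangles and exactly $i-1$ squares; any such $\lambda$ lies in $\mathcal{D}_{r,i}$ automatically, so the only remaining question is whether the $(r-1-i)$-dissection of $\lambda$ produces $i$ squares (then $\lambda\notin\mathcal{D}_{r-1,i}$, so $\lambda$ is in the difference) or only $i-1$ (then $\lambda\in\mathcal{D}_{r-1,i}$). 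Writing $\nu$ for the part of $\lambda$ below its first $r-1-i$ rectangles, its Durfee rectangle has height $d'_{r-i}$ and its Durfee square has size $d'_{r-i}$ or $d'_{r-i}+1$, the two cases being precisely $1\in S_\lambda$ and $1\notin S_\lambda$.

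The heart of the argument is then a cascade analysis of this transition square. If $1\in S_\lambda$ (so $m_\lambda=1$), the transition square has size $d'_{r-i}$, it occupies exactly the rows of the former rectangle, every later square is unchanged, and the $(r-1-i)$-dissection has $1+(i-1)=i$ squares, so $\lambda$ is in the difference: this is the second bullet. If $1\notin S_\lambda$, the transition square is one row taller than the rectangle, and I would follow this single extra row downward through the stack of squares: as long as the successive square-to-square transitions are aligned (i.e. $j\notin S_\lambda$) each Durfee square keeps its size and the extra row is pushed one level lower, whereas at the first drop $j=m_\lambda$ the $m_\lambda$-th square loses exactly one unit of size---its non-emptiness forcing $d_{r-i+m_\lambda-1}>1$, so that condition is automatically satisfied---after which the remaining squares are untouched; the total is again $i$ squares, which is the third bullet. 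Finally, if $S_\lambda=\emptyset$ the extra row cascades past the last square, which shrinks by one: it survives precisely when $d_{r-1}>1$, producing $i$ squares and placing $\lambda$ in the difference (the first bullet), while if $d_{r-1}=1$ it vanishes, the count drops to $i-1$, and $\lambda$ falls back into $\mathcal{D}_{r-1,i}$. Collecting the three surviving cases yields exactly the three listed conditions.

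The step I expect to be the main obstacle is this cascade bookkeeping: one must verify row by row that the single extra row created at the rectangle-to-square transition travels through the stacked Durfee squares in exactly the manner described---shrinking one square when it meets a drop and otherwise descending unchanged---and one must treat with particular care the boundary behaviour at the bottom of the partition, which is the unique place where the size of the last square genuinely matters and where the non-redundant hypothesis $d_{r-1}>1$ of the first bullet emerges. Once the cascade is established, the three bullets follow simply by reading off whether the $(r-1-i)$-dissection ends with $i$ or with $i-1$ non-empty squares.
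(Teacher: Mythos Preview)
Your approach is correct and coincides with the paper's: both dispose of the extremal cases $i=1$ and $i=r$ directly from the definitions, and for $2\le i\le r-1$ both compare the $(r-i)$- and $(r-1-i)$-Durfee dissections, first eliminating the degenerate shapes (fewer than $r-i$ positive-height rectangles or fewer than $i-1$ squares) and then running the same case analysis on $S_\lambda$ and $m_\lambda$. Your ``cascade'' description of how the single extra row propagates through the stack of squares is exactly the paper's explicit list of new square sizes $(d'_{r-i}{+}1,\,d_{r-i+1},\dots,d_{r-i+m_\lambda-2},\,d_{r-i+m_\lambda-1}{-}1,\,d_{r-i+m_\lambda},\dots)$ phrased more conceptually, and your remark that $d_{r-i+m_\lambda-1}>1$ is automatic---because $m_\lambda\in S_\lambda$ makes the first part of the non-empty $m_\lambda$-th square strictly smaller than $d_{r-i+m_\lambda-1}$---is correct and shows that the corresponding sub-case in the paper's own proof is in fact vacuous.
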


\begin{proof}
 The proof of the first two items is obvious by definition of $\mathcal{D}_{r,i}.$
 In order to prove the third one, let us consider a partition $\lambda \in \mathcal{D}_{r,i}$ for $2\leq i \leq r-1$. 
 
\begin{itemize}
\item If $\lambda$ has at most $\ell$ Durfee rectangles with $1\leq \ell \leq r-i-1$, or if it has $r-i-1$ Durfee rectangles and $i-1$ Durfee squares with at least one Durfee rectangle of height zero, then by definition $\lambda \in \mathcal{D}_{r-1,i}$.
 \item If $\lambda$ has exactly $r-i$ Durfee rectangles of heights $d'_1\geq \cdots \geq d'_{r-i}>0$ and nothing after its last rectangle, then it also has $r-i-1$ Durfee rectangles of heights $d'_1 \geq \cdots \geq d'_{r-i-1}$ and one Durfee square of size $d'_{r-i}$, and thus  $\lambda \in \mathcal{D}_{r-1,i}$. 
 \item If $\lambda$ has exactly $r-i$ Durfee rectangles of heights $d'_1\geq \cdots \geq d'_{r-i}>0$ and $s$ Durfee squares of sizes $d_{r-i+1} \geq \cdots \geq d_{r-i+s}$ with $1\leq s \leq i-1$:
 \begin{itemize}
 \item If $S_\lambda = \emptyset$, then in order to  determine the $(r-i-1)-$Durfee  dissection of $\lambda,$ we consider the first $r-i-1$ rectangles (obtained from its $(r-i)$-Durfee dissection) and we extend the $(r-i)$-th Durfee rectangle (again of its $(r-i)$-Durfee dissection) to the bottom in order to obtain a Durfee square of size $d'_{r-i}+1$. We then obtain  the $(r-1-i)$-Durfee dissection by considering the $s-1$  Durfee squares which are induced by the first $s-1$  Durfee squares of the $(r-i)$-Durfee dissection as follows:
   
\begin{itemize} 
\item If $d_{r-i+s}=1$, then $\lambda$ has exactly $r-i-1$ Durfee rectangles of heights $d'_1 \geq \cdots \geq  d'_{r-i-1}$ and $s$ Durfee squares of sizes $(d'_{r-i}+1)\geq d_{r-i+1} \geq \cdots \geq d_{r-i+s-1}$, and nothing below. Thus $\lambda \in \mathcal{D}_{r-1,i}$. 
\item If $d_{r-i+s}>1$, then $\lambda$  has exactly $r-i-1$ Durfee rectangles of heights $d'_1 \geq \cdots \geq d'_{r-i-1}$ and $s+1$ Durfee squares of sizes $(d'_{r-i}+1)\geq d_{r-i+1} \geq \cdots \geq d_{r-i+s-1} \geq (d_{r-i+s}-1)$. Thus if $1\leq s \leq i-2$, then $\lambda  \in \mathcal{D}_{r-1,i}$ and if  $s=i-1$, then $\lambda \not \in \mathcal{D}_{r-1,i}$. 
\end{itemize}
 \item If $S_\lambda \neq \emptyset$ and $m_\lambda=1$, then in order to determine the Durfee square right after the $(r-i-1)$-th Durfee rectangle, we have to reduce the $(r-i)$-th Durfee rectangle from the right to obtain a Durfee square of size $d'_{r-i}$. Then $\lambda$ has exactly $r-i-1$ Durfee rectangles of heights $d'_1\geq \cdots \geq d'_{r-i-1}>0$ and $s+1$ Durfee squares of sizes $d'_{r-i}\geq d_{r-i+1} \geq \cdots \geq d_{r-i+s}$. Thus if $1\leq s \leq i-2$, then $\lambda  \in \mathcal{D}_{r-1,i}$ and if  $s=i-1$, then $\lambda \not \in \mathcal{D}_{r-1,i}$. 
  \item If $S_\lambda \neq \emptyset$ and $2\leq m_\lambda\leq s$,  then in order to determine the Durfee square right after the $(r-i-1)$-th Durfee rectangle, we extend the $(r-i)$-th Durfee rectangle to the bottom in order to obtain a Durfee square of size $d'_{r-i}+1$. 
We then obtain in the $(r-1-i)$-Durfee dissection the first $m_{\lambda}-2$  Durfee squares which are induced by the first $m_{\lambda}-2$ Durfee squares appearing in the $(r-i)$-Durfee dissection as follows:  

\begin{itemize}
 \item If $d_{r-i+m_{\lambda}-1}=1$, then $\lambda$ has exactly $r-i-1$ rectangles of heights $d'_1 \geq \cdots \geq d'_{r-i-1}>0$ and $s$ squares of sizes $(d'_{r-i}+1)\geq d_{r-i+1} \geq \cdots \geq d_{r-i+m_{\lambda}-2}\geq d_{r-i+m_{\lambda}}\geq \cdots \geq d_{r-i+s}$, and therefore $\lambda \in \mathcal{D}_{r-1,i}$. Note that in this case $d_{r-i+m_\lambda}=\cdots =d_{r-i+s}=1$.
 \item If $d_{r-i+m_{\lambda}-1}>1$, then $\lambda$ has exactly $r-i-1$ Durfee rectangles of heights $d'_1 \geq \cdots \geq d'_{r-i-1}>0$ and $s+1$ Durfee squares of sizes $(d'_{r-i}+1)\geq d_{r-i+1} \geq \cdots \geq d_{r-i+m_{\lambda}-2}\geq (d_{r-i+m_{\lambda}-1}-1) \geq d_{r-i+m_{\lambda}}\geq \cdots \geq d_{r-i+s}$. Thus if $1\leq s \leq i-2$, then $\lambda  \in \mathcal{D}_{r-1,i}$, and if  $s=i-1$, then $\lambda \not \in \mathcal{D}_{r-1,i}$. 
  \end{itemize} 
 \end{itemize} 
  \end{itemize}
 \end{proof}


We also need the following key proposition (note that it is not exactly the converse of Proposition~\ref{1}).


\begin{Proposition}\label{mon. gen.} For every integer $r\geq 2$, for all $1\leq i \leq r$, if $\lambda \in \mathcal{D}_{r,i} \setminus \mathcal{D}_{r-1,i}$, then
$$x_\lambda =\underbrace{x_{n_{1,1}}}_{\text {first block}} \underbrace{ x_{n_{2,1}} \cdots  x_{n_{2,f(2)}} }_{\text {second block}} \cdots \underbrace{ x_{n_{r-1,1}} \cdots  x_{n_{r-1,f(r-1)}} }_{\text {{$(r-1)$-th block}}}   \underbrace{ x_{n_{r,1}} \cdots  x_{n_{r,\ell}} }_{\text {$r$-th block}},$$ 
where $0\leq \ell<f(r)$. 
\end{Proposition}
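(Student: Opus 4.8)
The plan is to prove Proposition~\ref{mon. gen.} by induction on $r$, running in reverse the peeling argument already used in the proof of Proposition~\ref{1}. For the base case $r=2$ I would argue directly from the situation of Figure~\ref{fig:prop43}: a partition in $\mathcal{D}_{2,i}\setminus\mathcal{D}_{1,i}$ (recall $\mathcal{D}_{1,1}=\mathcal{D}_{1,2}=\emptyset$) consists of a single Durfee rectangle (if $i=1$) or a single Durfee square (if $i=2$) with nothing below it, and reading off its parts immediately gives a two-block monomial $x_{n_{1,1}}x_{n_{2,1}}\cdots x_{n_{2,\ell}}$ with $0\leq\ell<f(2)$, as required.

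For the inductive step, take $\lambda\in\mathcal{D}_{r,i}\setminus\mathcal{D}_{r-1,i}$. By Lemma~\ref{43}, $\lambda$ has exactly $r-i$ Durfee rectangles of positive height followed by $i-1$ Durfee squares, subject to the side conditions on $S_\lambda$ and $m_\lambda$ when $2\leq i\leq r-1$. I would remove the topmost Durfee rectangle $D'_1$ (or the topmost Durfee square when $i=r$) together with the $d'_1$ rows it occupies, obtaining a partition $\lambda'$ with exactly $r-i-1$ rectangles and $i-1$ squares. The decisive observation is that this deletion touches only the largest parts and leaves the bottom of the diagram — the $i-1$ Durfee squares and the lowest rectangle $D'_{r-i}$ — unchanged; hence $S_{\lambda'}=S_\lambda$, $m_{\lambda'}=m_\lambda$, and all relevant square sizes are preserved. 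Consequently the conditions of Lemma~\ref{43} placing $\lambda$ in $\mathcal{D}_{r,i}\setminus\mathcal{D}_{r-1,i}$ are \emph{verbatim} the conditions placing $\lambda'$ in $\mathcal{D}_{r-1,i}\setminus\mathcal{D}_{r-2,i}$ (the number of squares $i-1$ being the same), so this membership transfer is essentially free. The induction hypothesis then gives that $x_{\lambda'}$ is a block monomial with blocks of sizes $1,f_{r-1,i}(2),\dots,f_{r-1,i}(r-2)$ and a final $(r-1)$-th block of $\ell'$ parts, with $0\leq\ell'<f_{r-1,i}(r-1)$.

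It remains to reinstate the $d'_1$ removed rows and verify that the outcome is a block monomial of the shape claimed for $\lambda$. Since $f_{r,i}(j)=f_{r-1,i}(j)$ for $1\leq j\leq r-1$, the first $r-2$ blocks of $x_{\lambda'}$ are already the first $r-2$ blocks of $x_\lambda$; in particular the smallest part of block $r-2$, which determines $f(r-1)$, is the same for $\lambda$ and $\lambda'$. As the restored rows are the largest parts of $\lambda$, they must first complete block $r-1$ from $\ell'$ up to $f(r-1)$ parts and then open a new $r$-th block of some size $\ell$, so that $d'_1=\bigl(f(r-1)-\ell'\bigr)+\ell$. The heart of the proof is to establish $0\leq\ell<f(r)$, which is exactly the converse of the inequalities~\eqref{eq:star} proved in Lemma~\ref{fst. rec. }: the fact that $D'_1$ genuinely fits forces block $r-1$ to be completed and hence $\ell\geq0$, while the maximality of $D'_1$ forces the first part below it to be small enough that the new block cannot reach size $f(r)$.

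I expect this last step — converting the geometric maximality of $D'_1$ into the arithmetic bound $0\leq\ell<f(r)$, and checking the boundary cases $\ell'=0$ as well as the transition between the rectangle and square regimes (i.e. whether one peels a square when $i=r$ or a rectangle when $i<r$) — to be the main obstacle; the membership transfer through Lemma~\ref{43} and the block-size bookkeeping are then routine. Together with Proposition~\ref{1}, this supplies the converse inclusion needed to run the induction on the exact sequence~\eqref{seq} and thereby yields $\mathcal{C}_{r,i}=\mathcal{D}_{r,i}$ in Theorem~\ref{thm:algebraic}.
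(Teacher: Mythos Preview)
Your proposal is correct and follows essentially the same route as the paper's proof: induction on $r$, peeling off the top Durfee rectangle (or square when $i=r$), invoking Lemma~\ref{43} to see that the truncated partition $\lambda'$ lies in $\mathcal{D}_{r-1,i}\setminus\mathcal{D}_{r-2,i}$ via the preservation of $S_\lambda$ and $m_\lambda$, applying the induction hypothesis, and then reinstating the removed rows while checking the block-size inequalities $0\le\ell<f(r)$ from the maximality of the peeled rectangle. The only organizational difference is that the paper separates the cases $i=r$, $i=r-1$, and $1\le i\le r-2$ into three parallel inductions (because the formula for $f(r-1)$ and the shape of $\lambda'$ change at these boundary values), whereas you run a single induction on $r$ over all $i$ simultaneously; both schemes work, and you have correctly flagged the rectangle/square transition as the place where the extra bookkeeping is needed.
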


\begin{proof}  
The proof is by induction on $r$. For notational reasons, the cases (1) $i=r$, (2) $i=r-1$, and (3) $1\leq i \leq r-2 $ are slightly different; one begins by the case  $i=r,$ then by induction goes to the case $i=r-1$ and finally the case $1\leq i \leq r-2$.  The proofs are similar, so we only give the proof in the case $1\leq i \leq r-2$, assuming that for any $r$ the proposition is true for $i=r$ and $i=r-1.$ 

\medskip
For the initial case ($r=3$ and $i=1$): let $\lambda \in \mathcal{D}_{3,1} \setminus \mathcal{D}_{2,1},$ then by Lemma~\ref{43}, $\lambda$ has exactly two Durfee rectangles and nothing below. If 
$$x_\lambda=x_{n_{1,1}}x_{n_{2,1}}\cdots x_{n_{2,\ell}},$$ 
where $\ell<f(2)=n_{1,1}-1$, then $\lambda$ has only one Durfee rectangle, which is a contradiction since this would imply that $\lambda \in \mathcal{D}_{2,1}.$

If 
$$x_\lambda=x_{n_{1,1}}x_{n_{2,1}}\cdots x_{n_{2,f(2)}}x_{n_{3,1}}\cdots x_{n_{3,\ell}},$$ 
where $\ell \geq f(3)=n_{2,f(2)}-1$, then $\lambda$ would have some parts after its two first Durfee rectangles; again this contradicts the hypothesis that $\lambda$ has exactly two Durfee rectangles. Thus $x_\lambda$ is as in the proposition.

\medskip
Now assume by induction that the proposition is true for $r-1$ and all $1\leq i \leq r-3 $; note also that by the case (2) for $r-1$ we have  the proposition for   $i=r-2.$ 

 We will show it for $r$ and all $1\leq i \leq r-2$. Let $\lambda \in \mathcal{D}_{r,i} \setminus \mathcal{D}_{r-1,i}.$ By Lemma~\ref{43}, we know the possible shapes of such a partition $\lambda$. We remove the first $d'_1$ parts of $\lambda$, where $d'_1$ denotes the height of the first Durfee rectangle. Let us denote by $\mu$ the resulting partition.

\begin{itemize}
\item If $i=1$, then $\lambda$ has exactly $r-1$ Durfee rectangles and nothing after its last rectangle. Thus $\mu$ has exactly $r-2$ Durfee rectangles, i.e $\mu \in \mathcal{D}_{r-1,1} \setminus \mathcal{D}_{r-2,1}.$
\item If $2\leq i \leq r-2$, then $\lambda$ has exactly $r-i$ Durfee rectangles and $i-1$ Durfee squares of sizes $d_{r-i+1}\geq \cdots \geq d_{r-1}$. Thus $\mu$ has exactly $r-i-1$ Durfee rectangles and $i-1$ Durfee squares of sizes $d_{r-i+1}\geq \cdots \geq d_{r-1}$. Note that $S_\lambda=S_\mu$ and $m_\lambda=m_\mu$ thus we have one of the following cases:
\begin{itemize}
\item $S_\mu=\emptyset$ and $d_{r-1}\geq 2$.
\item $S_\mu \neq \emptyset$ and $m_\mu=1$.
\item $S_\mu \neq \emptyset, 2\leq m_\mu\leq i-1$ and $d_{r-i+m_{\mu}-1}\geq 2$.
\end{itemize}
\end{itemize}
So in all cases, by Lemma~\ref{43}, we have $\mu \in \mathcal{D}_{r-1,i} \setminus \mathcal{D}_{r-2,i}$, and by the induction hypothesis
$$x_\mu =\underbrace{x_{n_{1,1}}}_{\text {first block}} \underbrace{ x_{n_{2,1}} \cdots  x_{n_{2,f(2)}} }_{\text {second block}} \cdots \underbrace{ x_{n_{r-2,1}} \cdots  x_{n_{r-2,f(r-2)}} }_{\text {$(r-2)$-th block}}   \underbrace{ x_{n_{r-1,1}} \cdots  x_{n_{r-1,\ell}} }_{\text {$(r-1)$-th block}},$$ 
where $0\leq \ell<f(r-1).$
Recall that for all  $1\leq j \leq r-1$, we have $f_{r-1,i}(j)=f_{r,i}(j)=f(j).$ We add the monomial associated to the first Durfee rectangle of $\lambda$ to $x_\mu$ and we obtain:
 $$x_\lambda =\underbrace{x_{n_{1,1}}}_{\text {first block}} \underbrace{ x_{n_{2,1}} \cdots  x_{n_{2,f(2)}} }_{\text {second block}} \cdots \underbrace{ x_{n_{r-2,1}} \cdots  x_{n_{r-2,f(r-2)}} }_{\text {{$(r-2)$-th block}}}   x_{n_{r-1,1}} \cdots  x_{n_{r-1,\ell}} \  x_{i_1}\cdots x_{i_{d'_1}}.$$
 For all $1\leq j \leq f(r-1)-\ell$, we set $n_{r-1,\ell+j}:=i_{j}$ and define $f(r)=f_{r,i}(r)=n_{r-1,f(r-1)}-1.$
 
Note that on the one hand, since $\ell \geq 0$ and $d'_1$ is the height of the first Durfee rectangle of $\lambda$, we have
$$\ell+d'_1\geq d'_1 \geq n_{r-2,f(r-2)}-1=f(r-1).$$ 
This proves that the $(r-1)$-th block of $x_{\lambda}$ is $x_{n_{r-1,1}} \cdots x_{n_{r-1,f(r-1)}}$.

On the other hand, since $\ell<f(r-1)$ and $i_{f(r-1)-\ell}$ is a part of the first Durfee rectangle of $\lambda$, we have
$$\ell+d'_1 < f(r-1)+ d'_1 \leq f_{r-1}+i_{f(r-1)-\ell}-1 = f(r-1)+n_{r-1,f(r-1)}-1=f(r-1)+f(r).$$
This proves that the $r$-th block of $\lambda$ contains at most $f(r)-1$ variables, which concludes our proof.

\end{proof}

\begin{ex} Let $r=3$, $i=1$ and $\lambda=(6,5,5,4,3)$ as in Figure~\ref{fig:prop4.5}.

\begin{figure}[H]
\includegraphics[width=0.15\textwidth]{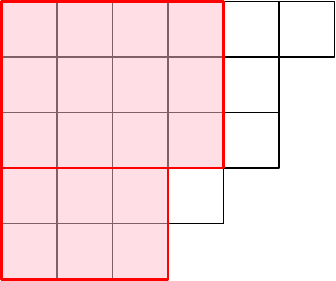}
\caption{The Young diagram of $\lambda=(6,5,5,4,3)$}
\label{fig:prop4.5}
\end{figure}
Note that this partition has exactly two Durfee rectangles and nothing below. Note also that the monomial associated to $\lambda$ is of the form:
$$x_{\lambda}=x_3x_4x^2_5x_6.$$    Take $f(1)=1$ and denote the smallest part of $\lambda$ (which is $3$) by $n_{1,1}$. By the definition of $f_{r,i}(j)=f(j)$ we have:
$$f(2)=n_{1,1}-1=3-1=2.$$
Thus, we take $n_{2,1}=4$, $n_{2,f(2)}=n_{2,2}=5$, $n_{3,1}=5$, and $n_{3,2}=6$. So we have:
$$x_{\lambda}=\underbrace{x_{3}}_{\text {(first block)}}\underbrace{x_{4}x_{5}}_{\text {(second block)}} \underbrace{x_{5}x_{6}}_{\text {(third block)}}   $$   

with $\ell=2<f(3)=n_{2,f(2)}-1=n_{2,2}-1=5-1=4.$
\end{ex}

We can now conclude this section with the proof of its main result, Theorem~\ref{thm:algebraic}.

\begin{proof}[Proof of Theorem~\ref{thm:algebraic}]
The proof is by induction on $r\geq 2,$ the case $r=2$ is immediate.
We assume that $\mathcal{C}_{r-1,i}=\mathcal{D}_{r-1,i}$ ; in particular the set of monomials in 
$\textbf{K}[x_1,x_2,\cdots]/I_{r-1,i}$ is in bijection with $\mathcal{D}_{r-1,i}.$ On the one hand,
by Proposition~\ref{1},  a monomial $x_\lambda$ of $I_{r-1,i}/I_{r,i}$ corresponds to a partition $\lambda \in\mathcal{D}_{r,i};$ such a monomial is in $I_{r-1,i},$ hence does not give rise to a monomial in $\textbf{K}[x_1,x_2,\cdots]/I_{r-1,i}$; we deduce again by the induction hypothesis that its associated partition $\lambda$ does not belong to $\mathcal{D}_{r-1,i}$. Therefore
if $x_\lambda$ is a monomial in $I_{r-1,i}/I_{r,i}$, then $\lambda \in  \mathcal{D}_{r,i} \setminus \mathcal{D}_{r-1,i}.$ On the other hand, by Proposition~\ref{mon. gen.}, we have that a monomial $x_\lambda$ associated with a partition $\lambda \in \mathcal{D}_{r,i} \setminus \mathcal{D}_{r-1,i}$ belongs to
$I_{r-1,i}/I_{r,i}$. We deduce from the exact sequence (\ref{seq}) that a basis of $\textbf{K}[x_1,x_2,\cdots]/I_{r,i}$ is given by the monomials associated with 
partitions in $\mathcal{D}_{r-1,i}$ and with the partitions in 
$\mathcal{D}_{r,i} \setminus \mathcal{D}_{r-1,i}$. This ends the proof.
   \end{proof}

\section{Proof of Theorem \ref{thm:bressoud3.3} via the Bailey lattice}
\label{sec:Bailey}

\subsection{The Bailey lattice}

Recall~\cite{AAR} that a Bailey pair $(\alpha_n,\,\beta_n)\equiv(\alpha_n,\,\beta_n)_{n\geq0}$ related to $a$ is a pair of sequences satisfying: 
\begin{equation}\label{bp}
\beta_n=\sum_{j=0}^n\frac{\alpha_j}{(q)_{n-j}(aq)_{n+j}}\;\;\;\;\forall\,n\in\mathbb{N}.
\end{equation}
The Bailey lemma describes how, from a Bailey pair, one can produce infinitely many of them.
\begin{Theorem}[Bailey lemma]\label{thm:baileylemma}
If $(\alpha_n,\,\beta_n)$ is a Bailey pair related to $a$, then so is $(\alpha'_n,\,\beta'_n)$, where
$$\alpha'_n={(\rho_1,\rho_2)_n(aq/\rho_1\rho_2)^n\over
(aq/\rho_1,aq/\rho_2)_n}\,\alpha_n$$
and
$$\beta'_n=\sum_{j=0}^n{(\rho_1,\rho_2)_j(aq/\rho_1\rho_2)_{n-j}(aq/\rho_1\rho_2)^j\over (q)_{n-j}(aq/\rho_1,aq/\rho_2)_n}\,\beta_j.$$
\end{Theorem}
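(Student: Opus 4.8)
The plan is to verify directly that the new pair $(\alpha'_n,\beta'_n)$ satisfies the defining relation~\eqref{bp} of a Bailey pair related to $a$, that is, to check that $\beta'_n=\sum_{k=0}^n\alpha'_k/\big((q)_{n-k}(aq)_{n+k}\big)$. First I would start from the given expression for $\beta'_n$ and insert the hypothesis that $(\alpha_n,\beta_n)$ is a Bailey pair, i.e. replace each $\beta_j$ by $\sum_{k=0}^j\alpha_k/\big((q)_{j-k}(aq)_{j+k}\big)$. This turns $\beta'_n$ into a double sum over $0\le k\le j\le n$.

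Next I would interchange the order of summation so that $k$ runs on the outside and $j$ on the inside, pulling out the common factor $\alpha_k/(aq/\rho_1,aq/\rho_2)_n$. The goal then becomes to show that the coefficient of $\alpha_k$ is exactly $\alpha'_k/\big(\alpha_k(q)_{n-k}(aq)_{n+k}\big)$; concretely, one must evaluate in closed form the inner sum
\[
\sum_{j=k}^n\frac{(\rho_1,\rho_2)_j\,(aq/\rho_1\rho_2)_{n-j}\,(aq/\rho_1\rho_2)^j}{(q)_{n-j}\,(q)_{j-k}\,(aq)_{j+k}}.
\]

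The heart of the argument, and the main obstacle, is this last evaluation. After the substitution $j=k+m$ with $0\le m\le n-k$ and a careful splitting of the $q$-shifted factorials (in particular writing $(aq)_{2k+m}=(aq)_{2k}(aq^{2k+1})_m$ and $(\rho_1,\rho_2)_{k+m}=(\rho_1,\rho_2)_k(\rho_1 q^k,\rho_2 q^k)_m$, and extracting a $q^{-(n-k)}$-shifted factor from $(q)_{n-m-k}$ via $(q^{-(n-k)})_m$), the inner sum becomes a terminating, balanced (Saalschützian) ${}_3\phi_2$ series in the variable $m$. I would then apply the $q$-Pfaff--Saalschütz summation to obtain its value as a ratio of $q$-shifted factorials. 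The delicate part is the bookkeeping: one has to track the power $(aq/\rho_1\rho_2)^{k+m}$, confirm that the three numerator and two denominator parameters really satisfy the balancing condition required by $q$-Pfaff--Saalschütz, and reassemble the resulting factors correctly.

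Finally, I would simplify the closed form and check that the inner sum equals $(\rho_1,\rho_2)_k(aq/\rho_1\rho_2)^k(aq/\rho_1,aq/\rho_2)_n/\big((aq/\rho_1,aq/\rho_2)_k(q)_{n-k}(aq)_{n+k}\big)$. The factor $(aq/\rho_1,aq/\rho_2)_n$ then cancels against the prefactor $1/(aq/\rho_1,aq/\rho_2)_n$, and what remains, multiplied by $\alpha_k$, is precisely $\alpha'_k/\big((q)_{n-k}(aq)_{n+k}\big)$. Summing over $k$ yields the desired identity and completes the proof.
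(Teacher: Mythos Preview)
Your proposal is correct and is, in fact, the standard proof of the Bailey lemma as given in the references the paper cites (e.g.\ Andrews--Askey--Roy~\cite{AAR}). Note, however, that the paper itself does \emph{not} supply a proof of this theorem: it merely states the Bailey lemma as a known tool, with attribution, and then uses it. So there is no ``paper's own proof'' to compare against.

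That said, your outline is exactly right. Substituting the Bailey-pair relation for $\beta_j$, interchanging the two sums, and shifting $j\mapsto k+m$ does produce a terminating balanced ${}_3\phi_2$ in $m$, namely (up to the prefactors you describe)
\[
{}_3\phi_2\!\left[\begin{matrix} q^{-(n-k)},\,\rho_1 q^{k},\,\rho_2 q^{k}\\ aq^{2k+1},\,\rho_1\rho_2 q^{k-n}/a\end{matrix};\,q,\,q\right],
\]
and the balancing condition $q^{-(n-k)}\cdot\rho_1 q^{k}\cdot\rho_2 q^{k}\cdot q = aq^{2k+1}\cdot \rho_1\rho_2 q^{k-n}/a$ is readily checked. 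The $q$-Pfaff--Saalsch\"utz summation~\cite[Appendix, (II.12)]{GR} then evaluates this as
\[
\frac{(aq^{k+1}/\rho_1,\,aq^{k+1}/\rho_2)_{n-k}}{(aq^{2k+1},\,aq/\rho_1\rho_2)_{n-k}},
\]
and the remaining bookkeeping (combining $(aq)_{2k}(aq^{2k+1})_{n-k}=(aq)_{n+k}$ and $(aq/\rho_1)_k(aq^{k+1}/\rho_1)_{n-k}=(aq/\rho_1)_n$, etc.) delivers precisely the claimed coefficient of $\alpha_k$. There is no gap in your plan.
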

In the sequel we will consider the following particular case of Theorem~\ref{thm:baileylemma}, obtained by letting $\rho_1,\rho_2\to\infty$.
\begin{Corollary}\label{coro:baileylemma}
If $(\alpha_n,\,\beta_n)$ is a Bailey pair related to $a$, then so is $(\alpha'_n,\,\beta'_n)$, where
$$\alpha'_n=a^nq^{n^2}\alpha_n\quad\mbox{and}\quad \beta'_n=\sum_{j=0}^n{a^jq^{j^2}\over (q)_{n-j}}\beta_j.$$
\end{Corollary}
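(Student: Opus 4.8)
The plan is to obtain Corollary~\ref{coro:baileylemma} by letting $\rho_1,\rho_2\to\infty$ directly in Theorem~\ref{thm:baileylemma} and simply identifying the limiting sequences. The key observation is that for each fixed $n$, the Bailey pair relation~\eqref{bp} for the transformed pair $(\alpha'_n,\beta'_n)$, namely
$$\beta'_n=\sum_{j=0}^n\frac{\alpha'_j}{(q)_{n-j}(aq)_{n+j}},$$
is a \emph{finite} sum. Since it holds for all finite values of $\rho_1,\rho_2$ (this is precisely Theorem~\ref{thm:baileylemma}) and both sides are rational in $\rho_1,\rho_2$, it survives passage to the limit as soon as the individual limits $\lim_{\rho_1,\rho_2\to\infty}\alpha'_j$ and $\lim_{\rho_1,\rho_2\to\infty}\beta'_n$ exist and are finite. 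Thus the entire task reduces to computing these two limits and checking that they coincide with the expressions in the statement; the limiting relation then holds automatically, so the resulting sequences form again a Bailey pair related to $a$.

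To evaluate the limits I would use the two elementary asymptotics
$$(\rho)_m=\prod_{k=0}^{m-1}(1-\rho q^k)\sim(-\rho)^m q^{\binom{m}{2}}\qquad(\rho\to\infty)$$
and $(aq/\rho)_m\to 1$ as $\rho\to\infty$, both valid for every fixed nonnegative integer $m$. Applying these to
$$\alpha'_n=\frac{(\rho_1)_n(\rho_2)_n}{(aq/\rho_1)_n(aq/\rho_2)_n}\left(\frac{aq}{\rho_1\rho_2}\right)^n\alpha_n,$$
the denominator tends to $1$, while the numerator behaves like $(-\rho_1)^n(-\rho_2)^nq^{2\binom{n}{2}}(aq/\rho_1\rho_2)^n\alpha_n$. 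The powers of $\rho_1,\rho_2$ cancel, leaving $a^nq^n\,q^{n^2-n}\alpha_n=a^nq^{n^2}\alpha_n$, exactly as claimed.

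For $\beta'_n$ the sum over $j$ is finite, so I would pass to the limit term by term. In the $j$-th summand
$$\frac{(\rho_1)_j(\rho_2)_j(aq/\rho_1\rho_2)_{n-j}(aq/\rho_1\rho_2)^j}{(q)_{n-j}(aq/\rho_1)_n(aq/\rho_2)_n}\beta_j,$$
the factors $(aq/\rho_1\rho_2)_{n-j}$, $(aq/\rho_1)_n$ and $(aq/\rho_2)_n$ all tend to $1$, while $(\rho_1)_j(\rho_2)_j\sim(-\rho_1)^j(-\rho_2)^jq^{2\binom{j}{2}}$; combining with $(aq/\rho_1\rho_2)^j=a^jq^j/(\rho_1\rho_2)^j$ again cancels the powers of $\rho_1,\rho_2$ and yields $a^jq^{j^2}\beta_j/(q)_{n-j}$. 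Summing over $j$ gives the stated formula for $\beta'_n$. I do not expect any genuine obstacle here: the only points requiring care are the bookkeeping of the powers of $q$ coming from the factors $q^{\binom{m}{2}}$, and the verification that letting $\rho_1,\rho_2\to\infty$ is legitimate, which is guaranteed because, for fixed $n$, every expression involved is a finite rational combination that is regular at infinity.
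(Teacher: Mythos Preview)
Your proposal is correct and follows exactly the approach indicated in the paper, which simply states that the corollary is ``obtained by letting $\rho_1,\rho_2\to\infty$'' in Theorem~\ref{thm:baileylemma} without spelling out the computation. Your detailed verification of the limits and the remark that only finite sums are involved (so the passage to the limit is unproblematic) supply precisely the bookkeeping the paper leaves implicit.
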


In~\cite{AAR}, the following unit Bailey pair (related to $a$) is considered:
\begin{equation}\label{ubp}
\alpha_n^{(0)}={(-1)^nq^{n(n-1)/2}(1-aq^{2n})(a)_n\over (1-a)(q)_n},\qquad \beta_n^{(0)}=\delta_{n,0},
\end{equation}
and two iterations of Theorem~\ref{thm:baileylemma} applied to~\eqref{ubp} yields Watson's transformation \cite[Appendix, (III.18)]{GR}, which is a six  parameters finite extension of~\eqref{rr1} and~\eqref{rr2}.\\
Moreover, iterating $r\geq2$ times Corollary~\ref{coro:baileylemma} to the unit Bailey pair~\eqref{ubp} yields a new Bailey pair $(\alpha_n^{(r)},\,\beta_n^{(r)})$ with
$$\alpha_n^{(r)}=a^{rn}q^{rn^2}\alpha_n^{(0)}$$
and
$$\beta_n^{(r)}=\sum_{n\geq s_1\geq\dots\geq s_{r}\geq0}\frac{a^{s_1+\dots+s_r}q^{s_1^2+\dots+s_{r}^2}}{(q)_{n-s_1}(q)_{s_1-s_2}\dots(q)_{s_{r-1}-s_r}}\beta_{s_r}^{(0)}.
$$
Applying the definition~\eqref{bp} to this Bailey pair and letting $n\to\infty$ gives
\begin{equation*}
\sum_{s_1\geq\dots\geq s_{r-1}\geq0}\frac{a^{s_1+\dots+s_{r-1}}q^{s_1^2+\dots+s_{r-1}^2}}{(q)_{s_1-s_2}\dots(q)_{s_{r-2}-s_{r-1}}(q)_{s_{r-1}}}
=\frac{1}{(aq)_\infty}\sum_{j\geq0}a^{rj}q^{rj^2}(-1)^jq^{j(j-1)/2}{(1-aq^{2j})(a)_j\over (1-a)(q)_j}.
\end{equation*}
Now taking $a=1$, the right-hand side of this formula is equal to
\begin{align*}
\frac{1}{(q)_\infty}\left(1+\sum_{j\geq1}q^{rj^2}(-1)^jq^{j(j-1)/2}(1+q^j)\right)&=\frac{1}{(q)_\infty}\sum_{j\in\mathbb{Z}}(-1)^jq^{(2r+1)j^2/2}q^{j/2}\\
&=\frac{(q^{2r+1},q^r,q^{r+1};q^{2r+1})_\infty}{(q)_\infty},
\end{align*}
where the last equality follows from the Jacobi triple product identity~\cite[Appendix, (II.28)]{GR}
\begin{equation}\label{jtp}
\sum_{j\in\mathbb{Z}}(-1)^jz^jq^{j(j-1)/2}=(q,z,q/z;q)_\infty,
\end{equation}
with $q$ replaced by $q^{2r+1}$ and $z=q^r$.

 Therefore we get the $i=0$ case of~\eqref{AGP} (equivalently the $i=r$ instance of~\eqref{eq:AGri}). In the same way, one gets the $i=r-1$ case of~\eqref{AGP} (equivalently the $i=1$ instance of~\eqref{eq:AGri}) by choosing $a=q$ above.

This method is an efficient way to show these two instances of the Andrews--Gordon identities, but it fails when one aims to prove them in such a direct way for general $i$. The concept of Bailey lattice was therefore developed in~\cite{AAB} to prove~\eqref{eq:AGri} for general $i$ in a similar fashion (see also~\cite{ASW, BIS} for alternative methods avoiding the use of the Bailey lattice). In~\cite{AAB}, the authors change the parameter $a$ at some point before iterating the Bailey lemma, therefore providing a concept of Bailey lattice instead of the above classical Bailey chain.  Here is the tool proved in~\cite{AAB}.
\begin{Theorem}[Bailey lattice]\label{thm:baileylattice}
If $(\alpha_n,\,\beta_n)$ is a Bailey pair related to $a$, then  $(\alpha'_n,\,\beta'_n)$  is a Bailey pair related to $a/q$, where
$$\alpha'_0=\alpha_0, \quad \alpha'_n=(1-a)\left(\frac{a}{\rho_1\rho_2}\right)^n\frac{(\rho_1,\rho_2)_n}{(a/\rho_1,a/\rho_2)_n}\left(\frac{\alpha_n}{1-aq^{2n}}-\frac{aq^{2n-2}\alpha_{n-1}}{1-aq^{2n-2}}\right),$$
and
$$\beta'_n=\sum_{j=0}^n{(\rho_1,\rho_2)_j(a/\rho_1\rho_2)_{n-j}(a/\rho_1\rho_2)^j\over (q)_{n-j}(a/\rho_1,a/\rho_2)_n}\,\beta_j.$$
\end{Theorem}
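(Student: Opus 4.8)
The plan is to deduce Theorem~\ref{thm:baileylattice} from the ordinary Bailey lemma (Theorem~\ref{thm:baileylemma}) by factoring the lattice step into two more elementary moves: a base change sending a Bailey pair related to $a$ to one related to $a/q$ while leaving the $\beta$-sequence untouched, followed by a single application of Theorem~\ref{thm:baileylemma} performed at the new base $a/q$. The observation motivating this is structural. In the stated $\alpha'_n$, the prefactor $(1-a)\big(a/\rho_1\rho_2\big)^n(\rho_1,\rho_2)_n/(a/\rho_1,a/\rho_2)_n$ is, up to the factor $(1-a)$, precisely the multiplier from Theorem~\ref{thm:baileylemma} \emph{evaluated at base $a/q$} (since then each $aq/\rho_i$ becomes $a/\rho_i$), while the $\beta'_n$ of Theorem~\ref{thm:baileylattice} is literally the $\beta'_n$ of Theorem~\ref{thm:baileylemma} with $a$ replaced by $a/q$. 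This suggests isolating the bracketed combination $\alpha_n/(1-aq^{2n})-aq^{2n-2}\alpha_{n-1}/(1-aq^{2n-2})$ as the effect of a base change.

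First I would prove the following base-change lemma: if $(\alpha_n,\beta_n)$ is a Bailey pair related to $a$ and one sets
$$\tilde\alpha_n:=(1-a)\left(\frac{\alpha_n}{1-aq^{2n}}-\frac{aq^{2n-2}\alpha_{n-1}}{1-aq^{2n-2}}\right)\qquad(\tilde\alpha_0=\alpha_0),$$
then $(\tilde\alpha_n,\beta_n)$ is a Bailey pair related to $a/q$. Since the defining relation~\eqref{bp} at base $a/q$ reads $\beta_n=\sum_{k=0}^n \tilde\alpha_k/\big((q)_{n-k}(a)_{n+k}\big)$, whereas $\beta_n=\sum_{k=0}^n \alpha_k/\big((q)_{n-k}(aq)_{n+k}\big)$ by hypothesis, I would treat the $\alpha_k$ as free parameters and compare the coefficient of each $\alpha_k$ on the two sides. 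On the $a/q$ side, $\alpha_k$ occurs in the $k$-th summand (through $\alpha_k/(1-aq^{2k})$) and in the $(k+1)$-th summand (through $-aq^{2k}\alpha_k/(1-aq^{2k})$); collecting these two contributions and clearing the $q$-Pochhammer symbols reduces the claim to the single rational identity
$$\frac{1}{(q)_{n-k}(aq)_{n+k}}=\frac{(1-a)}{(q)_{n-k}(a)_{n+k}(1-aq^{2k})}\left(1-\frac{aq^{2k}(1-q^{n-k})}{1-aq^{n+k}}\right),$$
which collapses after simplifying the bracket to $(1-aq^{2k})/(1-aq^{n+k})$ and using $(1-a)(aq)_{n+k}=(a)_{n+k+1}=(a)_{n+k}(1-aq^{n+k})$. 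The boundary values $k=0$ and $k=n$, where one of the two contributions is absent, must be checked separately but work out in the same way.

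Then I would apply Theorem~\ref{thm:baileylemma} to the Bailey pair $(\tilde\alpha_n,\beta_n)$ with $a$ replaced by $a/q$. Because $(a/q)\,q=a$, every occurrence of $aq/\rho_i$ in the lemma becomes $a/\rho_i$ and every occurrence of $aq/\rho_1\rho_2$ becomes $a/\rho_1\rho_2$; consequently the output $\beta'_n$ is exactly the expression in Theorem~\ref{thm:baileylattice}, and the output $\alpha'_n=\big[(\rho_1,\rho_2)_n(a/\rho_1\rho_2)^n/(a/\rho_1,a/\rho_2)_n\big]\,\tilde\alpha_n$ is exactly the stated $\alpha'_n$ once $\tilde\alpha_n$ is substituted. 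This proves that $(\alpha'_n,\beta'_n)$ is a Bailey pair related to $a/q$, as claimed.

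The main obstacle is really only the base-change lemma; everything else is a mechanical specialization of Theorem~\ref{thm:baileylemma}. Within that lemma the delicate point is the telescoping that couples the $\alpha_k$ coming from the $k$-th summand with the $\alpha_{k-1}$ coming from the previous one, together with the bookkeeping of the boundary terms $k=0,n$; this is precisely where the index shift $\alpha_n\mapsto\alpha_{n-1}$ in the statement originates. An alternative, more computational route would bypass the factorization and instead substitute the base-$a$ relation for $\beta_j$ directly into the given $\beta'_n$, interchange the order of summation, and evaluate the inner sum over $j$; there the obstruction is that the resulting ${}_3\phi_2$ is not Saalsch\"utzian but off balance by one power of $q$, so one must first split it via a contiguous relation into two balanced series and then sum each by the $q$-Pfaff--Saalsch\"utz theorem~\cite{GR}. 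The factorized approach above is preferable precisely because it confines all the $q$-series analysis to the already-established Theorem~\ref{thm:baileylemma}.
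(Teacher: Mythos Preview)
Your proof is correct. The base-change lemma is the crux and your verification goes through cleanly: collecting the coefficient of $\alpha_k$ from the $k$-th and $(k+1)$-th summands of the $a/q$-side, the bracket simplifies to $(1-aq^{2k})/(1-aq^{n+k})$, and using $(1-a)(aq)_{n+k}=(a)_{n+k+1}$ you recover exactly $1/\big((q)_{n-k}(aq)_{n+k}\big)$. The boundary $k=n$ also checks, and then Theorem~\ref{thm:baileylemma} at base $a/q$ gives precisely the stated $(\alpha'_n,\beta'_n)$.

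Note, however, that the paper itself does \emph{not} prove Theorem~\ref{thm:baileylattice}: it is quoted as ``the tool proved in~\cite{AAB}'' and used as a black box. So there is no in-paper proof to compare against. That said, your factorization into a parameter shift $a\mapsto a/q$ (keeping $\beta$ fixed) followed by an ordinary Bailey step is precisely the viewpoint alluded to in the paper's remark after Theorem~\ref{thm:baileylattice}, where~\cite[Theorem~3.1]{W} is cited for the observation that the Bailey lattice can be absorbed into the Bailey-chain framework. The original argument in~\cite{AAB} instead proceeds by a direct computation: one substitutes the base-$a$ relation for $\beta_j$ into the proposed $\beta'_n$, interchanges summations, and evaluates the inner sum, essentially the ``alternative, more computational route'' you sketch at the end. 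Your approach is cleaner for exactly the reason you give: it isolates all the $q$-series work inside the already-known Theorem~\ref{thm:baileylemma} and reduces the new content to a short telescoping identity.
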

Note that, as explained in~\cite[Theorem~3.1]{W}, both Theorems~\ref{thm:baileylemma} and~\ref{thm:baileylattice} can even be embedded in a single more general result, itself giving rise to infinitely many such lattices.

Again we will consider the special case where $\rho_1,\rho_2\to\infty$. More precisely, we will use the following consequence of Theorem~\ref{thm:baileylattice} obtained in~\cite[Corollary 4.2]{AAB} by iterating $r-i$ times Corollary~\ref{coro:baileylemma}, then using Theorem~\ref{thm:baileylattice} with $\rho_1,\rho_2\to\infty$, and finally $i-1$ times Corollary~\ref{coro:baileylemma} with $a$ replaced by $a/q$, and at the end letting $n\to\infty$.
\begin{Corollary}\label{coro:baileylattice}
If $(\alpha_n,\,\beta_n)$ is a Bailey pair related to $a$, then for all integers $0\leq i\leq r$, we have:
\begin{multline}\label{lattice}
\sum_{s_1\geq\dots\geq s_{r}\geq0}\frac{a^{s_1+\dots+s_r}q^{s_1^2+\dots+s_{r}^2-s_1-\dots-s_{i}}}{(q)_{s_1-s_2}\dots(q)_{s_{r-1}-s_r}}\beta_{s_r}=\frac{1}{(a)_\infty}\times\left(\alpha_0+\sum_{j\geq1}(1-a)a^{ij}q^{i(j^2-j)}\right.\\
\left.\left(\frac{a^{(r-i)j}q^{(r-i)j^2}\alpha_j}{1-aq^{2j}}-\frac{a^{(r-i)(j-1)+1}q^{(r-i)(j-1)^2+2j-2}\alpha_{j-1}}{1-aq^{2j-2}}\right)\right).
\end{multline}
\end{Corollary}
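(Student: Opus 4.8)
The plan is to prove~\eqref{lattice} by literally carrying out, step by step, the iteration scheme announced just before the statement, so that the argument reduces to a careful bookkeeping of how the powers of $a$ and $q$ and the nested $q$-shifted factorials propagate. Starting from the Bailey pair $(\alpha_n,\beta_n)$ related to $a$, I would apply Corollary~\ref{coro:baileylemma} exactly $r-i$ times, then Theorem~\ref{thm:baileylattice} once in the limit $\rho_1,\rho_2\to\infty$, then Corollary~\ref{coro:baileylemma} a further $i-1$ times with $a$ replaced by $a/q$, and finally substitute the resulting pair into the defining relation~\eqref{bp} before letting $n\to\infty$. The first block of $r-i$ applications of Corollary~\ref{coro:baileylemma} behaves exactly as in the chain computation carried out above for the unit pair, but keeping $\beta$ general: it produces a Bailey pair still related to $a$, with $\alpha$-side $a^{(r-i)n}q^{(r-i)n^2}\alpha_n$ and with an $(r-i)$-fold nested $\beta$-side running over variables $s_{i+1}\geq\cdots\geq s_r$, each contributing a factor $a^{s_k}q^{s_k^2}$ and no linear shift.

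The central step is the single application of Theorem~\ref{thm:baileylattice}, whose two limits $\rho_1,\rho_2\to\infty$ I would compute from $(\rho)_n\sim(-\rho)^nq^{n(n-1)/2}$ and $(a/\rho)_n\to1$. On the $\beta$-side this collapses to $\beta'_n=\sum_{j=0}^n\frac{a^jq^{j(j-1)}}{(q)_{n-j}}\beta_j$, which introduces the new variable $s_i$ together with its shift $q^{-s_i}$; on the $\alpha$-side it yields $\alpha'_0=\alpha_0$ and
$$\alpha'_n=(1-a)\,a^nq^{n(n-1)}\left(\frac{\alpha_n}{1-aq^{2n}}-\frac{aq^{2n-2}\alpha_{n-1}}{1-aq^{2n-2}}\right),$$
which is precisely the two-term difference appearing on the right-hand side of~\eqref{lattice}, and which changes the governing parameter from $a$ to $a/q$. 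Substituting the $\alpha$-side of the first block into this formula already produces the bracketed expression of~\eqref{lattice}.

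Next I would run the second block of $i-1$ iterations of Corollary~\ref{coro:baileylemma}, now with $a$ replaced by $a/q$: each multiplies the $\alpha$-side by $(a/q)^nq^{n^2}=a^nq^{n^2-n}$ and convolves the $\beta$-side with $\frac{(a/q)^jq^{j^2}}{(q)_{n-j}}$, thereby creating the outermost variables $s_1,\dots,s_{i-1}$ together with their shifts $q^{-s_1},\dots,q^{-s_{i-1}}$, so that exactly the first $i$ variables carry a factor $q^{-s_k}$. Collecting the accumulated $\alpha$-prefactors gives precisely $(1-a)\,a^{ij}q^{i(j^2-j)}$ times the difference above, while the surviving $j=0$ term is $\alpha_0$ coming from $\alpha'_0=\alpha_0$. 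Finally, inserting this pair, which is related to $a/q$, into~\eqref{bp} so that $((a/q)q)_{n+j}=(a)_{n+j}$, and letting $n\to\infty$, the factors $(q)_{n-s_1}$ and $(q)_{n-j}$ both tend to $(q)_\infty$ and cancel, while $(a)_{n+j}\to(a)_\infty$; after clearing the common $1/(q)_\infty$ one obtains exactly~\eqref{lattice}.

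I expect the only genuine difficulty to be computational rather than conceptual: correctly evaluating the $\rho_1,\rho_2\to\infty$ limit in Theorem~\ref{thm:baileylattice}, where the separate value $\alpha'_0=\alpha_0$ and the two-term difference must be treated with care, and then keeping exact track, through the parameter change $a\to a/q$, of how the quadratic exponents $s_k^2$ and the linear shifts $-s_k$ distribute over the $r$ summation variables. The extreme case $i=r$ merely has an empty first block and causes no trouble. The case $i=0$ carries no linear shift and formally involves an empty second block; here one applies instead the pure Bailey chain of $r$ iterations of Corollary~\ref{coro:baileylemma} with parameter $a$, giving $\frac{1}{(aq)_\infty}\sum_{j\geq0}a^{rj}q^{rj^2}\alpha_j$, and checks by a short reindexing—using $(a)_\infty=(1-a)(aq)_\infty$—that the two-term difference on the right-hand side of~\eqref{lattice} telescopes to exactly this expression, so both boundary cases are settled directly.
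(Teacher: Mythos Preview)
Your proposal is correct and follows precisely the approach the paper describes just before the statement: the paper does not give a detailed proof either but simply cites \cite[Corollary~4.2]{AAB} and sketches the same iteration scheme (apply Corollary~\ref{coro:baileylemma} $r-i$ times, then Theorem~\ref{thm:baileylattice} once with $\rho_1,\rho_2\to\infty$, then Corollary~\ref{coro:baileylemma} $i-1$ times with $a\to a/q$, and let $n\to\infty$). Your bookkeeping of the $\alpha$- and $\beta$-sides, the limiting computation in Theorem~\ref{thm:baileylattice}, and the separate treatment of the boundary case $i=0$ via telescoping are all accurate and fill in exactly the details the paper omits.
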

By applying Corollary~\ref{coro:baileylattice} to the unit Bailey pair~\eqref{ubp} with $a=q$, \eqref{eq:AGri} is proved in~\cite{AAB}, after factorizing the right-hand side by~\eqref{jtp} and replacing $i$ by $i-1$.

\subsection{Proof of Theorem~\ref{thm:bressoud3.3}}

Recall from the introduction that, omitting the dependence on $r$, the left-hand side of~\eqref{Br3.3} is written for $0\leq i\leq r-1$ as
$$S_i(q)=\sum_{s_1\geq\dots\geq s_{r-1}\geq0}\frac{q^{s_1^2+\dots+s_{r-1}^2-s_1-\dots-s_i}}{(q)_{s_1-s_2}\dots(q)_{s_{r-1}-s_{r-2}}(q)_{s_{r-1}}}.$$
We want to use Corollary~\ref{coro:baileylattice} with the unit Bailey pair~\eqref{ubp} with $a=1$ to compute $S_i(q)$. To do this, we first rewrite the right-hand side of~\eqref{lattice} by shifting the index $j$ to $j+1$ in the summation involving $\alpha_{j-1}$:
\begin{equation}\label{regroupement}
\frac{1-a^{i+1}}{(a)_\infty}\alpha_0+\frac{1-a}{(a)_\infty}\sum_{j\geq1}a^{ij}q^{i(j^2-j)}\frac{a^{(r-i)j}q^{(r-i)j^2}\alpha_j}{1-aq^{2j}}(1-a^{i+1}q^{(2i+2)j}).
\end{equation}
Now  we use $(1-a)/(a)_\infty=1/(aq)_\infty$ and take the unit Bailey pair~\eqref{ubp} with $a=1$ to derive 
$$S_i(q)=\frac{1}{(q)_\infty}\left(i+1+\sum_{j\geq1}(-1)^jq^{rj^2-ij+j(j-1)/2}\,\frac{1-q^{(2i+2)j}}{1-q^{j}}\right).$$
Expanding 
$$\frac{1-q^{(2i+2)j}}{1-q^{j}}=\left(1+q^{(i+1)j}\right)\sum_{k=0}^iq^{kj},$$
we obtain
\begin{eqnarray*}
S_i(q)&=&\frac{1}{(q)_\infty}\left(i+1+\sum_{j\geq1}\sum_{k=0}^i(-1)^jq^{(2r+1)j^2/2+(2k-2i-1)j/2}\left(1+q^{(i+1)j}\right)\right)\\
&=&\frac{1}{(q)_\infty}\left(i+1+\sum_{j\geq1}(-1)^jq^{(2r+1)j^2/2}\left(\sum_{k=0}^iq^{(2k-2i-1)j/2}+\sum_{k=0}^iq^{(2k+1)j/2}\right)\right)\\
&=&\frac{1}{(q)_\infty}\left(i+1+\sum_{j\geq1}(-1)^jq^{(2r+1)j^2/2}\left(\sum_{k=0}^iq^{(2k-2i-1)j/2}+\sum_{k=0}^iq^{(2i-2k+1)j/2}\right)\right)\\
&=&\sum_{k=0}^i\frac{1}{(q)_\infty}\sum_{j\in\mathbb{Z}}(-1)^jq^{(2r+1)j^2/2}q^{(2k-2i-1)j/2}\\
&=&\sum_{k=0}^i\frac{1}{(q)_\infty}\sum_{j\in\mathbb{Z}}(-1)^jq^{(2r+1)j(j-1)/2}q^{(k+r-i)j}.
\end{eqnarray*}
Finally, by using~\eqref{jtp} with $q$ replaced by $q^{2r+1}$ and $z=q^{k+r-i}$, we get the right-hand side of~\eqref{Br3.3}.

\section{Bijection between $\mathcal{A}_{r,r-1}$ and $\mathcal{D}_{r,r-1}$ for all $r\geq 2$}\label{sec:bij}

We conclude this paper by giving a weight-preserving bijection between $\mathcal{A}_{r,r-1}$ and $\mathcal{D}_{r,r-1}$ for all $r\geq 2$. By Theorem~\ref{th:Durfee_Bottom}, this  will provide a bijective proof of the original conjecture in the case $i=r-1$.
Recall from the end of Section~\ref{sec:outline} that the bijections between  $\mathcal{A}_{r,1}$ and $\mathcal{D}_{r,1}$ on the one hand, and between  $\mathcal{A}_{r,r}$ and $\mathcal{D}_{r,r}$ on the other hand, are almost trivial. Therefore, this is the only non-trivial case that we are able to treat bijectively.

For all integers $r \geq 2$ and $1\leq i  \leq r$, let us denote by $\mathcal{A}_{r,i}(n)$ (resp. $\mathcal{D}_{r,i}(n)$) the set of partitions of $n$ belonging to $\mathcal{A}_{r,i}$ (resp. $\mathcal{D}_{r,i}$). We have the following.

 \begin{Theorem}\label{bijection}
For all $r\geq 2,$  there is a bijection $T$ between $\mathcal{D}_{r,r-1}(n)$ and  $\mathcal{A}_{r,r-1}(n)$.
 \end{Theorem}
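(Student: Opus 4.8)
The plan is to construct $T$ and its inverse explicitly by relocating the single Durfee rectangle from one end of the Young diagram to the other. Recall the structure of the two sides: a partition in $\mathcal{D}_{r,r-1}$ consists of one horizontal Durfee rectangle $D'_1$ (of larger side $d_1$) sitting on top of $r-2$ Durfee squares $D_2,\dots,D_{r-1}$ of weakly decreasing sides $d_2\ge\cdots\ge d_{r-1}$, with nothing below $D_{r-1}$; dually, a partition in $\mathcal{A}_{r,r-1}$ consists of $r-2$ Durfee squares $A_1,\dots,A_{r-2}$ of sides $n_1\ge\cdots\ge n_{r-2}$ on top of one \emph{vertical} Durfee rectangle $A'_{r-1}$ (of smaller side $n_{r-1}$) at the bottom, subject to the condition that the last row of $A'_{r-1}$ is an actual part. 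So $T$ must interchange the two ends and transpose the rectangle (horizontal $\leftrightarrow$ vertical). A naive first guess, matching the rectangles cell-for-cell (which forces $d_1=n_{r-1}+1$, since a $d_1\times(d_1-1)$ block and an $n_{r-1}\times(n_{r-1}+1)$ block carry the same number of cells exactly then) and matching the squares in order by $d_{j+1}=n_j$, already violates the monotonicity $d_1\ge d_2$ unless all sides are nearly equal. Hence $T$ cannot act block by block and must genuinely redistribute cells across the whole staircase; this is what makes the case $i=r-1$ the only nontrivial one we can treat bijectively.

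I would therefore build $T$ by induction on $r$, using the two already-understood extremes as the base of the recursion: the trivial identity $\mathcal{D}_{r-1,r-1}=\mathcal{A}_{r-1,r-1}$ for the block of squares, and the rotation bijection $\mathcal{A}_{\cdot,1}\cong\mathcal{D}_{\cdot,1}$, which simply conjugates a lone rectangle. Concretely, given $\sigma\in\mathcal{A}_{r,r-1}$, I would detach its bottom vertical rectangle $A'_{r-1}$ together with its arm $\mu_{r-1}$, conjugate this block into horizontal form, and then sweep it upward past the $r-2$ squares so that it becomes the topmost Durfee rectangle $D'_1$ while the squares slide down to positions $D_2,\dots,D_{r-1}$; the arms $\mu_1,\dots,\mu_{r-1}$ are carried along and re-indexed, with the side lengths rebalanced as the rectangle crosses each square (this is the step that produces the global redistribution, not a mere permutation of blocks). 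The inverse $T^{-1}$ reverses the sweep: it brings the top horizontal rectangle of a $\mathcal{D}_{r,r-1}$-partition down to the bottom, conjugates it to vertical form, and then reads off the square block from the top.

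Weight preservation will be automatic, since conjugation of a block and the row-rearrangements in the sweep only move cells around. The substance of the proof is to certify that the output genuinely lies in the target set: that after the sweep the horizontal block is exactly the first Durfee rectangle, that the resulting squares appear in the order $d_2\ge\cdots\ge d_{r-1}$, and that nothing survives below $D_{r-1}$. For this I would lean on the height estimates already established for bottom and Durfee dissections, namely Proposition~\ref{prop:height_squares} and Proposition~\ref{prop:height_rectangles}, which pin down precisely when a prescribed stack of squares and rectangles is realized as the actual Durfee dissection of a partition; combined with Theorem~\ref{th:Durfee_Bottom} these let me verify that the image has the correct dissection type at every intermediate stage of the sweep. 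Checking that $T$ and $T^{-1}$ really invert each other then reduces to confirming that the rebalancing rule applied while crossing a square is undone by the reverse rule.

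The hard part will be the transition region where the rectangle meets the squares. Because the rectangle changes orientation and its sides shift by one unit as it passes each square, the sweep must be defined so that the nesting inequalities among $d_1,\dots,d_{r-1}$ (respectively $n_1,\dots,n_{r-1}$) are preserved at every step, and the extremal conventions must be tracked throughout: rectangles of size $1\times 0$, the at most $r-2$ parts equal to $1$, and the empty-rectangle boundary cases. The most delicate point is to show that the defining condition of $\mathcal{A}_{r,r-1}$ (the last row of $A'_{r-1}$ is an actual part) is matched under $T$ exactly by the condition defining $\mathcal{D}_{r,r-1}$ (nothing lies below $D_{r-1}$). Establishing this equivalence, together with the verification that the Durfee dissection type is preserved through the entire sweep, is where I expect the real work of the argument to concentrate.
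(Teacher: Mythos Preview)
Your proposal is a strategy sketch, not a proof: the central object---the ``sweep'' that carries a vertical rectangle from the bottom of an $\mathcal{A}_{r,r-1}$-partition to the top as a horizontal rectangle---is never defined. You say the rectangle is ``swept upward past the $r-2$ squares \dots\ with the side lengths rebalanced as the rectangle crosses each square,'' but you give no rule for this rebalancing, no proof that the result is a valid partition, and no verification that the resulting dissection has the claimed shape. You yourself flag these as ``the hard part'' and ``where I expect the real work to concentrate,'' which is an accurate admission that the argument is not yet there. Propositions~\ref{prop:height_squares} and~\ref{prop:height_rectangles} constrain where Durfee and bottom blocks sit relative to one another inside a \emph{fixed} partition; they do not by themselves tell you how to move blocks between two different partitions, so invoking them does not fill the gap.

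The paper's proof avoids the sweep entirely by a change of viewpoint you may find instructive. Rather than transport a rectangle across the diagram, it first shows (Lemma~\ref{lem:A''D}) that $\mathcal{D}_{r,r-1}(n)$ is \emph{equal as a set} to a certain explicit subset $\mathcal{A}''_r(n)\subseteq\mathcal{A}_{r,r}(n)$: namely $\mathcal{A}_{r-1,r-1}(n)$ together with those partitions having exactly $r-1$ Durfee squares for which at least one square has a box immediately to the right of its bottom-right corner. This step is pure dissection bookkeeping and requires no bijection beyond the identity. The genuine bijection (Lemma~\ref{T}) is then a single \emph{local} move from $\mathcal{A}''_r(n)$ to $\mathcal{A}_{r,r-1}(n)$: locate the largest index $M_\lambda$ whose Durfee square has such a box, take that square together with the column to its right (a horizontal $n_{M_\lambda}\times(n_{M_\lambda}+1)$ block), and rotate it $90^\circ$ to a vertical $(n_{M_\lambda}+1)\times n_{M_\lambda}$ block. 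Nothing else moves. The inverse is governed by the dual index $M'_\mu$. So the ``global redistribution'' you anticipated collapses to one rotation at a well-chosen position; the difficulty you identified is real, but it is resolved by recognising $\mathcal{D}_{r,r-1}$ inside $\mathcal{A}_{r,r}$ rather than by engineering a multi-step sweep.
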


Our strategy for proving Theorem \ref{bijection} is the following. First, in Lemma \ref{T}, we give a bijection $T$ between some set $\mathcal{A''}_r(n)$ (that we will soon define) and $\mathcal{A}_{r,r-1}(n)$. Then, we prove in Lemma \ref{lem:A''D} that the sets  $\mathcal{A''}_r(n)$ and $\mathcal{D}_{r,r-1}(n)$ are actually the same.

\medskip

Let us first introduce some notation.
Given a partition $\lambda $ with exactly $s$ successive Durfee squares of sizes $n_1\geq \cdots \geq n_s,$ we denote by $A_{\lambda}$ the set of all indices $1\leq j \leq s$ such that there is a box directly to the right of the bottom-right corner of the $j$-th Durfee square, i.e.
 $$A_\lambda=\{1\leq j \leq s | \ \lambda_{\sum_{k=1}^{j}n_k}> n_j\}.$$
 If $A_\lambda \neq \emptyset$ we define $M_\lambda:=\max\{1\leq j \leq s \ | \ j\in A_{\lambda}\}$ and $m_\lambda:=\min\{1\leq j \leq s \ | \ j\in A_{\lambda}\}.$
 
 On the example of Figure \ref{fig:A'}, we have $A_{\lambda} = \{1, 3\}$, $M_{\Lambda}=3$, and $m_{\lambda}=1$.
 
\begin{figure}[H]
\includegraphics[width=0.4\textwidth]{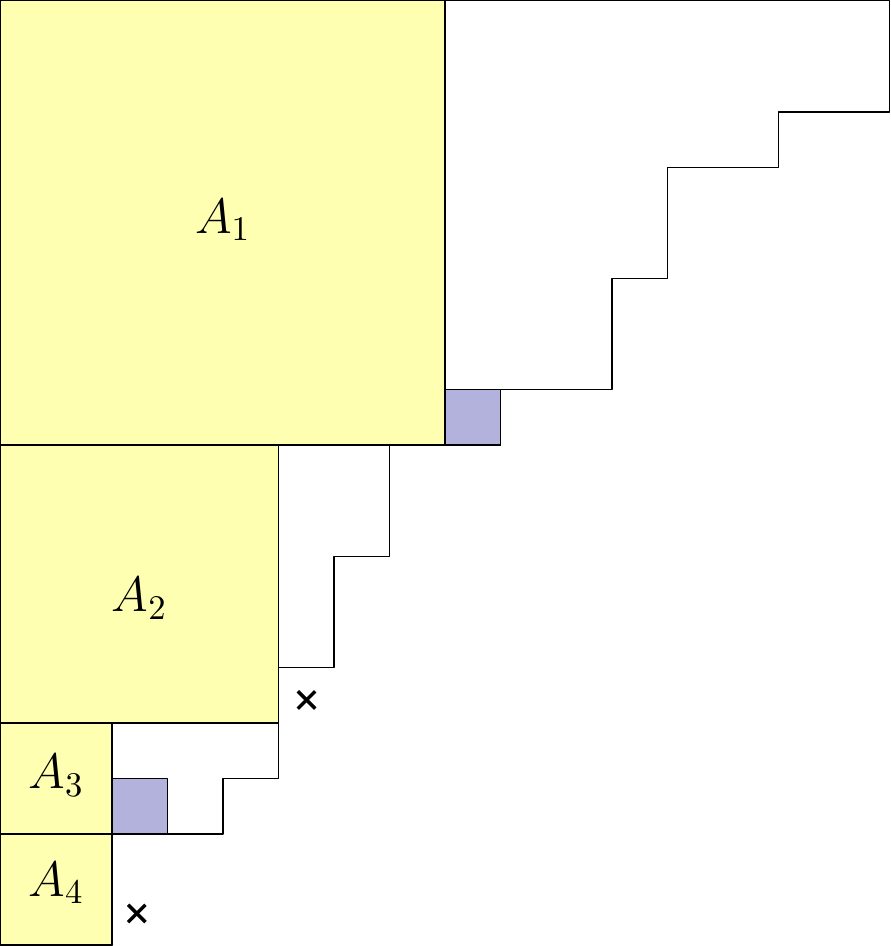}
\caption{A partition in $\mathcal{A'}_{5}(n)$}
\label{fig:A'}
\end{figure}

  Let us denote by $\mathcal{A'}_r(n)$  the set of all partitions $\lambda \in \mathcal{A}_{r,r}(n)$ with exactly $r-1$ Durfee squares and such that $A_\lambda \neq \emptyset.$ We define :
\begin{equation}
\label{eq:defA''}
\mathcal{A''}_r(n):= \mathcal{A}_{r-1,r-1}(n) \sqcup \mathcal{A'}_r(n). 
\end{equation}
Note that $\mathcal{A''}_r(n)$ is  a subset of $\mathcal{A}_{r,r}(n).$ 
   
\medskip

On the other hand, we have
\begin{equation}
\label{eq:defB}
\mathcal{A}_{r,r-1}(n)=\mathcal{A}_{r-1,r-1}(n) \sqcup \mathcal{F}_{r-1}(n),
\end{equation}
where $\mathcal{F}_{r-1}(n)$ is the set of partitions $\lambda \in \mathcal{A}_{r,r-1}(n)$ with exactly $r-2$ Durfee squares and a vertical Durfee rectangle.

Now, given a  partition $\mu \in \mathcal{F}_{r-1}(n)$ with exactly $r-2$ Durfee squares of sizes $n_1\geq \cdots \geq n_{r-2}$ and a vertical Durfee rectangle of size $(n_{r-1}+1) \times n_{r-1}$, we define $F_\mu$ to be the set of indices $1\leq j \leq r-2$  such that the part just below the $j$-th Durfee square is strictly smaller than this square, i.e.
  $$F_\mu=\{1\leq j \leq r-2| \ \mu_{\sum_{k=1}^{j} n_k+1}<n_j\}.$$
 If $F_\mu \neq \emptyset$, we define $M'_\mu:= \max\{1\leq j \leq r-2 \ | \ j\in F_{\mu}\}$. 

\medskip
By \eqref{eq:defA''} and \eqref{eq:defB}, to define a bijection between $\mathcal{A''}_r(n)$ and $\mathcal{A}_{r,r-1}(n)$, it suffices to define a bijection between $\mathcal{A'}_r(n)$ and $\mathcal{F}_{r-1}(n)$.
Thus we define a transformation $T:\mathcal{A''}_r(n) \longrightarrow \mathcal{A}_{r,r-1}(n)$ as follows.

Let $\lambda \in \mathcal{A''}_r(n).$ If $\lambda \in \mathcal{A}_{r-1,r-1}(n),$ then $T$ leaves $\lambda$ unchanged, $T(\lambda):=\lambda$. Otherwise, if $\lambda \in \mathcal{A'}_r(n)$, we know that $\lambda$ has exactly $r-1$ Durfee squares of sizes $n_1\geq \cdots \geq n_{r-1}$ and $A_\lambda \neq \emptyset$. Note that the $M_\lambda$-th square of $\lambda$ and the column to its right form a horizontal rectangle of height $n_{M_\lambda}$. To obtain $T(\lambda)$, we rotate this rectangle by $90$ degrees and we obtain a partition with $r-2$ successive Durfee squares of sizes $n_1\geq \cdots \geq n_{r-2}$ and a vertical Durfee rectangle of size $(n_{r-1}+1)\times n_{r-1}$, and no part after. Therefore $T(\lambda)$ belongs to $\mathcal{F}_{r-1}(n)$.
 In other words:
 
  $$T(\lambda)=(\lambda_1,\cdots ,\lambda_{\sum_{k=1}^{M_\lambda-1}n_k}, \lambda_{\sum_{k=1}^{M_\lambda-1}n_k+1}-1,\cdots,\lambda_{\sum_{k=1}^{M_\lambda}n_k}-1,n_{M_\lambda},\lambda_{\sum_{k=1}^{M_\lambda}n_k+1},\cdots ,\lambda_{\sum_{k=1}^{r-1}n_k}) \in \mathcal{A}_{r,r-1}(n).$$
  
  \begin{figure}[H]
\includegraphics[width=0.7\textwidth]{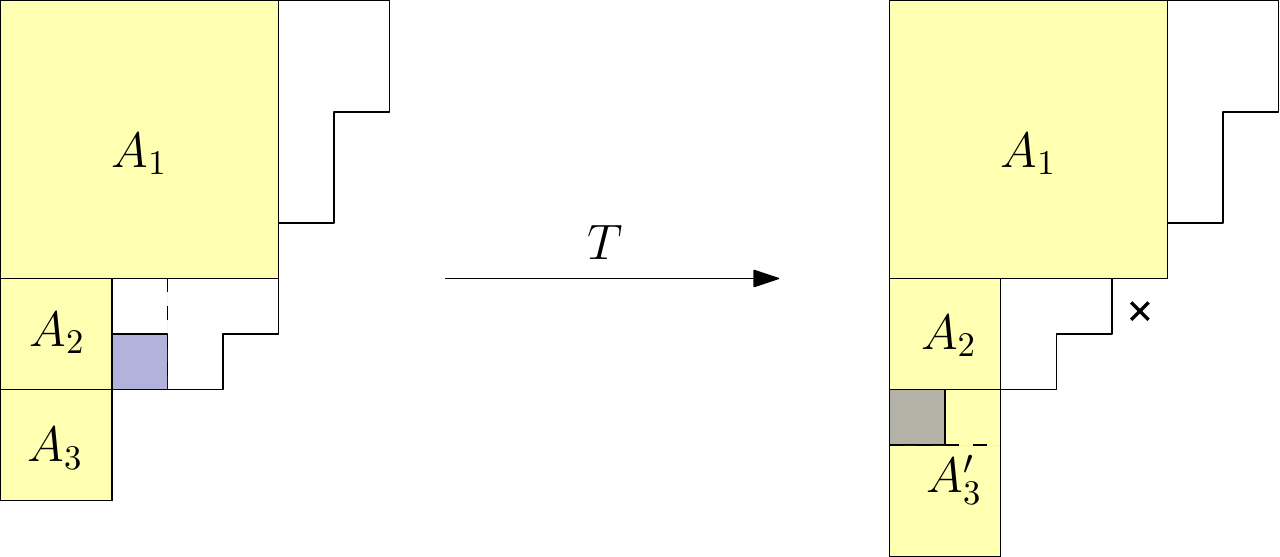}
\caption{Example of the transformation $T$ between partitions in $\mathcal{A'}_{4}(n)$ and $\mathcal{F}_{3}(n)$}
\label{fig:bijA'B}
\end{figure}
  
  We have the following result.
\begin{Lemma}\label{T} For every integer $r\geq 2$, the transformation $T$ described above is a bijection between $\mathcal{A''}_r(n)$ and $\mathcal{A}_{r,r-1}(n)$.
\end{Lemma}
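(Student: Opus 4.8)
The plan is to show that $T$ is a well-defined bijection by constructing its inverse explicitly. Since we have the disjoint union decompositions $\mathcal{A''}_r(n)=\mathcal{A}_{r-1,r-1}(n)\sqcup\mathcal{A'}_r(n)$ and $\mathcal{A}_{r,r-1}(n)=\mathcal{A}_{r-1,r-1}(n)\sqcup\mathcal{F}_{r-1}(n)$, and since $T$ acts as the identity on the common piece $\mathcal{A}_{r-1,r-1}(n)$, it suffices to prove that the restriction of $T$ maps $\mathcal{A'}_r(n)$ bijectively onto $\mathcal{F}_{r-1}(n)$. First I would verify that $T$ is well-defined on $\mathcal{A'}_r(n)$, i.e. that for $\lambda\in\mathcal{A'}_r(n)$ the partition $T(\lambda)$ really lies in $\mathcal{F}_{r-1}(n)$: one must check that rotating the horizontal rectangle formed by the $M_\lambda$-th Durfee square together with the column of boxes to its right (which has height $n_{M_\lambda}$) produces a genuine partition whose $(r-2)$-Durfee dissection consists of $r-2$ squares of the same sizes $n_1\geq\cdots\geq n_{r-2}$ followed by a vertical Durfee rectangle of size $(n_{r-1}+1)\times n_{r-1}$, with nothing below it. This is clearly weight-preserving since rotation does not change the number of boxes.

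Next I would construct the inverse map $T^{-1}:\mathcal{F}_{r-1}(n)\to\mathcal{A'}_r(n)$. Given $\mu\in\mathcal{F}_{r-1}(n)$ with $r-2$ Durfee squares of sizes $n_1\geq\cdots\geq n_{r-2}$ and a vertical Durfee rectangle of size $(n_{r-1}+1)\times n_{r-1}$, the index set $F_\mu$ records where a strict drop occurs below a Durfee square, and $M'_\mu=\max F_\mu$ identifies the correct square at which to act. The inverse operation takes the vertical Durfee rectangle, rotates it back by $90$ degrees into a horizontal rectangle of height $n_{r-1}$, and reinserts it as an extra column attached to the right of the $M'_\mu$-th Durfee square, thereby creating a partition with $r-1$ Durfee squares and a nonempty $A_\lambda$. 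The crux is to show that $M_\lambda$ for the output equals $M'_\mu$ (so that the two operations are genuinely inverse), which amounts to checking that the rotation/reinsertion performed at index $M'_\mu$ recreates exactly the box to the right of the bottom-right corner of the $M'_\mu$-th square, and that no other index larger than $M'_\mu$ acquires such a box.

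The main obstacle I expect is bookkeeping at the transition index. One must argue carefully that $M_\lambda$ and $M'_\mu$ correspond under the two constructions: in the forward direction $T$ removes the rightmost ``overhang'' column at the \emph{largest} index $M_\lambda$ where an overhang exists, and in the backward direction $T^{-1}$ reattaches a column precisely at $M'_\mu$, the \emph{largest} index exhibiting a strict drop below a square. The subtle point is that removing the overhang at $M_\lambda$ in $T(\lambda)$ creates exactly a strict drop below the $M_\lambda$-th square (so $M'_{T(\lambda)}=M_\lambda$), and conversely filling the drop at $M'_\mu$ creates exactly an overhang at $M'_\mu$ which is the maximal such index (so $M_{T^{-1}(\mu)}=M'_\mu$). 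Verifying these maximality statements requires a short case analysis ensuring that the local modification does not disturb the square structure at other indices and does not spuriously create or destroy overhangs/drops elsewhere. Once $T\circ T^{-1}=\mathrm{id}$ and $T^{-1}\circ T=\mathrm{id}$ are established on $\mathcal{A'}_r(n)$ and $\mathcal{F}_{r-1}(n)$ respectively, combining with the identity on $\mathcal{A}_{r-1,r-1}(n)$ yields the claimed bijection.
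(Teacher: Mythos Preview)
Your overall strategy—restrict to $\mathcal{A'}_r(n)\to\mathcal{F}_{r-1}(n)$ and build an explicit inverse—matches the paper's, but the details of your inverse are off by one, and this is not just a notational slip: it makes the proposed inverse fail.

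Concretely, you claim that removing the overhang column at the $M_\lambda$-th square and inserting it as a new row below creates a strict drop \emph{below the $M_\lambda$-th square}, so that $M'_{T(\lambda)}=M_\lambda$. It does not. The new row inserted just below the $M_\lambda$-th square has length exactly $n_{M_\lambda}$, so the part immediately below the $M_\lambda$-th square in $T(\lambda)$ equals the square size and $M_\lambda\notin F_{T(\lambda)}$. The strict drop is created one level \emph{higher}: the first part of the (former) $M_\lambda$-th block was $\leq n_{M_\lambda-1}$ and has now been decremented by $1$, so it is strictly less than $n_{M_\lambda-1}$, giving $M_\lambda-1\in F_{T(\lambda)}$. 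Moreover, for all $j>M_\lambda$ one checks (using $j\notin A_\lambda$) that the part below the $j$-th square of $T(\lambda)$ equals $n_j$, so $M'_{T(\lambda)}=M_\lambda-1$, with the boundary case $M_\lambda=1$ giving $F_{T(\lambda)}=\emptyset$.

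Consequently your description of $T^{-1}$ is also wrong. The inverse does not ``rotate the bottom vertical rectangle and reattach it at the $M'_\mu$-th square.'' It is the local inverse of the column-to-row move one index down: given $\mu\in\mathcal{F}_{r-1}(n)$, one removes the row immediately \emph{below} the $(M'_\mu+1)$-th square (this row has length $n_{M'_\mu+1}$) and adds it back as a column of height $n_{M'_\mu+1}$ to the right of that same $(M'_\mu+1)$-th square, producing $M_{T^{-1}(\mu)}=M'_\mu+1$. When $F_\mu=\emptyset$ the operation is performed at index $1$. The case analysis you anticipated is needed precisely to verify these index correspondences; with the correct shift $M'_{T(\lambda)}=M_\lambda-1$ the two constructions become mutual inverses.
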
 
 \begin{proof}

  Let $\lambda \in \mathcal{A'}_r(n).$ Note that by definition of the transformation $T$ we have:
\begin{itemize}
\item  If $M_\lambda=r-1$, then the part just below the $(r-2)$-th Durfee square of $T(\lambda)$ is strictly less than the size of this square, and so $M'_{T(\lambda)}=r-2.$

\item  If $2\leq M_\lambda \leq  r-2,$ then for all $M_\lambda\leq j \leq r-2$, the part right after the $j$-th square of $T(\lambda)$ is equal to the size of this square and the part right after its $(M_\lambda-1)$-th square is strictly less than the size of this square. This means that $M'_{T(\lambda)}=M_\lambda-1$. In the example of Figure \ref{fig:bijA'B}, we have $M_\lambda=2$ and $M'_{T(\lambda)}=1$.
\item If $M_\lambda =1,$ then for all $1\leq j \leq r-2$ the part right after the $j$-th square of $T(\lambda)$ is equal to the size of this square and so $F_{T(\lambda)}=\emptyset.$
\end{itemize}  
  Note also that this process is reversible, and we can define the reverse transformation  $T':\mathcal{A}_{r,r-1}(n) \longrightarrow \mathcal{A''}_{r}(n)$ of $T$ as follows.
  
If $\mu \in \mathcal{A}_{r-1,r-1}(n)$, we set $T'(\mu):=\mu\in \mathcal{A''}_r(n).$  For each partition $\mu \in \mathcal{F}_{r-1}(n)$, which has exactly $r-2$ Durfee squares of size $n_1\geq \cdots \geq n_{r-2}$ and a vertical Durfee rectangle of size $(n_{r-1}+1) \times n_{r-1}$, we do the following:

\begin{itemize}
\item  If $F_\mu \neq \emptyset$  and $ M'_{\mu}=r-2,$ then we send $\mu$ to a partition $\lambda$ by removing its last part and adding a box to each of its last remaining $n_{r-1}$ parts, i.e.
  $$T'(\mu)=\lambda=(\mu_1,\cdots,\mu_{\sum_{k=1}^{r-2}n_k},\mu_{\sum_{k=1}^{r-2}n_k+1}+1,\cdots,\mu_{\sum_{k=1}^{r-1}n_k}+1)\in \mathcal{A'}_r(n). $$
  Note that in this case $M_\lambda=r-1$.
  \item  If $F_\mu \neq \emptyset$  and $1\leq M'_{\mu}\leq r-3,$ then we send $\mu$ to a partition $\lambda$ by removing the row below the $(M'_\mu+1)$-th square and adding it as a column to the right of this square, i.e.
  $$T'(\mu)=\lambda=(\mu_1,\cdots,\mu_{\sum_{k=1}^{M'_\mu}n_k},\mu_{\sum_{k=1}^{M'_\mu}n_k+1}+1,\cdots,\mu_{\sum_{k=1}^{M'_\mu+1}n_k}+1,\mu_{\sum_{k=1}^{M'_\mu+1}n_k+2},\cdots ,\mu_{\sum_{k=1}^{r-1}n_k+1})\in \mathcal{A'}_r(n). $$  
  Note that in this case $M_\lambda=M'_\mu+1.$
  \item if $F_{\mu} = \emptyset$, then we transform $\mu$ into a partition $\lambda$ by removing the part $\mu_{n_1+1}=n_1$ and adding $n_1$ boxes to the first $n_1$ parts of $\mu.$, i.e.
  $$T'(\mu)=\lambda=(\mu_1+1,\cdots \mu_{n_1}+1,\mu_{n_1+2},\cdots \mu_{\sum_{k=1}^{r-1}n_k+1} )\in \mathcal{A'}_r(n).$$
  Note that in this case $M_\lambda=1.$
\end{itemize}
 So by the definition of the transformations $T$ and $T'$, for each partition $\lambda \in \mathcal{A''}_r(n)$ and each partition $\mu \in \mathcal{A}_{r,r-1}(n)$, we have
 $T(T'(\mu))=\mu$ and $T'(T(\lambda))=\lambda.$
  This proves that the transformation $T$ is a bijection between $\mathcal{A''}_r(n)$ and $\mathcal{A}_{r,r-1}(n)$ by giving a one to one correspondence between:
  \begin{itemize}
  \item the partitions of $\mathcal{A}_{r-1,r-1}(n)$ and themselves; 
  
  \item  the partitions $\lambda$ of $\mathcal{A'}_r(n)$ with $2\leq M_\lambda\leq r-1$ and the partitions $\mu$ of $\mathcal{F}_{r-1}(n)$ with $F_\mu \neq \emptyset$ and $M'_{\mu}=M_{\lambda}-1$;
  
 \item the partitions $\lambda$ of $\mathcal{A'}_r(n)$ with $M_\lambda=1$ and the partitions $\mu$ of $\mathcal{F}_{r-1}(n)$ with $F_\mu = \emptyset$. 
  \end{itemize}
   \end{proof}

Now the only thing left to do to prove Theorem \ref{bijection} is to show that $\mathcal{A''}_r(n) = \mathcal{D}_{r,r-1}(n)$.

 \begin{Lemma}\label{lem:A''D}
For all $r\geq 2 $, we have
$$\mathcal{A''}_r(n) = \mathcal{D}_{r,r-1}(n).$$
 \end{Lemma}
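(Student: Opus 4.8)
The plan is to read both sides off a single statistic of $\lambda$: its number $s$ of ordinary successive Durfee squares (top-left, squares only), of sizes $n_1\ge\cdots\ge n_s$, together with the set $A_\lambda\subseteq\{1,\dots,s\}$ already introduced. By definition $\mathcal{A}_{r-1,r-1}(n)=\mathcal{A}_{r-1,r-1}(n)$ consists of the partitions of $n$ with $s\le r-2$, while $\mathcal{A'}_r(n)$ consists of those with $s=r-1$ and $A_\lambda\neq\emptyset$; these are disjoint, so $\mathcal{A''}_r(n)$ is the set of partitions of $n$ with $s\le r-2$, together with those with $s=r-1$ and $A_\lambda\neq\emptyset$. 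On the other side, $\lambda\in\mathcal{D}_{r,r-1}$ exactly when its $1$-Durfee dissection (one horizontal rectangle $D'_1$, then squares) has at most $r-2$ nonempty squares below $D'_1$; write $c(\lambda)$ for that number. Thus everything reduces to computing $c(\lambda)$, and the claim I aim for is
\[
c(\lambda)=
\begin{cases}
s-1 & \text{if } A_\lambda\neq\emptyset,\\
s & \text{if } A_\lambda=\emptyset.
\end{cases}
\]
Granting this, a short case check (on $s\le r-2$, $s=r-1$, $s\ge r$, each split according to whether $A_\lambda$ is empty) shows that $c(\lambda)\le r-2$ holds exactly for the two families describing $\mathcal{A''}_r(n)$, giving $\mathcal{D}_{r,r-1}(n)=\mathcal{A''}_r(n)$.

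To establish the formula for $c(\lambda)$, I would first compare the horizontal Durfee rectangle $D'_1$ with the first ordinary Durfee square $A_1$. Since the larger side $d_1$ of $D'_1$ equals $1+\max\{m:\lambda_m\ge m+1\}$, one checks $d_1\in\{n_1,n_1+1\}$, with $d_1=n_1+1$ if and only if $\lambda_{n_1}\ge n_1+1$, that is if and only if $1\in A_\lambda$. In the first case (Case I, $1\in A_\lambda$) the rectangle $D'_1$ occupies the same $n_1$ rows as $A_1$ (just one extra column), so the sub-partition below $D'_1$ is exactly the one below $A_1$, with successive Durfee squares $A_2,\dots,A_s$; hence $c(\lambda)=s-1$. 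In the second case (Case II, $1\notin A_\lambda$, so $\lambda_{n_1}=n_1$) the rectangle has one fewer row than $A_1$, and the sub-partition below $D'_1$ is $\nu:=(n_1,\lambda_{n_1+1},\lambda_{n_1+2},\dots)$, obtained from the sub-partition $\rho$ below $A_1$ by prepending the diagonal part $n_1=\lambda_{n_1}$.

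The heart of the matter is then to count the Durfee squares of $\nu$ in Case II. A direct computation shows that the first Durfee square of $\nu$ has size $1+\max\{j:\rho_j\ge j+1\}$, and that removing it reproduces the very same ``prepend a diagonal part'' situation one level down: if $2\in A_\lambda$ the part below this first square is exactly the part below $A_2$, so $c(\lambda)=1+(s-2)=s-1$, whereas if $2\notin A_\lambda$ one is reduced to $(n_2,\lambda_{n_1+n_2+1},\dots)$, i.e.\ to $\rho$ with its leading square replaced in the same way. Iterating, the process walks down the squares $A_2,A_3,\dots$, contributing one square at each index not in $A_\lambda$ and stopping at the least index of $A_\lambda$: when $A_\lambda\neq\emptyset$ it terminates with $c(\lambda)=s-1$, and when $A_\lambda=\emptyset$ it runs to the bottom and yields $c(\lambda)=s$. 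This recursive bookkeeping in Case II—checking that each step genuinely reproduces the prepended-diagonal-part configuration and tracking the count correctly—is the only delicate point; I expect to organize it either as an induction on $s$ or, in the spirit of Section~\ref{sec:combi_Durf_Bot}, as a height computation analogous to Propositions~\ref{prop:height_squares} and~\ref{prop:height_rectangles}. Once the formula for $c(\lambda)$ is in hand, the set equality $\mathcal{A''}_r(n)=\mathcal{D}_{r,r-1}(n)$ is immediate from the case check above.
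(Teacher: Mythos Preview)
Your approach is correct and essentially the same as the paper's. Both arguments determine the $1$-Durfee dissection of $\lambda$ from its ordinary Durfee-square dissection via the set $A_\lambda$: your Case~I is the paper's case $m_\lambda=1$, and your recursive Case~II unwinds exactly to the paper's direct description of the square sizes when $m_\lambda\ge 2$ (rectangle of height $n_1-1$, then squares of sizes $n_2,\dots,n_{m_\lambda-1},\,n_{m_\lambda}+1,\,n_{m_\lambda+1},\dots,n_s$) and when $A_\lambda=\emptyset$ (rectangle of height $n_1-1$, then squares $n_2,\dots,n_s,1$). The only cosmetic difference is framing: the paper takes complements inside $\mathcal{A}_{r,r}(n)=\mathcal{D}_{r,r}(n)$ and characterizes $\mathcal{D}_{r,r}(n)\setminus\mathcal{D}_{r,r-1}(n)$, whereas you compute $c(\lambda)$ globally and then do the case check on $s$; your formula $c(\lambda)=s-\mathbf{1}_{A_\lambda\neq\emptyset}$ is a tidy summary of the paper's case analysis.
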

  
\begin{proof}
To prove this, we will show that 
\begin{equation}\label{r-1}
\mathcal{A}_{r,r}(n)\setminus \mathcal{A''}_r(n)=\mathcal{D}_{r,r}(n) \setminus \mathcal{D}_{r,r-1}(n).
 \end{equation}
 Since we know that $\mathcal{A}_{r,r}(n)$ and $\mathcal{D}_{r,r}(n)$ are both equal to the set of partitions of $n$ with at most $r-1$ consecutive Durfee squares, and since $\mathcal{D}_{r,r-1}(n) \subset \mathcal{D}_{r,r}(n)$ and $\mathcal{A''}_r(n) \subset \mathcal{A}_{r,r}(n)$, Equation (\ref{r-1}) would prove the equality between $\mathcal{A''}_r(n)$ and $\mathcal{D}_{r,r-1}(n).$ Then by Lemma \ref{T} the transformation $T$ would define a bijection between $\mathcal{A}_{r,r-1}(n)$ and  $\mathcal{D}_{r,r-1}(n)$ as stated.
 
 We now prove~\eqref{r-1}. To do so, let $\lambda$ be a partition in $ \mathcal{A}_{r,r}(n).$ It has exactly $s$ successive Durfee squares, with $1\leq s \leq r-1$. By the definition of $\mathcal{A''}_r(n)$, the left-hand side of \eqref{r-1} is the set of partitions $\lambda$ of $n$ with exactly $r-1$ successive Durfee squares such that the last row of each Durfee square is a part of $\lambda$. i.e., $\mathcal{A}_{r,r}(n)\setminus \mathcal{A''}_r(n)$ is equal to the following set of partitions:
 
 $$\{\lambda \in \mathcal{A}_{r,r}(n) | \lambda \ \text{has exactly} \ r-1 \ \text{Durfee squares and } A_{\lambda}=\emptyset\}.$$
 
 Let us now describe the right-hand side of \eqref{r-1}. Recall that $\mathcal{D}_{r,r-1}(n)$ is the set of partitions of $n$ with at most one horizontal Durfee rectangle and $r-2$ Durfee squares. Let $\lambda$ be a partition in $\mathcal{D}_{r,r}(n)=\mathcal{A}_{r,r}(n)$, i.e. a partition of $n$ with exactly $s$ successive Durfee squares (of sizes $d_1\geq \cdots \geq d_s$) with $1\leq s \leq r-1.$ 
\begin{itemize}
\item If $A_\lambda \neq \emptyset:$ 
\\
\begin{itemize}
\item If $m_\lambda=1,$ then $\lambda$ has one horizontal Durfee rectangle of height $d_1$ and $s-1$ successive Durfee squares of sizes $d_2\geq \cdots \geq d_{s}$. So it belongs to $\mathcal{D}_{r,r-1}(n)$.

\item If $m_\lambda \geq 2,$ then to determine the first horizontal Durfee rectangle of $\lambda$, we take the first $d_1-1$ rows of the first Durfee square. Then the following $m_\lambda-2$ Durfee squares will have the same size as before, but start one row above. Next, we extend its $m_\lambda$-th square from top and right to obtain a Durfee square of size $d_{m_\lambda}+1.$ Now $\lambda$ has one horizontal Durfee rectangle of height $d_1-1$ and $s-1$ successive Durfee squares of sizes $d_2\geq \cdots \geq d_{{m_\lambda}-1} \geq d_{m_\lambda}+1\geq d_{m_{\lambda}+1}\geq \cdots \geq d_s$, and no part after this last square. So $\lambda$ belongs to $\mathcal{D}_{r,r-1}(n),$ for all $1\leq s \leq r-1.$ 
Figure \ref{fig:bij2} shows an example where $m_{\lambda}=2$.
\begin{figure}[H]
\includegraphics[width=0.65\textwidth]{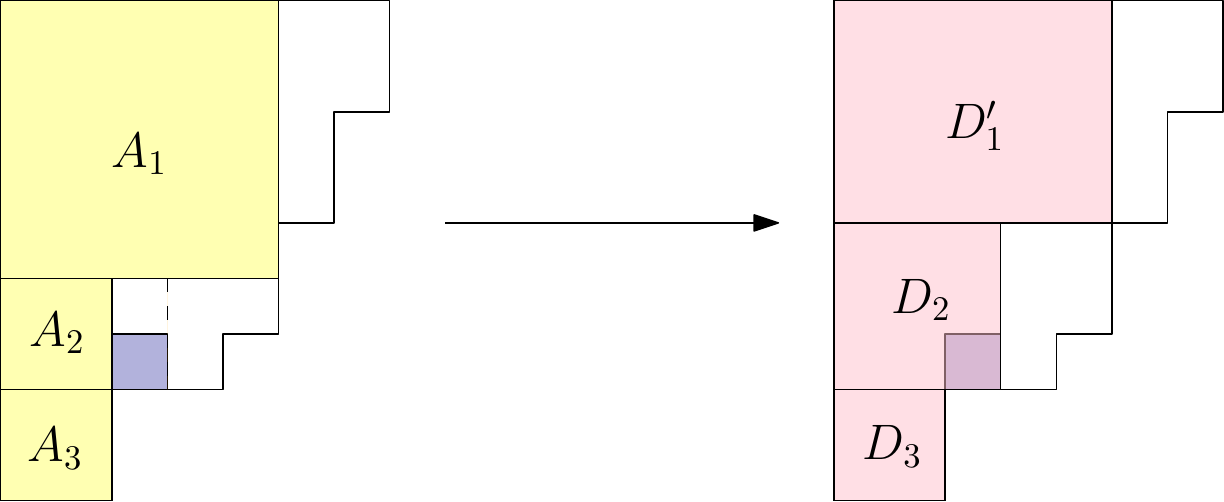}
\caption{Durfee squares and rectangles of a partition $\lambda$ with $m_{\lambda}=2$}
\label{fig:bij2}
\end{figure}

\end{itemize}

\item If $A_\lambda= \emptyset$, then to determine the first horizontal Durfee rectangle of $\lambda$, we take again the first $d_1-1$ rows of the first Durfee square. Then all the following Durfee squares have the same size but start one row above, and at the end we are left with a Durfee square of size $1$ in the last row. Now $\lambda$ has one horizontal Durfee rectangle of height $d_1-1$ and $s$ successive Durfee squares of sizes $d_2\geq \cdots \geq d_s \geq 1$. So, if $1\leq s \leq r-2$, then $\lambda$ belongs to $\mathcal{D}_{r,r-1}(n).$ This conclusion does not hold if $s=r-1$.
\end{itemize}

So $\mathcal{D}_{r,r}(n) \setminus \mathcal{D}_{r,r-1}(n)$ is also equal to the set of partitions of $n$ with exactly $r-1$ successive Durfee squares and no part after its last square such that the last row of each square is a part of the partition. This proves~\eqref{r-1}.
\end{proof}

\section{Remarks and open problems}\label{sec:final}

Our study raises a few open questions, which we list in this final section. 

First, it should be possible to show by using $q$-hypergeometric methods (or bijectively) that the sums in~\eqref{AGP} and~\eqref{eq:AGri} are the same (after changing $i$ to $r-i$ in one of them), without appealing to the product sides. This is trivially true when $i=0$ in~\eqref{AGP} (and $i=r$ in~\eqref{eq:AGri}) as both sums are the same, and easy when $i=r-1$ in~\eqref{AGP} (and $i=1$ in~\eqref{eq:AGri}) by shifting the integers $s_j$ to $s_j+1$ for all $j$ in~\eqref{AGP}. Similarly, as already mentioned in Section~\ref{sec:outline}, it could be interesting to prove analytically that~\eqref{eq:B_qserie} and~\eqref{eq:D_qserie} are equal.

Next, as discussed in Section~\ref{sec:outline}, it seems to be a challenging problem to find a bijection showing that $\mathcal{A}_{r,i} =
\mathcal{D}_{r,i}$ for all $i$ such that $1\leq i\leq r$, as we only managed to do this in the cases $i=1$, $i=r$, and $i=r-1$.

Finally, recall that in~\cite{B}, Bressoud found the counterpart for even moduli to the Andrews--Gordon identities~\eqref{eq:AGri} (replace $i$ by $r-i$ to get the form corresponding to~\eqref{eq:AGri}):
\begin{equation}\label{B}
\sum_{s_1\geq\dots\geq s_{r-1}\geq0}\frac{q^{s_1^2+\dots+s_{r-1}^2+s_{r-i}+\dots+s_{r-1}}}{(q)_{s_1-s_2}\dots(q)_{s_{r-2}-s_{r-1}}(q^2;q^2)_{s_{r-1}}}=\frac{(q^{2r},q^{r-i},q^{r+i};q^{2r})_\infty}{(q)_\infty},
\end{equation}
where $r>0$ and $0\leq i\leq r-1$ are fixed integers. Moreover there is another result, also due to Bressoud (see~\cite[(3.5)]{Br80}), which plays the role of~\eqref{Br3.3} for~\eqref{B}:
\begin{equation}\label{Br3.5}
\sum_{s_1\geq\dots\geq s_{r-1}\geq0}\frac{q^{s_1^2+\dots+s_{r-1}^2-s_1-\dots-s_i}}{(q)_{s_1-s_2}\dots(q)_{s_{r-2}-s_{r-1}}(q^2;q^2)_{s_{r-1}}}=\sum_{k=0}^{i}\frac{(q^{2r},q^{r-i+2k},q^{r+i-2k};q^{2r})_\infty}{(q)_\infty}.
\end{equation}
If, omitting the dependence on $r$, we denote by $\tilde{S}_i(q)$ the left-hand side of~\eqref{Br3.5}, then $\tilde{S}_0(q)$ gives the $i=0$ instance of~\eqref{B}, $\tilde{S}_1(q)$ yields the formula
$$\sum_{s_1\geq\dots\geq s_{r-1}\geq0}\frac{q^{s_1^2+\dots+s_{r-1}^2-s_1}}{(q)_{s_1-s_2}\dots(q)_{s_{r-2}-s_{r-1}}(q^2;q^2)_{s_{r-1}}}=2\frac{(q^{2r},q^{r-1},q^{r+1};q^{2r})_\infty}{(q)_\infty},$$
while for $2\leq i\leq r-1$, we have:
$$
\tilde{S}_i(q)-\tilde{S}_{i-2}(q)=\sum_{k=0}^{i}\frac{(q^{2r},q^{r-i+2k},q^{r+i-2k};q^{2r})_\infty}{(q)_\infty}-\sum_{k=1}^{i-1}\frac{(q^{2r},q^{r-i+2k},q^{r+i-2k};q^{2r})_\infty}{(q)_\infty}.$$
Telescoping the right-hand side of this formula and noting that the infinite products are the same for $k=0$ and $k=i$ in the above sum, we get a formula which could be seen as an even moduli counterpart of~\eqref{AGP}:
\begin{equation}\label{AGPB}
\sum_{s_1\geq\dots\geq s_{r-1}\geq0}\frac{q^{s_1^2+\dots+s_{r-1}^2-s_1-\cdots- s_i}(1-q^{s_i+s_{i-1}})}{(q)_{s_1-s_2}\dots(q)_{s_{r-2}-s_{r-1}}(q^2;q^2)_{s_{r-1}}}=2\frac{(q^{2r},q^{r-i},q^{r+i};q^{2r})_\infty}{(q)_\infty}.
\end{equation}

Therefore we are left with a natural question, namely: could the sum side of~\eqref{B} (resp.~\eqref{AGPB}) come from the computation of 
the Hilbert--Poincar\'e series of $\mathcal{R}/J'_{r,i}$ where $J'_{r,i}$ is the leading ideal of some differential ideal $\mathcal{J}'_{r,i}$ with respect to the weighted reverse lexicographical (resp. weighted lexicographical) order?

\section*{Acknowledgements}
The second and third authors are partially funded by the ANR COMBIN\'e
ANR-19-CE48-0011. The second author is funded by the SNSF Eccellenza grant number PCEFP2 202784.
We thank Jeremy Lovejoy and Ole Warnaar for their very useful comments on an earlier version of this paper. We are also grateful to the anonymous referees for their valuable suggestions which helped improve the paper.


\end{document}